\newcommand{\R}[1]{{\mathbb{R}^{#1}}}
\newcommand{\sm}[1]{{\mathcal{S}^{#1}}}
\newcommand{\cp}{{C^o}}
\newcommand{\RR}{\mathbb{R}}
\newcommand{\rccp}{{0^+C^o}}
\newcommand{\CC}{{0^+C^o}}
\newcommand{\pointed}{C_0}
\newcommand{\rc}{{0^+\pointed}}
\newcommand{\ks}{{K^*}}
\newcommand{\svpc}{{S_{C}}}
\newcommand{\svpcone}{{S_{C}^1}}
\newcommand{\svpctwo}{{S_{C}^2}}
\newcommand{\svpcthree}{{S_{C}^3}}
\newcommand{\svpci}{{S_{c}^i}}
\newcommand{\svprci}{{S_{0^+C}^i}}
\newcommand{\rpc}{{0^+C}}
\newcommand{\lin}{L_0}
\newcommand{\svcone}{{S_{C_0}^1}}
\newcommand{\svctwo}{{S_{C_0}^2}}
\newcommand{\svcthree}{{S_{C_0}^3}}
\newcommand{\svrci}{{S_{0^+\pointed}^i}}
\newcommand{\svci}{{S_{C_0}^i}}
\newcommand{\svc}{{S_{C_0}}}
\newcommand{\svl}{{S_{\linc}}}
\newcommand{\linc}{{L_1}}
\newcommand{\linco}{{L_1^\bot}}
\newcommand{\lincc}{{L_2}}
\newcommand{\lincco}{{L_2^\bot}}
\newcommand{\cl}[1]{\mathrm{cl}\left(#1\right)}
\newcommand{\ri}[1]{\mathrm{ri}\left(#1\right)}
\newcommand{\inte}[1]{\mathrm{int}\left(#1\right)}
\newcommand{\co}[1]{{\mathrm{co}}\left(#1\right)}
\newcommand{\cone}[1]{{\mathrm{cone}}\left(#1\right)}
\newcommand{\support}[2]{\delta^*\left(#1,#2\right)}
\newcommand{\indicator}[2]{\delta \left(#1,#2\right)}
\newcommand{\domsupport}[2]{\mathrm{dom}\support{#1}{#2}}
\newcommand{\extc}[1]{\mathrm{ext}_1(#1)}
\newcommand{\extrc}[1]{\mathrm{ext}_2(#1)}
\newcommand{\aff}[1]{\mathrm{aff}(#1)}
\newcommand{\pc}{C^{\text{\upshape o}}}
\newcommand{\dpc}{C^{\text{\upshape oo}}}
\newcommand{\rec}{0^+C}
\newcommand{\pk}{K^{\text{\upshape o}}}
\newcommand{\CA}{C^{\text{\upshape o}}}
\newtheorem{theorem}{Theorem}[section]
\newtheorem{lemma}[theorem]{Lemma}
\newtheorem{corollary}[theorem]{Corollary}
\theoremstyle{definition}
\newtheorem{example}[theorem]{Example}
\newtheorem{remark}[theorem]{Remark}
\newtheorem{define}[theorem]{Definition}
\numberwithin{equation}{section}
\begin{document}

\title[Lifts of Non-compact Convex Sets and Cone Factorizations]
{Lifts of Non-compact Convex Sets and Cone Factorizations }

\author{Chu Wang}
\address{Key Laboratory of Mathematics Mechanization, Academy of Mathematics and Systems Science, CAS,
Beijing, 100190, China}
\email{cwang@mmrc.iss.ac.cn}
\author{Lihong Zhi}
\address{Key Laboratory of Mathematics Mechanization, Academy of Mathematics and Systems Science, CAS,
Beijing, 100190, China}
\email{lzhi@mmrc.iss.ac.cn}

\begin{abstract}

In this paper we   generalize the factorization theorem of Gouveia, Parrilo and Thomas 
 to a broader class of convex sets.  Given a general convex set, we define  a slack operator associated to the set and its polar according to whether  the convex set is full dimensional, whether it is a translated cone and whether it contains lines. We strengthen the condition of a cone lift by requiring not only the convex set is the image of an affine slice of a given closed convex cone, but also its recession cone is the image of the linear slice of the closed convex cone. We show that the generalized lift of a convex set  can also be characterized by the cone factorization of a properly defined slack operator.
\end{abstract}
\date{\today}
\maketitle

\noindent {\bf Key words:} lift; convex set; recession cone; polyhedron; cone factorization; nonnegative rank; positive semidefinite rank.

\section{Introduction}
 Given a linear programming problem, how to reformulate it to a  standard form with fewer constraints is an important problem. In \cite{Yannakakis1991},  Yannakakis proved that the nonnegative rank of a slack matrix of a polytope $P$ is the minimum $k$ such that $P$ is the linear image of an affine slice of the nonnegative quadrant.  In \cite{Fiorini2012, Gouveia2013}, Yannakakis's result was generalized to decide
 whether  a convex body $C$ (a compact convex set containing the origin in its interior) is the linear image of an affine slice of a given convex cone $K$ ($K$-lift) via cone factorizations of slack operators. Although it was  claimed that  results in   \cite{Gouveia2013} hold for all convex sets,  we notice that it is more complicated to identify whether a non-compact convex set   $C$ containing no lines has a $K$-lift since $C$ could be generated by not only  extreme points but also extreme directions. Moreover, if a convex set contains lines, then it has no extreme points or extreme directions. Furthermore, if the convex set  does not  contain the origin  in its interior,  linear functions corresponding to its polar can not characterize the convex set completely (see Example \ref{nonequal}).  These facts  motivate us to study how to extend the definitions of $K$-lift and slack operator to a general convex set and show the relationship between  lifts of convex sets and cone factorizations of  slack operators when the convex set is not a convex body.

 {\bf Our contribution:} Let $C$ be a closed convex set in $\R{n}$ and $K$ a closed convex cone in $\R{m}$. We consider how to  generalize the factorization theorem in   \cite{Gouveia2013}  to a broader class of convex sets. Our main results are as follows.
\begin{itemize}
\item When $C$  contains the origin, since $C=C^{oo}$, $C$ can be described by all vectors in $\cp$. When $C$ does not contain the origin, we show that a convex set $C$ can be  characterized completely by linear functions defined by  $\cp=\{l\mid \langle l,x\rangle\leq 1,\ \forall x\in  C\}$, $\rccp=\{l\mid \langle l,x\rangle\leq 0,\ \forall x\in C \}$ and  $C_3=\{l\mid \langle l,x\rangle\leq -1,\ \forall x\in C \}$.
\item We extend the question of when a given convex body $C$ is the linear image of  an affine slice of a  convex cone   to the case where $C$ is not compact and  may  not contain the origin in its interior and may contain lines. We introduce two ways to characterize the existence of a $K$-lift of $C$. The first one is based on all  points in $C$ and the second one  uses only  extreme points, extreme directions and  an orthogonal basis of the linearlity space of  $C$ if it contains  lines.
    Although the first method can be used to check the existence of a $K$-lift of any convex set, it is difficult to use, see Remark \ref{introduction}. Therefore, in the paper,  we focus on the second method. We extend Definition 1,2 and Theorem 1 in \cite{Gouveia2013} to a broader class of convex set and show that the generalized lift of a convex set can also be characterized by the cone factorization of properly defined slack operator according to  whether the convex set  is full dimensional, whether it is a translated cone and whether it contains a line.
\item We specialize the results of the cone lift of general convex sets to  polyhedra   and  show that the conclusion can be strengthened when  $C$ and  $K$ are both polyhedra.  When $K$ is a semidefinite convex cone, we   give a lower bound on the semidefinte rank of a polyhedron, which generalizes the result in \cite{GRT2013}.  We also extend  results in \cite{ Gillis2012,Gouveia20132921, GRT2013} to  identify  whether a given nonnegative matrix is a slack matrix of a polyhedron and characterize the rank of a  slack matrix in terms of the dimension of  a polyhedron.
\end{itemize}

 The paper is organized as follows. In Section \ref{premli}, we provide some preliminaries about convex sets and cones.  Some well-known results in convex analysis are recalled. In Section \ref{noncompactlift}, we generalize the factorization theorem in \cite[Theorem 1]{Gouveia2013} to  convex sets which are not  convex bodies. In Section \ref{polyhedralift}, we specialize  results established in Section \ref{noncompactlift}  to the case where the convex set is a polyhedron. Some results in \cite{Gillis2012,Gouveia20132921,GRT2013} on the semidefinite rank of a slack matrix are extended to the case that the convex set is a polyhedron.

\section{Preliminary}\label{premli}

 Let $\R{n}$ be a n-dimensional linear space,  $\sm{m}$ the space of real symmetric $m\times m$ matrices.  A non-empty subset $C\subset\R{n}$ is said to be {\itshape convex} if
$(1-\lambda)x+\lambda y\in C$ whenever $x\in C$, $y\in C$ and $0<\lambda<1$.  We
denote $\cl{C}$ and $\inte{C}$ as the {\itshape closure} and {\itshape interior} of $C$ respectively. The
{\itshape affine
hull} of a convex set $C$, denoted by  $\aff{C}$, is the unique smallest affine set containing $C$. If a closed convex set $C$ is compact and contains the origin in its interior, it is called a {\itshape convex body}.

A subset $K$ of $\R{n}$ is called a {\it cone} if it is closed under nonnegative scalar multiplication, i.e. $\lambda x\in K$ when $x\in K$ and $\lambda\geq 0$.  We denote the $m$-dimensional nonnegative quadrant by $\RR_{+}^{m}$ and the cone of $m \times m$ real symmetric positive semidefinite (psd) matrices by $\mathcal{S}_{+}^m$.
A convex cone $K$ is {\itshape pointed} if it is closed and
$K\cap-K=\{0\}$.
The {\itshape polar}  
 of  a non-empty convex cone $K$ is defined as
\begin{equation*}
\pk=\{x\in\R{n}\mid \forall y\in K,\langle x,y\rangle\leq 0 \}.
\end{equation*}
Given a  set $C$, if there exists a cone $C_0$ and a vector $x\in \R{n}$ such that $C=x+C_0$, then $C$ is said to be a {\it translated cone}.

The
{\itshape recession cone} $\rec$ of a non-empty  convex set $C$ is the set including  all
vectors  $y$ satisfying  $x+\lambda y\in C$ for every $\lambda>0$ and
$x\in C$.  The set $\rec \cap (-\rec)$ is called the {\it lineality space} of $C$.

 Let $S_0$ be a set of points in $\R{n}$ and $S_1$ a set of directions in $\R{n}$. We define the {\it convex hull} $\co{S}$ of $S=S_0\cup S_1$ to be the smallest convex set $C$ in $\R{n}$ such that $C\supseteq S_0$ and $\rpc\supseteq S_1$. Algebraically, a vector $x$ belongs to $\co{S}$ if and only if it can be expressed in the form
 \[x=\lambda_1x_1+\cdots+\lambda_kx_k+\lambda_{k+1}x_{k+1}+\cdots+\lambda_{m}x_{m},\  \sum\limits_{i=1}^{k}\lambda_i=1,\]
 where $x_1,\ldots,x_k$ are vectors in $S_0$ and $x_{k+1},\ldots,x_{m}$ are vectors whose directions are in $S_1$ and $\lambda_i\geq 0$ for $1\leq i\leq m$. If $S_0=\{0\}$ and $S_1$ is not empty, then $C=\co{S_0\cup S_1}$ is a cone which is also denoted as $\cone{S_1}$.

  The {\it relative interior} $\ri{C}$ of a convex set $C$ in $\R{n}$ is defined as the interior  when $C$ is regarded as a subset of its affine hull $\aff{C}$. A {\it face} of a convex set $C$ is a convex subset $C'$ of $C$ such that every closed line segment in $C$ with a relative interior 
in $C'$ has both endpoints in $C'$. The zero-dimensional faces of $C$ are called the {\it extreme points} of $C$. If $C'$ is a half-line face of a convex set $C$, we shall call the direction of $C'$ an {\it extreme direction} of $C$. If $C$ is a convex cone, an {\it extreme ray} is a face  which is a half-line emanating from the origin. Note that every extreme direction of $C$ can also be regarded as an extreme ray  of $\rpc$. Let $x=(x_1,\ldots,x_n)$ and $y=(y_1,\ldots,y_n)$ be two vectors in $\R{n}$, the  {\it inner product} of   $x, y$ in $\R{n}$ is expressed by   $\langle x,y\rangle=\sum\limits_{i=1}^n x_i y_i$.

The {\itshape polar} of a non-empty convex set $C\subset\R{n}$ is a closed
convex set defined as
\begin{equation*}
\pc=\{x\in\R{n}\mid \forall y\in C,\langle x,y\rangle\leq 1 \}.
\end{equation*}
We have $\dpc=\cl{\co{C\cup\{0\}}}$.

The {\it indicator function} $\indicator{\cdot}{C}$ is defined by
\begin{equation*}
\indicator{x}{C}= \left\{ \begin{array}{lll} 0 &~{\rm if}~ &x \in C,\\
+\infty & ~{\rm if}~ &x \notin C.
\end{array} \right.
\end{equation*}
The {\it support function}
 $\support{x}{C}$ of a convex set $C \in \R{n}$ is defined by
 \[\support{x}{C}=\sup \{\langle x, y\rangle\mid y\in C\}.\]
$\domsupport{x}{C}=\{x\mid \support{x}{C}<+\infty\}$ is called the {\it  barrier cone} of $C$.

\begin{theorem}\cite[Theorem 8.3]{convexanalysis}\label{closedone}
Let $C$ be a non-empty closed convex set, and let $y\neq0$. If there exists even one $x$ such that the half-line $\{x+\lambda y\mid \lambda \geq 0\}$ is contained in $C$, then the same thing is true for every $x\in C$, i.e. one has $y\in \rpc$.
\end{theorem}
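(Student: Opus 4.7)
The plan is to fix an arbitrary point $x \in C$ and an arbitrary $\lambda > 0$, and to exhibit $x + \lambda y$ as the limit of a sequence of convex combinations drawn from $C$. Convexity of $C$ will produce the sequence, and closedness of $C$ will force the limit to remain in $C$. Doing this for arbitrary $x$ and $\lambda$ establishes that $y$ belongs to the recession cone of $C$.

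Concretely, let $x_0 \in C$ be the distinguished point for which $x_0 + \mu y \in C$ for all $\mu \geq 0$. For each integer $t \geq 1$, I would form the convex combination
$$p_t \;=\; \frac{t}{t+1}\, x \;+\; \frac{1}{t+1}\bigl(x_0 + (t+1)\lambda y\bigr).$$
Since $(t+1)\lambda \geq 0$, the hypothesis gives $x_0 + (t+1)\lambda y \in C$, and since $x \in C$, convexity yields $p_t \in C$ for every $t$. A short simplification rewrites
$$p_t \;=\; \frac{t}{t+1}\, x \;+\; \frac{1}{t+1}\, x_0 \;+\; \lambda y,$$
so that $p_t \to x + \lambda y$ as $t \to \infty$. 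Closedness of $C$ then forces $x + \lambda y \in C$. As $\lambda > 0$ was arbitrary, the whole half-line $\{x + \lambda y : \lambda \geq 0\}$ lies in $C$, and as $x$ was arbitrary this is exactly the defining property of $y \in 0^+C$.

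The only real ingenuity in the argument is the choice of the convex weights: I weight $x_0 + (t+1)\lambda y$ by $\tfrac{1}{t+1}$ so that the $y$-component of $p_t$ is exactly $\lambda$ for every $t$ (independent of $t$), while the $x_0$-contribution is squeezed to zero in the limit. Once this is set up, no step poses a genuine obstacle. It is worth noting, however, that closedness of $C$ is essential in the final passage to the limit; without it the limit point $x + \lambda y$ could escape $C$, and the statement of the theorem would fail.
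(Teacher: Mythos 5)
Your proof is correct. Note that the paper does not prove this statement at all --- it is quoted verbatim as Theorem 8.3 of Rockafellar's \emph{Convex Analysis} and used as a black box --- so there is no in-paper argument to compare against. Your direct construction, taking the convex combination $p_t = \tfrac{t}{t+1}x + \tfrac{1}{t+1}\bigl(x_0 + (t+1)\lambda y\bigr)$ so that the $y$-component is pinned at $\lambda$ while the $x_0$-contribution vanishes, is the standard elementary proof and is fully rigorous; Rockafellar's own route instead passes through his Theorem 8.2 (characterizing $0^+C$ of a closed convex set via limits of $\lambda_i x_i$ with $\lambda_i \downarrow 0$), so your version has the advantage of being self-contained. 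The only cosmetic remark is that the half-line also includes $\lambda = 0$, which is covered trivially by $x \in C$.
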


\begin{theorem}\cite[Theorem 18.5]{convexanalysis}\label{noncomexp}
Let $C$ be a closed convex set containing no lines, and let $S$ be the set of all extreme points and extreme directions of $C$. Then $C= \co{S}$.
\end{theorem}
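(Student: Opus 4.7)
I would prove this by induction on $d = \dim(\aff{C})$. The base case $d = 0$ is trivial, since then $C$ is a single point which is its own unique extreme point. Before starting the induction I would record two structural facts used throughout: since $C$ contains no lines, the lineality space $\rec \cap (-\rec)$ equals $\{0\}$ (an easy consequence of Theorem \ref{closedone} applied to a line of the form $\{x + \lambda y : \lambda \in \RR\}$ with $y \in \rec \cap (-\rec)$), so $\rec$ is a pointed closed convex cone, and hence $\rec$ equals the conic hull of its extreme rays, i.e.\ of the extreme directions of $C$.

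For the inductive step, take an arbitrary $x \in C$ and split into two cases. If $x$ lies on the relative boundary of $C$, pick a supporting hyperplane $H$ to $C$ at $x$ and set $F = H \cap C$. Then $F$ is closed and convex, contains no lines (as a subset of $C$), and satisfies $\dim(\aff{F}) < d$, so the inductive hypothesis applies to $F$. Because $F$ is a face of $C$ and faces of faces are faces, every extreme point and every extreme direction of $F$ is already an extreme point or extreme direction of $C$; thus the set $S_F$ for $F$ is contained in $S$, and we conclude $x \in \co{S_F} \subseteq \co{S}$. If instead $x$ lies in the relative interior of $C$, pick any nonzero direction $y$ parallel to $\aff{C}$ and intersect $C$ with the line $\{x + \lambda y : \lambda \in \RR\}$. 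Since $C$ contains no lines, this intersection is not the whole line. Either both rays $\{x \pm \lambda y : \lambda \geq 0\}$ leave $C$ after finite time, in which case $x$ is a proper convex combination of two relative boundary points of $C$, each handled by the previous case; or exactly one of them is contained in $C$ (both cannot be, by pointedness of $\rec$), say $y \in \rec$, and then $x = x_0 + \lambda_0 y$ with $x_0$ on the relative boundary and $\lambda_0 > 0$. The boundary case gives $x_0 \in \co{S}$, and the extreme ray decomposition of $\rec$ writes $y$ as a nonnegative combination of extreme directions, so $x \in \co{S}$.

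The main obstacle is the pair of structural facts needed before the induction can be launched: (i) a nonempty closed convex set containing no lines has at least one extreme point, and (ii) a pointed closed convex cone is the conic hull of its extreme rays. Statement (i) requires its own dimension-reduction argument and is essentially a very special case of the present theorem, while (ii) is morally the conic analogue of it. Care is needed to prove these in the right order so as to avoid circularity: first establish existence of extreme points by induction on $d$, then deduce the extreme-ray decomposition of pointed cones, and only then run the induction for the full statement above.
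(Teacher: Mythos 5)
The paper does not prove this statement; it is quoted verbatim from Rockafellar (\emph{Convex Analysis}, Theorem 18.5) as a black box, so there is no in-paper argument to compare against. Your sketch is essentially Rockafellar's own proof: induction on $\dim(\aff{C})$, splitting a point into relative-boundary points (pushed into a lower-dimensional exposed face via a \emph{non-trivial} supporting hyperplane, whose extreme points and directions are extreme for $C$ because faces of faces are faces) plus a recession direction, with the pointed recession cone generated by its extreme rays. The argument is sound, and you correctly identify the two ingredients that must be established first --- existence of extreme points for line-free closed convex sets and the extreme-ray decomposition of pointed closed cones --- together with the order in which to prove them so the induction does not become circular; the only small point worth making explicit is that the supporting hyperplane at a relative boundary point must be chosen not to contain all of $C$, which is what guarantees $\dim(\aff{F}) < d$.
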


\begin{theorem}\cite[Theorem 8.7]{convexanalysis}\label{structure}
Let $f$ be a closed proper convex function. Then all the non-empty level sets of the form $\{x \mid f(x)\leq \alpha\}$, $\alpha \in \mathbb{R}$,  have the same recession cone and the same lineality space.
\end{theorem}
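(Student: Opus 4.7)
The plan is to use Theorem \ref{closedone} together with the lower semicontinuity built into the hypothesis that $f$ is closed. First I would record the preparatory observation that every non-empty level set $L_\alpha := \{x : f(x) \leq \alpha\}$ is itself a non-empty closed convex set, so Theorem \ref{closedone} applies and identifies its recession cone with those directions $y$ for which \emph{some} $x \in L_\alpha$ admits $f(x + \lambda y) \leq \alpha$ for all $\lambda \geq 0$.

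The core step is to prove the inclusion $0^+ L_\alpha \subseteq 0^+ L_\beta$ whenever both level sets are non-empty; equality then follows by symmetry, and the lineality-space claim is immediate from the definition of the lineality space as $0^+C \cap (-0^+C)$. Given $y \in 0^+ L_\alpha$ witnessed by some $x_0 \in L_\alpha$, and any $x_1 \in L_\beta$, I would form the convex combinations
\[ z_\mu := (1-\mu)\, x_1 + \mu\bigl(x_0 + \tfrac{\lambda}{\mu}\, y\bigr) = (1-\mu)\, x_1 + \mu\, x_0 + \lambda\, y,\qquad \mu \in (0,1), \]
so that convexity of $f$ yields $f(z_\mu) \leq (1-\mu) f(x_1) + \mu\, f(x_0 + (\lambda/\mu)\, y) \leq (1-\mu)\beta + \mu \alpha$. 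Letting $\mu \downarrow 0$ sends $z_\mu \to x_1 + \lambda y$ and the right-hand side to $\beta$; this is exactly where closedness of $f$ enters, as lower semicontinuity passes the bound to the limit to give $f(x_1 + \lambda y) \leq \beta$, hence $x_1 + \lambda y \in L_\beta$. A second application of Theorem \ref{closedone} then yields $y \in 0^+ L_\beta$.

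The main obstacle, and really the only non-formal point, is the convex-combination trick above: it compresses an entire half-line emanating from $x_0$ (on which $f$ is bounded by $\alpha$) into an arbitrarily short segment anchored at $x_1$, so that the affine displacement $\lambda y$ survives in the limit while the weight placed on the level-$\alpha$ endpoint vanishes. Without closedness of $f$ the argument would break at the limit step, since the pointwise bounds on $f(z_\mu)$ alone do not control $f$ at the limit point.
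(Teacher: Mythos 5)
Your argument is correct, and it is worth noting that the paper offers no proof of this statement at all --- it is quoted verbatim from Rockafellar (Theorem 8.7 of \cite{convexanalysis}), so the only meaningful comparison is with Rockafellar's own treatment. There he derives the result from the recession function $f0^+$, defined via the recession cone of the epigraph of $f$, and identifies the common recession cone of all non-empty level sets as $\{y \mid (f0^+)(y) \leq 0\}$. Your route is more elementary and self-contained: the convex-combination trick
\[
z_\mu = (1-\mu)x_1 + \mu\bigl(x_0 + \tfrac{\lambda}{\mu}y\bigr),
\qquad
f(z_\mu) \leq (1-\mu)\beta + \mu\alpha,
\]
followed by lower semicontinuity as $\mu \downarrow 0$, correctly transfers the recession direction $y$ from $L_\alpha$ to $L_\beta$; symmetry gives equality of recession cones, and the lineality statement follows since the lineality space is $0^+L_\alpha \cap (-0^+L_\alpha)$. (Properness is quietly but legitimately used to keep the convexity inequality free of $\infty - \infty$; and once you have $x_1 + \lambda y \in L_\beta$ for \emph{every} $x_1 \in L_\beta$ you do not even need the second appeal to Theorem \ref{closedone}, though invoking it is harmless.) What Rockafellar's approach buys in exchange for the extra machinery is the explicit description of the common recession cone in terms of $f0^+$, which your argument does not produce; what yours buys is a short direct proof using only convexity and closedness.
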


\begin{corollary}\cite[Corollary 14.2.1]{convexanalysis}\label{dualpolar}
The polar of the barrier cone of a non-empty closed convex set $C$ is the recession cone of $C$.
\end{corollary}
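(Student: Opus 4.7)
The plan is to prove the two inclusions separately, using the representation of a nonempty closed convex set by its support function:
$$C=\{y\in\R{n}\mid \langle x,y\rangle\leq \support{x}{C}\text{ for all }x\in\R{n}\},$$
which follows from the separation theorem and is a standard result in convex analysis (e.g.\ Theorem 13.1 in \cite{convexanalysis}). Denote the barrier cone by $B=\domsupport{\cdot}{C}$; the goal is to show $B^o=\rec$.

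For the inclusion $\rec\subseteq B^o$, I would fix $y\in\rec$ and $x\in B$, then pick any $y_0\in C$. By definition of the recession cone, $y_0+\lambda y\in C$ for every $\lambda\geq 0$, so $\langle x,y_0\rangle+\lambda\langle x,y\rangle\leq \support{x}{C}<+\infty$. Since $\lambda$ can be taken arbitrarily large while the right-hand side is finite, the coefficient of $\lambda$ must be nonpositive, yielding $\langle x,y\rangle\leq 0$ and hence $y\in B^o$.

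For the reverse inclusion $B^o\subseteq\rec$, I would fix $y\in B^o$, choose any $y_0\in C$, and let $\lambda\geq 0$. I claim $y_0+\lambda y\in C$, which by the support-function representation reduces to checking $\langle x,y_0+\lambda y\rangle\leq \support{x}{C}$ at every $x\in\R{n}$. If $x\in B$, then $\langle x,y\rangle\leq 0$ by hypothesis, so $\langle x,y_0+\lambda y\rangle=\langle x,y_0\rangle+\lambda\langle x,y\rangle\leq \langle x,y_0\rangle\leq\support{x}{C}$; if $x\notin B$, the inequality is trivial because $\support{x}{C}=+\infty$. Thus the half-line $\{y_0+\lambda y\mid\lambda\geq 0\}$ is contained in $C$, and Theorem \ref{closedone} then gives $y\in\rec$.

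The main obstacle is simply invoking the support-function representation of $C$; once that is in hand, both directions reduce to manipulating the single inequality $\langle x,y_0\rangle+\lambda\langle x,y\rangle\leq\support{x}{C}$ as $\lambda\to\infty$ or for fixed $\lambda\geq 0$. An alternative route would be to apply Theorem \ref{structure} to $\support{\cdot}{C}$ and identify $B^o$ as the recession cone of a suitable level set, but the direct argument above bypasses the recession-function machinery and isolates the role played by finiteness of the support function on the barrier cone.
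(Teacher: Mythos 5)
The paper itself offers no proof of this corollary: it is quoted verbatim from Rockafellar (Corollary 14.2.1) as a preliminary, and Rockafellar's own derivation passes through the polarity correspondence for conjugate functions (his Theorem 14.2). Your argument is correct and is a genuinely more elementary, self-contained route: both inclusions follow directly from the support-function characterization of $C$ (the paper's Theorem \ref{charc}) together with the homogeneity of the inequality $\langle x,y_0\rangle+\lambda\langle x,y\rangle\leq\support{x}{C}$ in $\lambda$. The forward inclusion ($\rec\subseteq B^{\mathrm{o}}$) is exactly the ``let $\lambda\to\infty$'' argument, and the reverse inclusion correctly splits on whether $x$ lies in the barrier cone, using that $\support{x}{C}=+\infty$ off it. Two cosmetic remarks: when you invoke Theorem \ref{closedone} you should set aside the trivial case $y=0$ (that theorem assumes $y\neq 0$), though in fact your computation already establishes $y_0+\lambda y\in C$ for \emph{every} $y_0\in C$, so the definition of the recession cone applies directly and Theorem \ref{closedone} is not needed; and the first inclusion implicitly uses that $C$ is non-empty to produce $y_0$, which is part of the hypothesis. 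What your approach buys is independence from the recession-function and conjugacy machinery; what Rockafellar's buys is that the same polarity theorem yields a family of related dual statements at once.
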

\begin{theorem}\cite[Theorem 13.1]{convexanalysis}\label{charc}
Let $C$ be a convex set. Then $x\in\cl{C}$ if and only if $\langle x,x^*\rangle\leq \support{x}{C}$ for every vector $x^*$.
\end{theorem}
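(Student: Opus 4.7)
The plan is to prove the biconditional by two distinct arguments: the forward direction by a limiting argument, and the converse by contrapositive via the strict separation theorem. This matches the standard proof of the characterization of the closure of a convex set in terms of supporting hyperplanes, and the proper reading of the statement is that $\langle x^*,x\rangle\leq\support{x^*}{C}$ for every $x^*$, since the support function is a function of its first argument.

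For the ``only if'' direction, I would fix any $x\in\cl{C}$ and any vector $x^*$, then choose a sequence $\{y_n\}\subset C$ with $y_n\to x$. By the definition of the support function, $\langle x^*,y_n\rangle\leq\support{x^*}{C}$ for every $n$. Passing to the limit via continuity of the inner product yields $\langle x^*,x\rangle\leq\support{x^*}{C}$ as required.

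For the converse, I would argue the contrapositive. Suppose $x\notin\cl{C}$. Since $\cl{C}$ is a closed convex subset of $\R{n}$ and $\{x\}$ is a compact set disjoint from it, the strict separation theorem provides a nonzero vector $x^*$ and a scalar $\alpha$ with
\[
\sup_{y\in\cl{C}}\langle x^*,y\rangle\leq\alpha<\langle x^*,x\rangle.
\]
The left-hand supremum equals $\support{x^*}{C}$, since every $y\in C$ is also in $\cl{C}$ and, conversely, the supremum over a set equals the supremum over its closure for a continuous linear functional. Thus $x^*$ violates the defining inequality, which is the desired contrapositive.

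The main subtlety to watch for is the trivial case $C=\emptyset$: under the usual convention $\support{x^*}{\emptyset}=-\infty$, no $x$ satisfies the inequality and no $x$ lies in $\cl{C}=\emptyset$, so the biconditional holds vacuously. Beyond that boundary case, no real obstacle arises: the argument rests only on (i) continuity of the inner product on $\R{n}$ and (ii) strict separation of a point from a closed convex set, both standard tools from finite-dimensional convex analysis.
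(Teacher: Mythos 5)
Your proof is correct and is the standard argument (limits of the inner product for the forward direction, strict separation of a point from a closed convex set for the converse); you are also right that the inequality should be read as $\langle x,x^*\rangle\leq\support{x^*}{C}$. The paper states this result as Theorem 13.1 of Rockafellar and gives no proof of its own, so there is nothing to compare against beyond noting that your argument is the textbook one; the empty-set caveat is moot here since the paper's convention is that convex sets are non-empty.
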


\section{Cone lifts of non-compact convex sets}\label{noncompactlift}
Let $\extc{C}$ denote the set of  extreme points of a closed convex set  $C$ and  $\extrc{C}$  the set of  extreme rays of a closed convex cone $C$.
An extreme ray is also the common direction of   vectors in this ray. In the following part of our paper, we  represent each extreme ray by one vector and denote $\extrc{C}$ as the collection of such vectors.

If $C$ is a compact convex set containing the origin in its interior,    according to
 \cite[Definition 1]{Gouveia2013},  a {\itshape $K$-lift} of $C\subset\R{n}$ is a set $Q=K\cap L$ where $L\subset\R{m}$  is an affine subspace and $\pi:\R{m}\rightarrow\R{n}$ is a linear map  such that
\begin{equation}\label{CKlift}
C=\pi(K\cap L).
\end{equation}
 If $L$ intersects the interior of $K$, we say that $Q$ is a {\itshape proper $K$-lift} of $C$. The slack operator $\svpc$ is {\it$K$-factorizable} if there exists maps
\begin{equation*}
A:\extc{C}\rightarrow K, \ B: \extc{\cp}\rightarrow \ks.
\end{equation*}
such that $\svpc(x,y)=\langle A(x),B(y)\rangle$ for all $(x,y)\in \extc{C}\times\extc{\cp}$, see \cite[Definition 2]{Gouveia2013}.

In this section, we explain how to generalize the  argument in \cite{Gouveia2013} to more general convex sets and show the relationship between  cone lifts of convex sets and  cone factorizations of  slack operators when the closed convex set is not a convex body.

\subsection{ $C$ is  full dimensional}
Assume  that $C$ is a full dimensional closed  convex set in $\R{n}$, we define
\begin{equation*}
\cp=\{x\mid \support{x}{C} \leq 1\}, \, \rccp=\{x\mid \support{x}{C} \leq 0\}, \, C_3=\{x\mid \support{x}{C} \leq -1\}.
\end{equation*}
   It is clear  that $\CA$, $C_3$ are closed convex sets containing no lines and  $\rccp$ is a closed pointed cone that contains $C_3$. Let
\begin{equation*}
D_1=\extc{\cp} \backslash 0,\,
  D_2=\extrc{\rccp} \cap \{x\mid\support{x}{C}=0\},\,
    D_3=\extc{C_3}.
\end{equation*}
By Theorem \ref{structure}, we have $\CC=0^+C_3$.
Let
\begin{equation*}
D_{32}=\extrc{\rccp} \cap \{x\mid\support{x}{C}=-1\}.
\end{equation*}
It is clear that $D_{32}\subseteq D_3$ but $D_{32}$ is not always equal to $D_3$.

\begin{example}\label{nonequal}
 We consider a compact  convex set
 \[C=\{(x,y)\mid x+y\geq 1, x+y\leq 3, \ y-x\geq -1,\ y-x\leq 1.\}\]
  Then, we have
  \begin{eqnarray*}
  \cp &=&\{(x,y)\mid 2x+y\leq 1,\ x+2y\leq 1,\ x\leq 1,\ y\leq 1\},\\
   \CC &=& \{(x,y)\mid x\leq 0,\ y\leq 0\},\\
   C_3 &=& \{(x,y)\mid x\leq -1,\ y\leq -1\},
   \end{eqnarray*}
    see  Figure \ref{fig1}.  Furthermore, we have $D_{32}=\emptyset$ and
    \[
    D_1 =\{(-1,1),(\frac{1}{3}, \frac{1}{3}), 
    (1,-1)\},\,
     D_2=\{(-1,0),(0,-1)\}, \, D_3=\{(-1,-1)\}.\]
 \begin{figure}[!htb]
\includegraphics[width=1\textwidth]{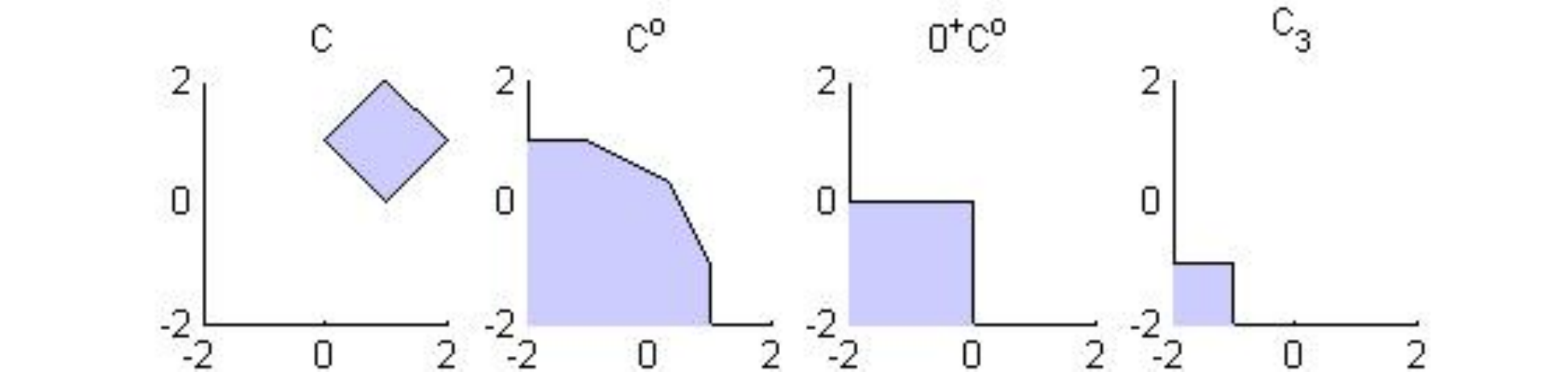}
\caption{Example \ref{nonequal} }\label{fig1}
\end{figure}
\end{example}

\begin{remark}\label{D123}
According to Theorem \ref{noncomexp} and \ref{structure}, we can show:
 \begin{enumerate}
 \item The convex set $\cp$ can be expressed
as convex combinations of  all points in $D_1$ and all directions  of the vectors in $D_2$ and $D_{32}$.
\item The convex cone $\rccp$ can be expressed  as convex combinations of all directions of the vectors in $D_2$ and $D_{32}$.
\item The convex set $C_3$ can be  expressed
as convex combinations of  all points in $D_3$ and all directions of  the vectors  in $D_2$ and $D_{32}$.
\end{enumerate}
\end{remark}

\begin{theorem}\label{small lemma}
Given a full dimensional closed convex set $C\subset\R{n}$,  the following statements are true:
\begin{enumerate}
\item The  set $D_1$ is empty if and only if $C^o$ is a closed cone. If $D_1$ is not empty, for every vector $x$ in $D_1$, we have $\support{x}{C}=1$.
\item The set  $D_2$  is empty if  $C$ contains the origin in its interior.  When $C$ is not compact and contains the origin on its boundary, 
    $D_2$ is not empty and each extreme ray of $\rccp$ is the direction of a vector in $D_2$.
\item  The  set  $D_3$ is empty if and only if $C$ contains the origin. If $D_3$ is not empty, for every vector $x$ in $D_3$, $\support{x}{C}=-1$.
\end{enumerate}
Moreover, the convex cone generated by $\cp$ is $\domsupport{x}{C}$.
\end{theorem}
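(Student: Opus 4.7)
The plan is to exploit the sublinear structure of $f(x):=\support{x}{C}$, since $\cp$, $\CC$, and $C_3$ are precisely the level sets $\{f\le 1\}$, $\{f\le 0\}$ and $\{f\le -1\}$, and extremality interacts cleanly with positive homogeneity. Two preliminary observations will be used repeatedly. First, because $C$ is full dimensional, both $\cp$ and $C_3$ contain no lines: if $v$ and $-v$ both lie in $0^+\cp=\CC$ (which by Theorem~\ref{structure} equals $0^+C_3$ as well), then $\langle v,y\rangle=0$ for every $y\in C$, forcing $v=0$. Consequently Theorem~\ref{noncomexp} applies to $\cp$ and $C_3$. Second, whenever $0\in C$, $f(x)\ge \langle x,0\rangle=0$ for every $x$, so $C_3=\emptyset$ and $f\equiv 0$ on $\CC$.

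\emph{Parts (1) and (3).} These I would handle in parallel. If $\cp$ is a cone then every nonzero $x\in\cp$ is the midpoint of $0$ and $2x$, hence not extreme, giving $D_1=\emptyset$; conversely, if $D_1=\emptyset$, Theorem~\ref{noncomexp} applied to the line-free set $\cp$ yields $\cp=\co{\{0\}\cup\extrc{\cp}}$, a cone. To prove $f(x)=1$ on $D_1$, suppose $x\in D_1$ with $c:=f(x)<1$; by positive homogeneity, $f((1\pm\epsilon)x)=(1\pm\epsilon)c\le 1$ for all sufficiently small $\epsilon>0$, so $x$ is a proper midpoint of $(1-\epsilon)x$ and $(1+\epsilon)x$ in $\cp$, contradicting extremality. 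For (3), the preliminary observation gives $C_3=\emptyset$ when $0\in C$. Conversely, if $0\notin C$, separating $0$ from the closed convex $C$ by a hyperplane and rescaling produces a point of $C_3$; since $C_3$ is a nonempty line-free closed convex set (its recession cone is the pointed $\CC$), it has extreme points by Theorem~\ref{noncomexp}. The identity $f(x)=-1$ on $D_3$ follows from the same $(1\pm\epsilon)x$ perturbation argument.

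\emph{Part (2) and the final identity.} If $0\in\inte{C}$, then $\cp$ is bounded, so $\CC=\{0\}$ has no extreme rays and $D_2=\emptyset$. If $C$ is non-compact with $0\in\partial C$, $\cp$ is unbounded, so $\CC$ is a nontrivial pointed closed convex cone and therefore possesses extreme rays; combined with $f\equiv 0$ on $\CC$ this gives $D_2=\extrc{\CC}$, so every extreme ray of $\CC$ is represented in $D_2$. For the cone identity, $\cone{\cp}\subseteq\domsupport{x}{C}$ follows from $f(\lambda x)=\lambda f(x)\le\lambda<\infty$ whenever $\lambda\ge 0$ and $x\in\cp$. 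Conversely, given $y$ with $M:=f(y)<+\infty$, if $M>0$ then $y/M\in\cp$ and $y=M(y/M)\in\cone{\cp}$, while if $M\le 0$ then $y\in\CC\subseteq\cp$ already. The main technical step is establishing the existence of extreme points of $C_3$ in part (3); once pointedness of $\CC$ is extracted from the full-dimensionality of $C$, this reduces to Theorem~\ref{noncomexp}, so the essence of the only real obstacle is the line-free observation in the preliminary step.
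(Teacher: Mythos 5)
Your proof is correct and follows essentially the same route as the paper's: the $(1\pm\epsilon)x$ perturbation via positive homogeneity of the support function for the values on $D_1$ and $D_3$, boundedness of $\cp$ versus a supporting hyperplane through the origin for $D_2$, separation of the origin from $C$ (the paper invokes Theorem \ref{charc}, which amounts to the same thing) for the emptiness criterion of $C_3$, and rescaling by $\support{x}{C}$ for the identity $\cone{\cp}=\domsupport{x}{C}$. Your explicit verification that full-dimensionality makes $\cp$ and $C_3$ line-free, and your treatment of the case $\support{y}{C}\le 0$ in the cone identity, fill in details the paper leaves implicit but do not change the argument.
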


\begin{proof}

 Since $\cp$  contains no lines, $D_1$ is empty if and only if the origin is the only extreme point of $\cp$, i.e.  $\cp$ is a closed cone.
If there exists an extreme  point $x\in D_1$ such that $\support{x}{C}<1$,  then there exists $\lambda>0$ such that $\support{(1+\lambda)x}{C}\leq 1$ and $\support{(1-\lambda)x}{C}\leq 1$. So $(1-\lambda)x$ and $(1+\lambda)x$ are both in $\cp$ which  contradicts to the fact that $x$ is an extreme point of $\cp$.

When $C$ contains the origin in its interior, $\cp$ is compact and $\rccp$ contains only zero vector. Hence,   $D_2$ is empty. If $C$ contains the origin,  for every $x$ in $\domsupport{x}{C}$,  we have $\support{x}{C}\geq 0$. If the origin is on its boundary, there exists a supporting hyperplane of $C$ through the origin. So $\cp$ is not compact and $\rccp$ contains a nonzero vector. Combined with the fact that $\support{x}{C}=0$  for all $x$ in $\CC$, $D_2$ can represent all the extreme rays of $\rccp$.

 It is clear that $C_3$ is empty if and only if $\support{y}{C}\geq 0$ for all $y\in\R{n}$. By Theorem \ref{charc}, this is equal to say that $C$ contains the origin.
Therefore, $D_3$ is empty if and only if $C$ contains the origin. Similar arguments can be used to show $\support{x}{C}=-1$ for every vector $x$ in $D_3$.

For every $x \in  \cone{\cp}$, there exists $\lambda\geq 0$ and $y\in \cp$ such that $x=\lambda y$. So $\support{x}{C}=\lambda \support{y}{C}<\infty$ and $x\in\domsupport{x}{C}$. On the other hand, for each $x\in\domsupport{x}{C}$,  if $\support{x}{C}=M>0$, then $x/M$ is in $\cp$ and   $x$ is in $\cone{\cp}$. Hence $\cone{\cp}=\domsupport{x}{C}$.
\end{proof}

\begin{remark}
 When $C$ does not contain the origin, it is not easy to identify whether the set  $D_2$ is empty. 
   The convex set $C$ in  Example \ref{nonequal} does not contain the origin,  $D_2=\{(-1, 0), (0, -1)\}$. 
    However, for the convex set  $C$ defined by $C=\{(x,y)\mid\  y\geq x+1, y\geq -x+1 \}$,   we have $\cp =\rccp = \{(x,y)\mid x+y\leq 0, \ y-x\leq 0\}$. The extreme rays of $\rccp$ are  $l_1=(-1,-1)$ and $l_2=(1, -1)$.   We have  $\support{l_1}{C}=\support{l_2}{C}=-1<0$. Hence,  the set  $D_2$ is empty.
\end{remark}

  When $C$ contains the origin in its interior, by Theorem \ref{small lemma},
 $D_2$ and $D_3$ are empty and  $C$ can be characterized by   $D_1$ alone.
However, when $C$ does not contain the origin in its interior, as shown by the following example, the linear functions with coefficients  in  $D_1$ or $D_1 \cup D_2$ can not characterize $C$ completely.

\paragraph{\bf Example \ref{nonequal} (continued)}

  In this example,   every linear function $f(x)=\langle l_1,x\rangle$ where $l_1\in D_1$  has maximal value $1$ on $C$, therefore,
\[E_1=\{(x,y)\mid c_1x+c_2y\leq 1,\ (c_1,c_2)\in D_1\}=\{(x,y)\mid -x+y\leq 1,\ x-y\leq 1,\ x+y\leq 3\}.\]
 The linear function $f(x)=\langle l_2,x\rangle$ where $l_2\in D_2$ has maximal value $0$ on $C$ and
 \[E_2=\{(x,y)\mid c_1x+c_2y\leq 0,\ (c_1,c_2)\in D_2\}=\{(x,y)\mid x\geq 0,\ y\geq 0\}.\]
 The linear function $f(x)=\langle l_3,x\rangle$ where $l_3\in D_3$ has maximal value $-1$ on $C$, hence,
  \[E_3=\{(x,y)\mid c_1x+c_2y\leq -1,\ (c_1,c_2)\in D_3\}=\{(x,y)\mid x+y\geq 1\}.\]
 \begin{figure}[!htb]
\includegraphics[width=1\textwidth]{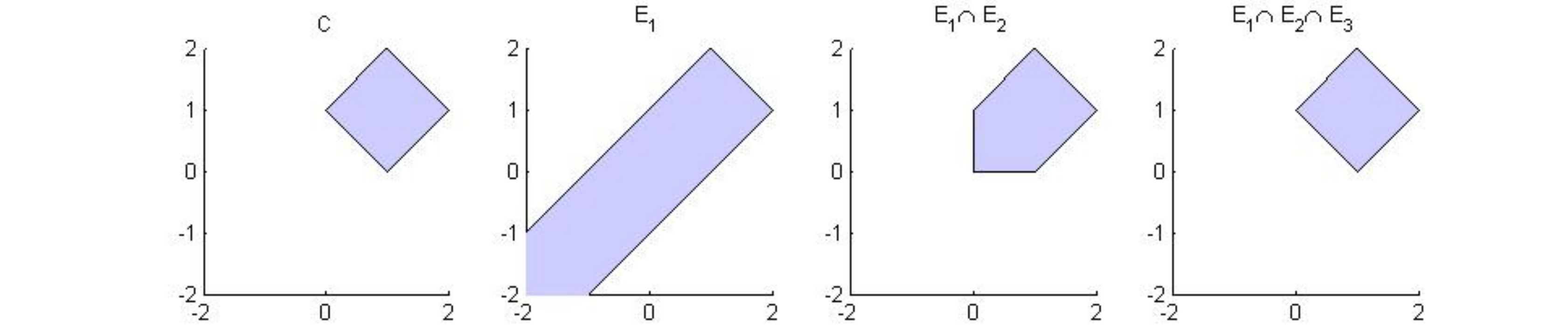}
\caption{Example \ref{nonequal} }
\end{figure}

We now  show that  a full dimensional closed convex set $C$ can be characterized completely by  elements  in $D_1, D_2$ and $D_3$.

\begin{theorem}\label{identify points in C}
Let $C\subset\R{n}$ be a full dimensional closed convex set. Then we have
\begin{equation}\label{condd1d2d3}
x\in C \iff  \left\{\begin{array}{ll}
\langle l_1,x\rangle   \leq 1  &  \forall \, l_1\in D_1,\\
\langle l_2,x\rangle  \leq 0  &\forall  \, l_2\in D_2,\\
\langle l_3,x\rangle   \leq -1 & \forall  \, l_3\in D_3.
  \end{array}
  \right.
  \end{equation}

\end{theorem}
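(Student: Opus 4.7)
The forward implication is immediate from the definitions: each $l_1\in D_1\subset \cp$ satisfies $\support{l_1}{C}\leq 1$, each $l_2\in D_2\subset \rccp$ satisfies $\support{l_2}{C}\leq 0$, and each $l_3\in D_3\subset C_3$ satisfies $\support{l_3}{C}\leq -1$, so for any $x\in C$ the required inequalities hold by the definition of the support function. My plan for the reverse implication is to invoke Theorem \ref{charc}, which reduces $x\in C=\cl{C}$ to checking $\langle x^*,x\rangle\leq \support{x^*}{C}$ for every $x^*\in\R{n}$, and then to feed the extremal representations from Remark \ref{D123} into this inequality.

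First I would dispose of the trivial case $x^*\notin \domsupport{x^*}{C}$, in which the right hand side is $+\infty$. Since Theorem \ref{small lemma} identifies the barrier cone with $\cone{\cp}$, the remaining case is $x^*\in\cone{\cp}$, which I would split according to the sign of $M:=\support{x^*}{C}$. When $M>0$, positive homogeneity of $\support{\cdot}{C}$ puts $x^*/M\in \cp$, so Remark \ref{D123}(1) lets me write $x^*/M=\sum_i \lambda_i l_i + \sum_j \mu_j v_j + \sum_k \nu_k w_k$, where $l_i\in D_1$ with $\sum_i\lambda_i=1$, the $v_j,w_k$ are nonnegative multiples of elements of $D_2$ and $D_{32}$ respectively, and all coefficients are nonnegative. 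Taking the inner product with $x$ and applying the three assumed bounds termwise yields $\langle x^*/M,x\rangle\leq 1$, hence $\langle x^*,x\rangle\leq M$. The case $M=0$ follows analogously from Remark \ref{D123}(2) using only $D_2\cup D_{32}$ direction summands. For $M<0$, the vector $x^*/|M|$ lies in $C_3$, and Remark \ref{D123}(3) writes it as a $D_3$-convex combination plus $D_2\cup D_{32}$ directions, leading to $\langle x^*/|M|,x\rangle\leq -1$ and hence $\langle x^*,x\rangle\leq M$.

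The point I expect to require the most care is the role of the vectors in $D_{32}$, which appear as direction summands in the representations of both $\cp$ and $C_3$ rather than as points. Because the hypothesis on $D_1$ only constrains actual elements of $D_1$, these $D_{32}$ direction terms are not controlled by it directly. The key observation is the containment $D_{32}\subseteq D_3$ recorded just before Example \ref{nonequal}: any $l\in D_{32}$ inherits $\langle l,x\rangle\leq -1\leq 0$ from the third hypothesis, so positive multiples of such $l$ contribute nonpositively to the inner product, which is exactly what the above bookkeeping needs in all three cases. Once this subtlety is absorbed, the rest of the argument is a clean case split driven by Remark \ref{D123} together with the barrier-cone identification from Theorem \ref{small lemma}.
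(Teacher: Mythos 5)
Your proposal is correct and follows essentially the same route as the paper's proof: the forward direction from the definitions, and the reverse direction via Theorem \ref{charc}, a case split on the sign of $\support{x^*}{C}$, and the extremal decompositions of $\cp$, $\rccp$, and $C_3$ from Remark \ref{D123}. Your explicit remark that the $D_{32}$ direction summands are controlled because $D_{32}\subseteq D_3$ forces $\langle l,x\rangle\leq -1\leq 0$ is a point the paper's proof uses only implicitly, and it is handled correctly here.
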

\begin{proof}
Since $C$ is full dimensional, $\cp$ contains no lines and $D_1$, $D_2$ and $D_3$ are well defined. The necessity is clear.   Suppose on the other hand that $x$ satisfies the conditions on  the right hand side of (\ref{condd1d2d3}), we shall show that  $x \in C$.   By Theorem \ref{charc}, it is enough to show that $\langle l,x\rangle  \leq \support{x}{C}$ for every  $l\in \domsupport{x}{C}$. Let  $l_0=\support{l}{C}$, we prove that $\langle l,x\rangle\leq l_0$ in all  three cases below:
\begin{itemize}
\item  If $l_0>0$, then we have  $l/l_0\in \CA$.
 By Remark \ref{D123}, there exist $\lambda_i^1\geq 0,\  \lambda_j^2\geq 0,\  \lambda_k^3\geq 0$ and $x_i\in D_1$, $y_j\in D_{2}$, $z_k\in D_{32}$ satisfying the following equality:
\begin{equation*}
l/l_0=\sum\limits_{i}\lambda_i^1x_i+\sum\limits_{j}\lambda_j^2y_j+\sum\limits_{k}\lambda_k^3z_k, \ \sum\limits_{i}\lambda_i^1=1.
\end{equation*}
 According to the definitions of $D_1, D_2, D_3$,  we have
 \[\langle l/l_0,x\rangle= \sum\limits_{i}\lambda_i^1\langle x_i,x\rangle+\sum\limits_{j}\lambda_j^2\langle y_j,x\rangle+\sum\limits_{k}\lambda_k^3\langle z_k,x\rangle\leq \sum\limits_{i}\lambda_i^1=1.\]
\item
If $l_0=0$, then $l\in \CC$.  By Remark \ref{D123}, there exist $ \lambda_j^2\geq 0,\  \lambda_k^3\geq 0$ and $y_j\in D_{2}$, $z_k\in D_{32}$ satisfying the following equality:
\begin{equation*}
l=\sum\limits_{j}\lambda_j^2y_j+\sum\limits_{k}\lambda_k^3z_k.
\end{equation*}
So $\langle l,x\rangle=\sum\limits_{j}\lambda_j^2\langle y_j,x\rangle+\sum\limits_{k}\lambda_k^3\langle z_k,x\rangle\leq 0.$

\item
If $l_0<0$, then  $l/|l_0|\in C_3$. By Remark \ref{D123},  there exist $\lambda_i^1\geq 0,\  \lambda_j^2\geq 0, \ \lambda_k^3\geq 0$ and $x_i\in D_3$, $y_j\in D_{2}$, $z_k\in D_{32}$ satisfying the following equality:
\begin{equation*}
l/|l_0|=\sum\limits_{i}\lambda_i^1x_i+\sum\limits_{j}\lambda_j^2y_j+\sum\limits_{k}\lambda_k^3z_k, \ \sum\limits_{i}\lambda_i^1=1.
\end{equation*}
So $\langle l/|l_0|,x\rangle=\sum\limits_{i}\lambda_i^1\langle x_i,x\rangle+\sum\limits_{j}\lambda_j^2\langle y_j,x\rangle+\sum\limits_{k}\lambda_k^3\langle z_k,x\rangle\leq \sum\limits_{i}-\lambda_i^1=-1.$
\end{itemize}
\end{proof}

\begin{theorem}\label{identify one extreme}
Suppose $C\subset\R{n}$ is a full dimensional closed convex set. If there exists $x\in \R{n}$ such that $1-\langle l_1,x\rangle=0,\  \forall l_1\in D_1$, $-\langle l_2,x\rangle=0,\ \forall l_2\in D_2$, $-1-\langle l_3,x\rangle=0,\ \forall l_3\in D_3$, then $x$ is the only  extreme point of $C$ and $C$ is a translated convex cone.
\end{theorem}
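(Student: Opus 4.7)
The plan is to promote the hypothesized pointwise equalities to the full equality $\langle l, x\rangle = \support{l}{C}$ on the entire barrier cone of $C$, and then invoke Corollary \ref{dualpolar} to conclude $C = x + 0^+C$. Since the three hypothesized equalities imply the three inequalities in (\ref{condd1d2d3}), Theorem \ref{identify points in C} immediately gives $x \in C$.

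Next, I would mirror the three-case analysis used in the proof of Theorem \ref{identify points in C}. Fix $l$ in the barrier cone of $C$ and set $l_0 = \support{l}{C}$. When $l_0 > 0$, use Remark \ref{D123}(1) to write
\begin{equation*}
l/l_0 \;=\; \sum_i \lambda_i^1 x_i + \sum_j \lambda_j^2 y_j + \sum_k \lambda_k^3 z_k, \qquad \sum_i \lambda_i^1 = 1,\quad \lambda_i^1,\lambda_j^2,\lambda_k^3 \geq 0,
\end{equation*}
with $x_i \in D_1$, $y_j \in D_2$ and $z_k \in D_{32}$. Sublinearity and positive homogeneity of the support function, together with $\support{x_i}{C} = 1$, $\support{y_j}{C} = 0$ and $\support{z_k}{C} = -1$, give the squeeze
\begin{equation*}
1 \;=\; \support{l/l_0}{C} \;\leq\; \sum_i \lambda_i^1 - \sum_k \lambda_k^3 \;=\; 1 - \sum_k \lambda_k^3,
\end{equation*}
which forces $\lambda_k^3 = 0$ for every $k$. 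The hypothesis on $D_1$ and $D_2$ then yields $\langle l/l_0, x\rangle = \sum_i \lambda_i^1 = 1$, so $\langle l, x\rangle = l_0 = \support{l}{C}$. The cases $l_0 = 0$ (decomposing $l \in \rccp$ via Remark \ref{D123}(2)) and $l_0 < 0$ (decomposing $l/|l_0| \in C_3$ via Remark \ref{D123}(3)) follow the same pattern: comparing the support function on the two sides again annihilates every $D_{32}$ coefficient, after which the hypothesis on $D_2 \cup D_3$ delivers the desired equality.

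Once $\langle l, x\rangle = \support{l}{C}$ holds throughout the barrier cone, for any $y \in C$ and any such $l$ one has $\langle l, y - x\rangle \leq \support{l}{C} - \langle l, x\rangle = 0$, so Corollary \ref{dualpolar} gives $y - x \in 0^+C$; thus $C \subseteq x + 0^+C$, and the reverse inclusion is immediate from $x \in C$. Hence $C = x + 0^+C$ is a translated convex cone with apex $x$. Finally, if $y \in C$ is any extreme point, write $y = x + v$ with $v \in 0^+C$; then $y = \tfrac{1}{2}\bigl((x + 2v) + x\bigr)$ exhibits $y$ as the midpoint of two points of $C$, and extremality forces $v = 0$, so $y = x$.

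The main obstacle is the squeezing step in the three-case analysis. The hypothesis only pins down $\langle l, x\rangle$ on $D_1 \cup D_2 \cup D_3$, never directly on $D_{32}$, so one must exploit the precise values of $\support{\cdot}{C}$ on the level sets defining $\cp$, $\rccp$ and $C_3$ (namely $1$, $0$, $-1$) to annihilate the $D_{32}$ contributions in the decompositions from Remark \ref{D123}. Only after those contributions vanish can the hypothesis on $D_1, D_2, D_3$ be combined with the decomposition to yield the equality across the entire barrier cone.
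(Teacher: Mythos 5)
Your proposal is correct and follows the same overall strategy as the paper's proof: get $x\in C$ from Theorem \ref{identify points in C}, use the decompositions of Remark \ref{D123} to upgrade the hypothesized equalities to $\support{l}{C}\leq\langle l,x\rangle$ on the whole barrier cone, and then conclude $C=x+\rpc$. Two local differences are worth flagging. First, the ``squeeze'' step you describe as the main obstacle is valid but unnecessary, and the reason you give for needing it is not quite right: since $D_{32}\subseteq D_3$, the hypothesis \emph{does} pin down $\langle z,x\rangle=-1$ for every $z\in D_{32}$, so you can evaluate $\langle l,x\rangle$ on the full decomposition directly and compare it with the sublinearity bound in one stroke, exactly as the paper does via
\begin{equation*}
\support{l}{C}\;\leq\;\sum_i\lambda_i^1\support{x_i}{C}+\sum_k\lambda_k^3\support{z_k}{C}\;=\;\sum_i\lambda_i^1-\sum_k\lambda_k^3\;=\;\langle l,x\rangle,
\end{equation*}
with no need to show any coefficient vanishes. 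Second, your passage from the barrier-cone inequality to $C\subseteq x+\rpc$ is more elementary than the paper's: you apply Corollary \ref{dualpolar} pointwise to $y-x$ for $y\in C$, whereas the paper takes closures of the convex function $\langle l,x\rangle+\indicator{l}{\domsupport{l}{C}}$ and identifies the result with $\support{l}{x+\rpc}$. Your version buys a shorter argument at the cost of implicitly using that the polar of the barrier cone equals the polar of its closure, which is immediate from the definition of polarity, so nothing is lost. The final midpoint argument for uniqueness of the extreme point is fine and matches what the paper asserts without proof.
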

\begin{proof}
By Theorem \ref{identify points in C}, $x$ is in $C$. By Theorem \ref{small lemma} and Remark \ref{D123}, for  every $l \in\domsupport{x}{C}$,
 there exist $\lambda_i^1\geq0,\  i=1,\ldots,i_1$, $\lambda_j^2\geq0,\  j=1,\ldots,j_2$ and $\lambda_k^3\geq 0,\  k=1,\ldots,k_3$ such that $l=\sum\limits_{i=1}^{i_1}\lambda_i^1 x_i+\sum\limits_{j=1}^{j_2}\lambda_j^2 y_j+\sum\limits_{k=1}^{k_3}\lambda_k^3 z_k$ for $x_i\in D_1$,  $y_j \in D_2$ and $z_k \in D_{32}$. Since for every  $y_j \in D_2$, $\support{y_j}{C}= 0$,
   we have the following inequality: 
\begin{align*}
\support{l}{C}&\leq \sum\limits_{i=1}^{i_1}\lambda_i^1\support{x_i}{C}+\sum\limits_{k=1}^{k_3}\lambda_k^3\support{z_k}{C}=\sum\limits_{i=1}^{i_1}\lambda_i^1-\sum\limits_{k=1}^{k_3}\lambda_k^3\\
&=\sum\limits_{i=1}^{i_1}\lambda_i^1\langle x_i, x\rangle+\sum\limits_{k=1}^{k_3}\lambda_k^3\langle z_k,x\rangle=\langle l,x\rangle.
\end{align*}

Furthermore,  it is clear that $\support{l}{C}\leq \langle l,x\rangle +\indicator{l}{\domsupport{l}{C}}$ for every $l\in\R{n}$. Take closure for both sides, we have:
\begin{align*}
\support{l}{C}&\leq \langle l,x\rangle +\indicator{l}{\cl{\domsupport{x}{C}}}\\
&=\langle l,x\rangle +\indicator{l}{(0^+C)^o}\ (\text{by Corollary \ref{dualpolar}})\\
&=\support{l}{x+\rpc}.
\end{align*}
This implies that $C\subseteq x+\rpc$. On the other hand, since $x\in C$,  we have $x+\rpc\subseteq C$. Therefore,   $C=x+\rpc$, i.e.  $C$  is a translated convex cone and contains $x$ as the only extreme point.
\end{proof}

\subsubsection{$C$ contains no lines}\label{sec3.1.1}
When $C\subset\R{n}$ is a full dimensional closed convex set and contains no lines, we set the slack operator $S_C$ to be

\begin{equation}\label{slackopadd}
S_C=\left\{ \begin{array}{ll}
\svpcone(x,y)=1-\langle x,y\rangle  &~\text{for}~ (x,y)\in C\times D_1, \\
\svpctwo(x,y)=-\langle x,y\rangle  &~\text{for}~ (x,y)\in C\times D_2,\\
\svpcthree(x,y)=-1-\langle x,y\rangle  &~\text{for}~  (x,y)\in C\times D_3.\\
\end{array}
 \right.
\end{equation}
In this definition, $D_1$, $D_2$ and $D_3$ are disjoint and   may be empty for some convex set $C$. If one of them is empty, we just remove the corresponding slack operator from the definition.
\begin{define}\label{KKfactor}
Let $K\subset\R{m}$ be a closed convex cone and $C\subset\R{n}$   a full dimensional  convex set containing no lines. We say that the slack operator $S_C$ defined by (\ref{slackopadd})
is $K$-factorizable, if there exist maps
\begin{align*}
A:C\rightarrow K, \ B_1: D_1\rightarrow \ks,\  B_2: D_2\rightarrow \ks,\  B_3: D_3\rightarrow \ks.
\end{align*}
such that
\begin{itemize}
\item $\svpci(x,y)=\langle A(x),B_i(y)\rangle$ for all $(x,y)\in C\times D_i$ and $i=1,2,3$.
\end{itemize}
\end{define}

\begin{theorem}\label{pointedcaseadd}
Let $K \subset\R{m}$ be a full dimensional  closed convex cone and  $C\subset\R{n}$  a full dimensional closed convex set containing no lines.   Assume   $C$ is not a translated cone.  If $C$ has a proper $K$-lift defined by (\ref{CKlift}), then the slack operator $S_C$  defined by (\ref{slackopadd}) 
is $K$-factorizable. Conversely, if $S_C$ defined by (\ref{slackopadd}) is 
$K$-factorizable, then $C$ has a $K$-lift defined by (\ref{CKlift}).
\end{theorem}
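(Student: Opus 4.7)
The plan is to follow the Gouveia-Parrilo-Thomas template from \cite{Gouveia2013}, generalized so that the three slack families in (\ref{slackopadd}) are handled in parallel. Both directions are structurally similar; the delicate new ingredient is a well-definedness check in the reverse direction, which is precisely where the ``not a translated cone'' hypothesis enters.

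For the forward direction, assume $C=\pi(K\cap L)$ with $L\cap\mathrm{int}(K)\neq\emptyset$. For each $x\in C$ pick some $A(x)\in K\cap L$ with $\pi(A(x))=x$. For $y\in D_i$ and the matching constant $c_1=1,\ c_2=0,\ c_3=-1$, the affine function $f_{y,i}(z):=c_i-\langle y,\pi(z)\rangle$ is non-negative on $K\cap L$, since $\pi(z)\in C$ forces $\langle y,\pi(z)\rangle\le c_i$ by the defining inequalities of $D_1,D_2,D_3$. The Slater condition $L\cap\mathrm{int}(K)\neq\emptyset$ enables strong conic duality, producing $B_i(y)\in K^*$ with $f_{y,i}(z)=\langle B_i(y),z\rangle$ for every $z\in L$. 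Substituting $z=A(x)$ yields the required identity $\svpci(x,y)=\langle A(x),B_i(y)\rangle$.

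For the reverse direction, given $A, B_1, B_2, B_3$ satisfying Definition \ref{KKfactor}, I construct $\pi$ and $L$. Choose a basis $A(x_{i_1}),\dots,A(x_{i_k})$ of $\mathrm{span}(A(C))$ from among the values of $A$, set $\pi(A(x_{i_j}))=x_{i_j}$, and extend linearly to $\R{m}$. Define
\begin{align*}
L=\{z\in\R{m}:\ & \langle B_1(y),z\rangle+\langle y,\pi(z)\rangle=1,\ \forall y\in D_1,\\
& \langle B_2(y),z\rangle+\langle y,\pi(z)\rangle=0,\ \forall y\in D_2,\\
& \langle B_3(y),z\rangle+\langle y,\pi(z)\rangle=-1,\ \forall y\in D_3\}.
\end{align*}
For $z\in K\cap L$, the non-negativity $\langle B_i(y),z\rangle\ge 0$ combined with the defining equalities gives $\langle y,\pi(z)\rangle\le c_i$ on each $D_i$, so $\pi(z)\in C$ by Theorem \ref{identify points in C}. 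Conversely, for every $x\in C$ the factorization identity together with $\pi(A(x))=x$ shows $A(x)\in K\cap L$. Hence $C=\pi(K\cap L)$; the set $L$ is a genuine affine subspace since $A(x_0)\in L$ for any $x_0\in C$ and the defining equations are linear in $z$.

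The main obstacle is verifying that $\pi(A(x))=x$ holds for every $x\in C$, not only for the chosen basis elements. Writing $A(x)=\sum_j c_j A(x_{i_j})$ and setting $v=x-\sum_j c_j x_{i_j}$, $\sigma=1-\sum_j c_j$, applying $\langle B_i(y),\cdot\rangle$ to this relation and using the factorization identity produces $\langle y,v\rangle=\sigma$ for $y\in D_1$, $\langle y,v\rangle=0$ for $y\in D_2$, and $\langle y,v\rangle=-\sigma$ for $y\in D_3$. If $\sigma=0$, then $v$ is orthogonal to $D_1\cup D_2\cup D_3$; by Remark \ref{D123} and Theorem \ref{small lemma}, $\mathrm{cone}(D_1\cup D_2\cup D_{32})$ equals the barrier cone of $C$, which is full-dimensional because $C$ contains no lines (so $0^+C$ is pointed and $(0^+C)^o$ is full-dimensional by Corollary \ref{dualpolar}). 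Since $D_{32}\subseteq D_3$, this forces $D_1\cup D_2\cup D_3$ to span $\R{n}$, giving $v=0$. If instead $\sigma\neq 0$, then $v/\sigma$ satisfies the hypothesis of Theorem \ref{identify one extreme}, which would make $C$ a translated cone, contradicting our assumption. Thus $v=0$, $\pi$ is well-defined, and the construction closes.
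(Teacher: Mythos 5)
Your proof is correct and follows essentially the same route as the paper, which establishes this theorem as a slight modification of the argument for Theorem \ref{pointedcase}: the forward direction is the standard conic-duality construction of the $B_i$ (valid because $\support{y}{C}$ equals exactly $1,0,-1$ on $D_1,D_2,D_3$ by Theorem \ref{small lemma} and the definition of $D_2$), and your well-definedness dichotomy for $\pi$ is the same mechanism the paper uses, excluding the translated-cone case via Theorem \ref{identify one extreme} and killing the $\sigma=0$ case by the full-dimensionality of $\cone{\cp}=\domsupport{x}{C}$. The only cosmetic difference is that you build $\pi$ explicitly from a basis of $\mathrm{span}(A(C))$ and cut $L$ out of $\R{m}$ directly, whereas the paper works with a graph $L\subset\R{n}\times\R{m}$, projects to $L_K$, and recovers $\pi$ from the uniqueness of the first component together with $0\notin L_K$.
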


 The proof of Theorem \ref{pointedcase} can be modified slightly to show the correctness of Theorem \ref{pointedcaseadd}.

 \begin{example}\label{ex3.10}
   Consider $C=\{x\mid x\geq -1\}$. Let $K$ be $\mathcal{S}_+^3$ and
      \[L=\left\{\begin{pmatrix} a_{11} & a_{12} & a_{13} \\ a_{21} &  a_{22} & a_{23}\\ a_{31} &  a_{32} & a_{33}\end{pmatrix}\in \sm{3}\mid a_{11}=1,\ a_{13}=0,\ a_{23}=0,\ a_{33}=a_{12}+1\right\}.\] We construct a linear map
   $\pi$ from  $\sm{3}$ to $\R{1}$: $\begin{pmatrix} a_{11} & a_{12} & a_{13} \\ a_{21} &  a_{22} & a_{23}\\ a_{31} &  a_{32} & a_{33}\end{pmatrix}\rightarrow a_{33}$. It is easy to check that $C$ has a $K$-lift, i.e. $C=\pi(K\cap L)$.

   Now let us check whether  the slack operator $\svpc$ defined by (\ref{slackopadd}) is $K$-factorizable. Because $C$ contains the origin in its interior, according to Theorem \ref{small lemma}, $D_2$ and $D_3$ are empty. Since
  $\cp=\{x\mid -1\leq x\leq 0\}$, we have $D_1=\extc{\cp} \backslash 0=\{(-1)\}$.
   Let us  define the maps $A:C\rightarrow K, \ B_1: D_1\rightarrow \ks$ as
   \begin{eqnarray*}
   A(x)=
    \begin{pmatrix} 1 & x & 0 \\ x &  x^2 & 0\\ 0 &  0 & x+1\end{pmatrix}, ~
    B_1(y)=\begin{pmatrix} 0 & 0 & 0 \\ 0 &  0 & 0\\ 0 &  0 & -y\end{pmatrix}.
    \end{eqnarray*}
      Since $1-\langle x,y\rangle=\langle A(x),B_1(y)\rangle$ for all $(x,y)\in C\times D_1$, we claim that the slack operator $\svpc$ is $K$-factorizable.
 \end{example}
\begin{remark}\label{introduction}
 Although Definition \ref{KKfactor} and Theorem \ref{pointedcaseadd} have  extended the argument in \cite{Gouveia2013} to more general convex sets, it is not easy  to use. The main reason is that we have to define the map $A$ and  check whether $S_C$ is factorizable for all points in $C$. This is difficult  since $C$  usually contains infinite number of points  even when it is a polyhedron.
\end{remark}

 By   Theorem \ref{noncomexp},  if $C$ contains no lines, every point in it can be expressed as the convex combination of extreme points $\extc{C}$  and extreme directions $\extrc{0^+C}$. By introducing the cone lift of the recession cone $0^+C$ and defining its slack operator and cone factorization,  we  extend results in \cite{Gouveia2013} to  non-compact convex sets.

\begin{define}
Let $K\subset \R{m}$ be a closed convex cone. A  $K$-lift of a non-compact closed convex set  $C\subset\R{n}$ is a set $Q=K \cap L$ such that
\begin{equation}\label{Klift}
C=\pi(K\cap L), ~ 0^+C=\pi (K\cap 0^+L)
 \end{equation}
 where $L\subset\R{m}$ is an affine subspace and $\pi:\R{m}\rightarrow\R{n}$ is a linear map. We say that $Q$ is a proper $K$-lift of $C$, if $L\cap \inte{K}\neq \emptyset$.
\end{define}

We would like to emphasize that the condition $0^+C=\pi (K\cap 0^+L)$ is not redundant and can not be deduced from the condition $C=\pi(K\cap L)$ in general, see the following example.

\paragraph{\bf Example \ref{ex3.10} (continued)}

 Although we have  $C=\pi(K\cap L)$,  it is clear that
 \[\mathbb{R}_+^1=0^+C \neq \pi (K\cap 0^+L)\]
  since
    \[0^+L=\left\{\begin{pmatrix} a_{11} & a_{12} & a_{13} \\ a_{21} &  a_{22} & a_{23}\\ a_{31} &  a_{32} & a_{33}\end{pmatrix}\in \sm{3}\mid a_{11}=0,\ a_{13}=0,\ a_{23}=0,\ a_{33}=a_{12}\right\}\]
 and $~\pi (K\cap 0^+L)=\{0\}$.

We define the  slack operator $S_C$ of a full dimensional closed convex set $C$  as

\begin{equation}\label{slackop}
S_C=\left\{ \begin{array}{ll}
\svpcone(x,y)=1-\langle x,y\rangle  &~\text{for}~ (x,y)\in \extc{C}\times D_1, \\
\svpctwo(x,y)=-\langle x,y\rangle  &~\text{for}~ (x,y)\in \extc{C}\times D_2,\\
\svpcthree(x,y)=-1-\langle x,y\rangle  &~\text{for}~  (x,y)\in \extc{C}\times D_3,\\
 \svprci(x,y)=-\langle x,y\rangle  &~\text{for}~ (x,y)\in \extrc{\rpc}\times D_i, i=1,2,3.
 \end{array}
 \right.
 \end{equation}
In this definition, $D_1$, $D_2$ and $D_3$ are disjoint and   may be empty for some convex set $C$. If one of them is empty, we just remove the corresponding slack operator from the definition.

\begin{define}\label{Kfactor}
Let $K\subset\R{m}$ be a closed convex cone and $C\subset\R{n}$   a full dimensional closed convex set containing no lines. We say that the  slack operator $S_C$ defined by (\ref{slackop})
is $K$-factorizable, if there exist maps
\begin{align*}
&A_1:\extc{C}\rightarrow K, \  A_2:\extrc{\rpc}\rightarrow K,\\
&B_1: D_1\rightarrow \ks,\  B_2: D_2\rightarrow \ks,\  B_3: D_3\rightarrow \ks.
\end{align*}
such that
\begin{itemize}
\item $\svpci(x,y)=\langle A_1(x),B_i(y)\rangle$ for all $(x,y)\in \extc{C}\times D_i$ and $i=1,2,3$.
\item $\svprci(x,y)=\langle A_2(x),B_i(y)\rangle$ for all $(x,y)\in \extrc{\rpc}\times D_i$ and $i=1,2,3$.%
\end{itemize}
\end{define}

\begin{theorem}\label{pointedcase}
Let $K\subset\R{m}$ be a full dimensional  closed convex cone. Assume $C\subset\R{n}$ is a full dimensional closed convex set containing no lines and $C$ is not a translated cone.  If $C$ has a proper $K$-lift defined by (\ref{Klift}), then the slack operator $S_C$  defined by (\ref{slackop}) 
is $K$-factorizable. Conversely, if $S_C$ defined by (\ref{slackop}) is 
$K$-factorizable, then $C$ has a $K$-lift defined by (\ref{Klift}).
\end{theorem}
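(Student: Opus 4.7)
The plan extends the Gouveia--Parrilo--Thomas construction to accommodate the three dual pieces $D_1,D_2,D_3$ and the extreme directions. Assume $C=\pi(K\cap L)$, $\rpc=\pi(K\cap 0^+L)$, and $L\cap\inte{K}\neq\emptyset$. For each $x\in\extc{C}$ I pick $A_1(x)\in K\cap L$ with $\pi(A_1(x))=x$, and for each $r\in\extrc{\rpc}$ I pick $A_2(r)\in K\cap 0^+L$ with $\pi(A_2(r))=r$. For each $y\in D_i$ with constant $c_i\in\{1,0,-1\}$, the affine functional $h_i^y(z):=c_i-\langle y,\pi(z)\rangle$ on $L$ is nonnegative on $K\cap L$ because $\pi(z)\in C$. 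The properness assumption $L\cap\inte{K}\neq\emptyset$ (Slater) then lets me invoke the standard conic extension lemma: $h_i^y$ extends to a linear functional on $\R{m}$ whose gradient $B_i(y)$ lies in $\ks$. Evaluating at $A_1(x)$ gives $\svpci(x,y)=\langle A_1(x),B_i(y)\rangle$ at once. For extreme directions, I fix any $x_0\in\extc{C}$; since $A_2(r)+A_1(x_0)\in L$, subtracting the identity at $A_1(x_0)$ from that at $A_2(r)+A_1(x_0)$ produces $\langle B_i(y),A_2(r)\rangle=-\langle y,r\rangle=\svprci(r,y)$.

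\noindent\textbf{Converse direction.} Given the factorization maps, I fix $x_0\in\extc{C}$, take $L$ to be the affine hull of $\{A_1(x):x\in\extc{C}\}\cup\{A_1(x_0)+A_2(r):r\in\extrc{\rpc}\}$, and define $\pi:\R{m}\to\R{n}$ by $\pi(A_1(x))=x$ and $\pi(A_2(r))=r$, extended linearly. Well-definedness of $\pi$ on the span of the $A_i$'s reduces to the claim: if $\sum\alpha_i A_1(x_i)+\sum\beta_j A_2(r_j)=0$ with $\sum\alpha_i=0$, then $v:=\sum\alpha_i x_i+\sum\beta_j r_j=0$. Pairing this relation against $B_k(y)$ and substituting the factorization identities gives $\langle y,v\rangle=0$ for every $y\in D_1\cup D_2\cup D_3$; since the polar $\rpc$ of $\cone{\cp}=\cone{D_1\cup D_2\cup D_{32}}$ (Theorem~\ref{small lemma} and Corollary~\ref{dualpolar}) is pointed ($C$ contains no lines), $\cone{\cp}$ spans $\R{n}$, forcing $v=0$. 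A direct expansion then shows that for $z\in L$ (resp.\ $z\in 0^+L$) and $y\in D_i$ the identity $\langle B_i(y),z\rangle+\langle y,\pi(z)\rangle=c_i$ (resp.\ $=0$) holds, so $B_i(y)\in\ks$ together with $z\in K$ yields $\langle y,\pi(z)\rangle\leq c_i$ (resp.\ $\leq 0$); by Theorem~\ref{identify points in C} (resp.\ Corollary~\ref{dualpolar}) this places $\pi(z)$ in $C$ (resp.\ $\rpc$). For the reverse inclusions I apply Theorem~\ref{noncomexp}: writing $x\in C$ as $\sum\lambda_j x_j+\sum\mu_k r_k$ with $x_j\in\extc{C}$, $r_k\in\extrc{\rpc}$, the combination $\sum\lambda_j A_1(x_j)+\sum\mu_k A_2(r_k)$ lies in $K\cap L$ and maps under $\pi$ to $x$; the analogue for $\rpc$ uses only the directional part.

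\noindent\textbf{Main obstacle.} The decisive input in the forward direction is the conic extension lemma (a form of strong LP duality / Hahn--Banach), whose hypothesis is precisely $L\cap\inte{K}\neq\emptyset$; without this properness the nonnegative affine functional $h_i^y$ on $K\cap L$ need not extend to a nonnegative functional on all of $K$. In the converse the main delicacy is the simultaneous well-definedness of $\pi$ across both $A_1$- and $A_2$-generators, which I handle via the pairing argument above; this step relies on the full-dimensionality of $C$ and on the three-piece polar description of Section~\ref{sec3.1.1}. The assumption that $C$ is not a translated cone rules out the degenerate configuration of Theorem~\ref{identify one extreme}, where the defining equations of $L$ would jointly collapse the affine hull to a single point and render the construction vacuous.
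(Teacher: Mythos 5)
Your forward direction is essentially the paper's: lift the extreme points into $K\cap L$ and the extreme directions into $K\cap 0^+L$, then obtain $B_i(y)\in\ks$ from the properness/Slater condition exactly as in Gouveia--Parrilo--Thomas; your trick of subtracting the affine identity at $A_1(x_0)$ from the one at $A_1(x_0)+A_2(r)$ is just a repackaging of the paper's direct computation with $w_2\in L_0$ and $z\in L_0^{\bot}$. The converse is where you genuinely deviate. The paper defines $L\subset\R{n}\times\R{m}$ \emph{implicitly} by the slack equations, projects onto the second factor to get $L_K$, rules out $0\in L_K$ via Theorem~\ref{identify one extreme}, and obtains $\pi$ from the uniqueness of the point $x_z$ paired with each $z$ (which uses that $C$ contains no lines). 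You instead build $L$ \emph{explicitly} as the affine hull of the lifted generators and define $\pi$ on those generators, proving well-definedness by pairing linear relations against the $B_i$'s and using that $\cone{\cp}$ spans $\R{n}$. The two constructions are dual descriptions of compatible objects (your $L$ sits inside the paper's $L_K$, and both satisfy the lift conditions), and both ultimately rest on the same two inputs: Theorem~\ref{identify points in C} for $\pi(K\cap L)\subseteq C$ and Theorem~\ref{identify one extreme} to exploit the non-translated-cone hypothesis. Your version buys a more concrete, generator-level picture of the lift; the paper's buys a shorter verification that $L$ is an affine subspace.

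One point you should tighten: you assert that well-definedness of $\pi$ ``reduces to'' linear relations $\sum\alpha_i A_1(x_i)+\sum\beta_j A_2(r_j)=0$ with $\sum\alpha_i=0$. A general relation need not have $\sum\alpha_i=0$; you must first show that a relation with $s:=\sum\alpha_i\neq 0$ is impossible. It is: pairing with $B_k(y)$ gives $\langle v/s,y\rangle=c_k$ for all $y\in D_k$ and all $k$, so Theorem~\ref{identify one extreme} would make $C$ a translated cone, contradicting the hypothesis. You gesture at exactly this in your closing paragraph (it is the precise counterpart of the paper's ``$0\notin L_K$'' step), but it belongs in the well-definedness argument itself, not as an afterthought. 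With that insertion the converse is complete.
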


\begin{proof}
 Suppose $C$ has a proper $K$-lift, then we  set  $L=w_0+\lin$ in $\R{m}$ where $\lin$ is a linear subspace, $w_0 \in \inte{K}$ and $\pi:\R{m}\rightarrow\R{n}$ is a linear map such that $C=\pi(K\cap L), ~ 0^+C=\pi (K\cap 0^+L)$. Since  $ 0^+L=\lin$, we have $0^+C=\pi(K\cap \lin)$. We need to construct maps $A_1, A_2$ and $B_1, B_2, B_3$ from the $K$-lift that factorize the slack operator $S_C$.

  For every point  $x_1 \in \extc{C}$, there exists a point $w_1$  in the convex set $K \cap L$ such that $\pi(w_1)=x_1$. We define  $A_1(x_1):=w_1$. Moreover,   for every point $x_2\in \extrc{\rpc}$, there exists a point $w_2$ in the convex set $K\cap \lin$ such that $\pi(w_2)=x_2$. We define $A_2(x_2):=w_2$.


 The definitions of $B_1,B_2$ and $B_3$  are  similar to those given in
  \cite[Theorem 1]{Gouveia2013}, which use the properness condition to guarantee the strong duality holds.  The only difference is that for $l_1\in D_1$, $\max\{\langle l_1,x\rangle\mid x\in C\}$ is $1$,  for $l_2\in D_2$, $\max\{\langle l_2,x\rangle\mid x\in C\}$ is $0$ and  for $l_3\in D_3$, $\max\{\langle l_3,x\rangle\mid x\in C\}$ is $-1$.  Therefore, we only give the definitions and omit all proofs. For every $y_1 \in D_1$, we define $B_1(y_1):=z-\pi^*(y_1)$ where $z$ is any point in $\lin^\bot \cap (K^*+\pi^*(y_1))$ that satisfies $\langle w_0, z\rangle=1$. For every  $y_2 \in D_2$, we define $B_2(y_2):=z-\pi^*(y_2)$ where $z$ is any point in $\lin^\bot \cap (K^*+\pi^*(y_2))$ that satisfies $\langle w_0, z\rangle=0$.   For every  $y_3 \in D_3$, we define $B_3(y_3):=z-\pi^*(y_3)$ where $z$ is any point in $\lin^\bot \cap (K^*+\pi^*(y_3))$ that satisfies $\langle w_0, z\rangle=-1$.  It remains to check that $\svpci$ and $\svprci$ have  $K$-factorizations given in Definition \ref{Kfactor}. The $K$-factorization of $\svpci$ can be checked by the same method used in \cite[Theorem 1]{Gouveia2013}.
  For  each $x_2  \in \extrc{\rpc}$ and $y_i\in D_i,\ i=1,2,3$, we have
\begin{align*}
\langle x_2,y_i\rangle&=\langle \pi(w_2), y_i\rangle=\langle w_2, \pi^*(y_i)\rangle=\langle w_2, z-B_i(y_i)\rangle\\
&=-\langle w_2,B_i(y_i)\rangle=-\langle A_2(x_2),B_i(y_i)\rangle.
\end{align*}
Therefore, $\svprci$ is $K$-factorizable according to Definition \ref{Kfactor}.

 Suppose on the other hand that  $S_C$ 
 is $K$-factorizable, i.e. there exist maps $A_1, A_2$ and $B_1, B_2,B_3$ such that
   $\svpci(x,y)=\langle A_1(x),B_i(y)\rangle$ for all $(x,y)\in \extc{C}\times D_i$, $i=1,2,3$
 and $\svprci(x,y)=\langle A_2(x),B_i(y)\rangle$ for all $(x,y)\in \extrc{\rpc}\times D_i$, $i=1,2,3$.
We  construct the affine space
\begin{align*}
L=&\{(x,z) \in \R{n} \times \R{m} \mid 1-\langle x,y_1\rangle=\langle z,B_1(y_1)\rangle,\ \forall y_1\in D_1,\\
& \hfill -\langle x,y_2\rangle=\langle z,B_2(y_2)\rangle,\ \forall y_2\in D_2,
\ -1-\langle x,y_3\rangle=\langle z,B_3(y_3)\rangle,\ \forall y_3\in D_3\},
\end{align*}
and  let $L_K$ be the projection of $L$ onto the second component $z$.

  We need to show firstly  that $0\notin L_K$. If $0\in L_K$, there exists $x\in \R{n}$ such that $1-\langle x,y_1\rangle=0,\  \forall y_1\in D_1$, $-\langle x,y_2\rangle=0,\ \forall y_2\in D_2$, $-1-\langle x,y_3\rangle=0,\ \forall y_3\in D_3$.
By Theorem \ref{identify one extreme}, $C$ is a translated cone and  this contradicts to the assumption. For  each $x \in \extc{C}$, we have $A_1(x) \in K \cap L_K$, then $K \cap L_K \neq \emptyset$.

For every $x\in \R{n}$, if there exists $z\in K$ such that $(x,z) \in L$, then $\langle x,y_1\rangle=1-\langle z,B_1(y_1)\rangle \leq 1,\ \forall y_1\in D_1$,
$\langle x,y_2\rangle=-\langle z,B_2(y_2)\rangle \leq 0, \ \forall y_2\in D_2$, and $\langle x,y_3\rangle= -1-\langle z,B_3(y_3)\rangle\leq -1,\ \forall y_3\in D_3$. By Theorem \ref{identify points in C}, we have  $x\in C$. Hence,  $\pi(K\cap L_K) \subseteq C$.

 Since $C$ contains no lines,  we can show that for every $z\in K\cap L_K$, there exists unique $x_z\in\R{n}$ such that $(x_z,z)\in L$.  Hence, the map from $z$ to $x_z$ is a well defined affine map. Since the origin is not in $L_K$,  we can extend the map to a linear map: $\R{m}\rightarrow \R{n}$. In order to prove that $C=\pi(K\cap L_K)$,  we only  need to show that  $C\subseteq \pi(K\cap L_K)$.

For every $x\in C$, there exist $\lambda_i^1\geq 0,\  i=1,\ldots, i_1$, $\lambda_j^2\geq 0,\  j=1,\ldots, j_2$ such that
\begin{equation*}
x=\sum\limits_{i=1}^{i_1}\lambda_i^1x_i+\sum\limits_{j=1}^{j_2}\lambda_j^2y_j,\ \sum\limits_{i=1}^{i_1}\lambda_i^1=1
\end{equation*}
where $x_i \in \extc{C}$ and $y_j \in \extrc{\rpc}$.
Let $z=\sum\limits_{i=1}^{i_1}\lambda_i^1 A_1(x_i)+\sum\limits_{j=1}^{j_2}\lambda_j^2A_2(y_j)$. Since $S_C$ is  $K$-factorizable,  it is easy to check that  $z \in K\cap L_K$ and therefore, $x=\pi(z)\in \pi(K\cap L_K)$. We can deduce that $C = \pi(K\cap L_K)$.

Furthermore,  we need to show that $\rpc=\pi(K\cap 0^+L_K)$. Since $C=\pi(K\cap L_K)$, we know  that $\rpc \supseteq \pi(K\cap 0^+L_K)$. On the other hand,   for every $x\in \extrc{\rpc}$, by the definition of $L$, we claim that $A_2(x)$ is in $K\cap 0^+L_K$. Therefore, we  have  $\rpc=\pi(K\cap 0^+L_K)$.
\end{proof}


The following example shows that the $K$-factorization of $\svpci(x,y)$ for  $(x,y)\in \extc{C}\times D_i$ and $i=1,2,3$
 can not guarantee that the convex set $C$ has a $K$-lift defined by (\ref{Klift}). 
 It is necessary to consider the $K$-factorization of $\svprci(x,y)$ for all $(x,y)\in \extrc{\rpc}\times D_i$ and $i=1,2,3$ too.

\begin{example}\label{example3.14}
Let $C$ be a 3-dimensional polyhedron in $\R{3}$ defined by the following inequality:
\begin{equation*}
C=\left\{ (x_1,x_2,x_3)\in\mathbb{R}^3:
\left( \begin{array}{ccc}
\noalign{\medskip}1&\frac{\sqrt {3}}{3}&0\\
\noalign{\medskip}0&\frac{2\sqrt {3}}{3}&0\\
\noalign{\medskip}-1&\frac{\sqrt {3}}{3}&0\\
\noalign{\medskip}-1&-\frac{\sqrt {3}}{3}&0\\
\noalign{\medskip}0&-\frac{2\sqrt {3}}{3}&0\\
\noalign{\medskip}1&-\frac{\sqrt {3}}{3}&0\\
\noalign{\medskip}0&0&-1
\end{array}
\right)
\left(
\begin{array}{c}
x_1\\
x_2\\
x_3
\end{array}
\right)
\leq
\left(
  \begin{array}{c}
    1\\
    1\\
    1\\
    1\\
    1\\
    1\\
    0
  \end{array}
\right)
\right\}.
\end{equation*}
\begin{figure}
\includegraphics[width=0.6\textwidth]{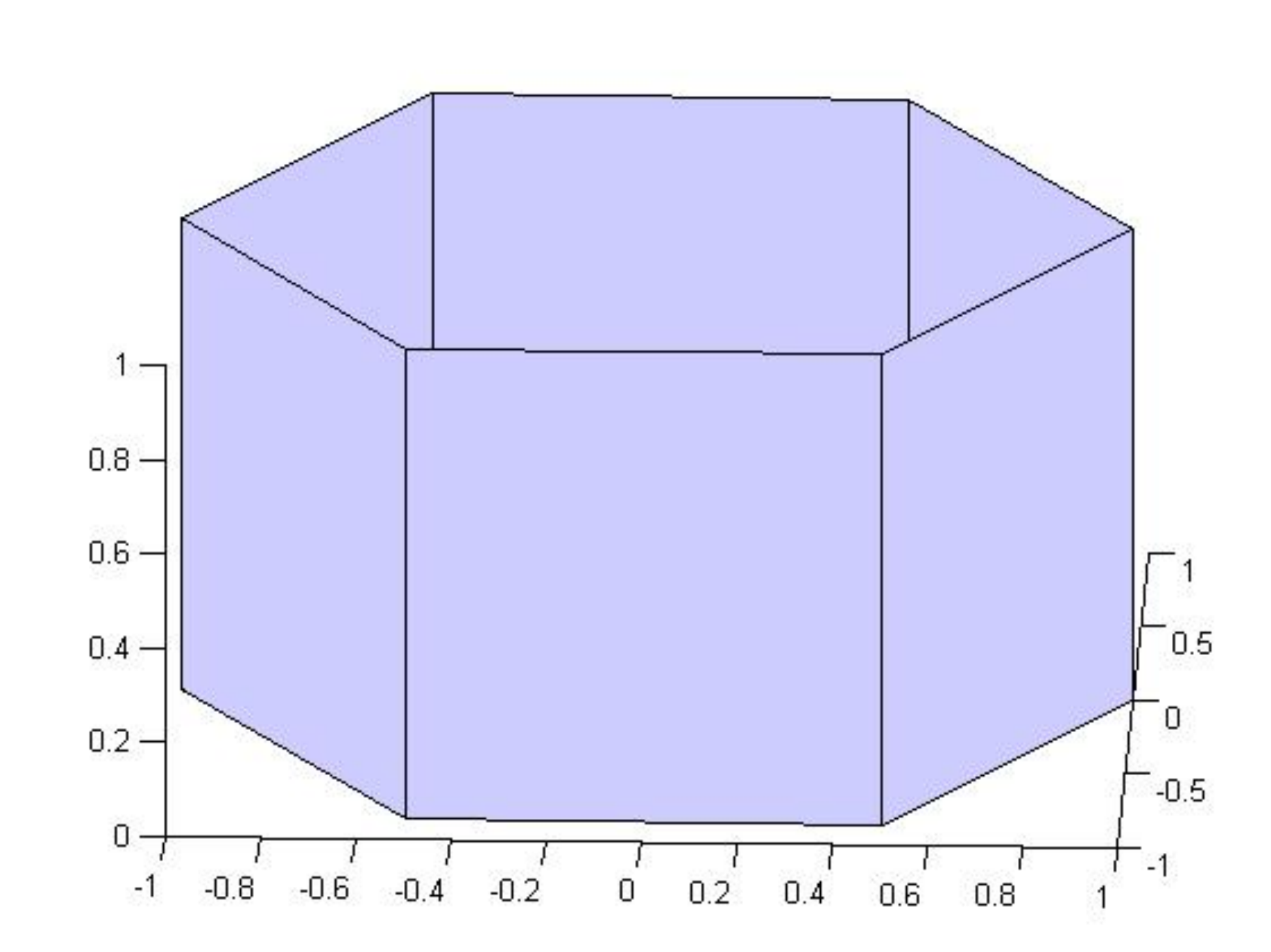}
\caption{ Example \ref{example3.14}}\label{fig3}
\end{figure}
The six vertices of $C$  are $\{(cos(i\pi/3),sin(i\pi/3),0),\ i=0,\ldots,5\}$ and $0^+C=\cone{\{(0,0,1)\}}$. According to Definition \ref{slacek},  its slack matrix   is
\begin{equation*}
S:=
\left (
\begin{array}{ccccccc}
0&0&1&2&2&1&0\\
1&0&0&1&2&2&0\\
2&1&0&0&1&2&0\\
2&2&1&0&0&1&0\\
1&2&2&1&0&0&0\\
0&1&2&2&1&0&0\\
0&0&0&0&0&0&1
\end{array}
\right).
\end{equation*}
 It has been shown in \cite[Example 2]{Gouveia2013} that the first $6 \times 6$ submatrix $S_H$ of  $S$  has  a $\RR_{+}^5$-factorization. However, we claim that the matrix $S$  does not have a $\RR_+^5$-factorization. If it does, we can assume it has the following nonnegative decomposition:
\begin{equation*}
\left(
\begin{array}{cc}
S_H&0\\
0&1
\end{array}
\right)
=
\left(
\begin{array}{cc}
A_{11}&A_{12}\\
A_{21}&A_{22}
\end{array}
\right)
\left(
\begin{array}{cc}
B_{11}&B_{12}\\
B_{21}&B_{22}
\end{array}
\right)
\end{equation*}
Since $A_{21}B_{11}+A_{22}B_{21}=0$, we have $A_{22}B_{21}=0$. We claim that  $A_{22}=0$. Otherwise, $B_{21}$ will be zero and $S_H=A_{11}B_{11}$. This contradicts to the fact that $S_H$ has no $\RR_+^4$-factorization. From $A_{21}B_{12}+A_{22}B_{22}=1$ and $A_{22}=0$, we have $B_{12}\neq 0$.  However, since $A_{11}B_{12}+A_{12}B_{22}=0$, there exists  one column of $A_{11}$ which is a zero column. Therefore, $S_H$ has a nonnegative decomposition in $\RR_{+}^4$ which is also a contradiction. Hence, according to Theorem \ref{pointedcase}, $C$ has no $\RR_{+}^5$-lift.
\end{example}

In Theorem \ref{pointedcase}, we have assumed that the full dimensional closed convex set $C$ is not a translated cone. If $C$ is a translated cone, a $K$-lift of $C$ can be defined as
\begin{equation}\label{conelift}
C=b+\pi(K\cap L),
\end{equation}
where $b\in \R{n}$ is a constant vector, $L$ is a linear space and $\pi:\R{m}\rightarrow\R{n}$ is a linear map.  In this case, we only need to  characterize $\extrc{\rpc}$. Without loss of generality, we can assume $b=0$ and $C$ is a cone.
We define  the slack operator $S_C$ as
\begin{equation}\label{conefactorizable}
S_C(x,y)=-\langle x,y\rangle  ~\text{for}~ (x,y)\in \extrc{C}\times \extrc{\cp}. 
 \end{equation}
We say that the slack operator $S_C$  is  $K$-factorizable, if there exist maps
\begin{equation*}
A: \extrc{C} \rightarrow K,  ~B: \extrc{\cp}\rightarrow \ks
\end{equation*}
such that
\begin{itemize}
\item $S_C(x,y)=\langle A(x),B(y)\rangle$ for all $(x,y)\in \extrc{C}\times \extrc{\cp}$.
\end{itemize}

\begin{theorem}\label{pointedcone}
Let $K$ be a full dimensional convex cone in $\R{m}$ and $C$  a full dimensional closed pointed convex cone in $\R{n}$.  
If $C$ has a proper $K$-lift defined by (\ref{conelift}), then $S_C$ defined by (\ref{conefactorizable}) is  $K$-factorizable. Conversely, if $S_C$ defined by (\ref{conefactorizable}) is $K$-factorizable, then $C$ has a $K$-lift defined by (\ref{conelift}).
\end{theorem}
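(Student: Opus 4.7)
The plan is to specialize and simplify the proof scheme of Theorem \ref{pointedcase} to the setting where $C$ itself is a cone, so there is a single type of slack and no translation to worry about. Since $C$ is a full dimensional pointed closed convex cone, the definition $\cp=\{y\mid\support{y}{C}\leq 1\}$ collapses to the ordinary polar cone $\{y\mid\langle y,x\rangle\leq 0,\ \forall x\in C\}$, and $\cp$ is itself a full dimensional pointed closed convex cone (full dimensionality of $C$ gives pointedness of $\cp$, and pointedness of $C$ gives full dimensionality of $\cp$). In particular both $C$ and $\cp$ are conically generated by their extreme rays, and $\extrc{\cp}$ spans $\R{n}$.

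For the forward direction, suppose $C=\pi(K\cap L)$ with $L\subset\R{m}$ a linear subspace and $L\cap\inte{K}\neq\emptyset$. For each $x\in\extrc{C}$ I pick some $w\in K\cap L$ with $\pi(w)=x$ and set $A(x):=w$. For each $y\in\extrc{\cp}$, the inequality $\langle y,\pi(w)\rangle\leq 0$ on $K\cap L$ shows that $-\pi^*(y)\in(K\cap L)^*$. Properness is a Slater condition which forces $\ks+L^\bot$ to be closed and equal to $(K\cap L)^*$, so I can decompose $-\pi^*(y)=z+v$ with $z\in\ks$ and $v\in L^\bot$ and set $B(y):=z$. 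The identity $\langle A(x),B(y)\rangle=\langle A(x),-\pi^*(y)-v\rangle=-\langle\pi(A(x)),y\rangle=-\langle x,y\rangle$, using $A(x)\in L\perp v$, verifies $K$-factorizability.

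For the converse, given a factorization $(A,B)$ of $S_C$, I introduce the linear subspace
\[L'=\{(x,w)\in\R{n}\times\R{m}\mid -\langle x,y\rangle=\langle w,B(y)\rangle,\ \forall y\in\extrc{\cp}\},\]
and let $L_K$ be its projection onto the $w$-component. The factorization gives $(x,A(x))\in L'$ for every $x\in\extrc{C}$. Pointedness of $C$ forces $\extrc{\cp}$ to span $\R{n}$, so the defining relations of $L'$ determine $x$ uniquely from $w$; this produces a well-defined linear map $\pi:L_K\to\R{n}$, which I extend to $\R{m}$ by setting it to zero on a complement. The inclusion $C\subseteq\pi(K\cap L_K)$ follows by writing $x=\sum\lambda_ix_i$ with $x_i\in\extrc{C}$, $\lambda_i\geq 0$, and lifting to $w=\sum\lambda_iA(x_i)\in K\cap L_K$; conversely, any $w\in K\cap L_K$ gives $-\langle x,y\rangle=\langle w,B(y)\rangle\geq 0$ for every $y\in\extrc{\cp}$, and since these extreme rays conically generate $\cp$, we conclude $x\in(\cp)^*=C$ by bipolarity.

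The main obstacle is the closedness identity $\ks+L^\bot=(K\cap L)^*$ used in the forward direction, which depends on the Slater/properness assumption; this is precisely the strong-duality step already employed in \cite[Theorem~1]{Gouveia2013} and in Theorem \ref{pointedcase}, and is where the hypothesis $L\cap\inte{K}\neq\emptyset$ is essential. The converse is self-contained and only requires the spanning property of $\extrc{\cp}$ inherited from the pointedness of $C$, together with bipolarity of closed convex cones.
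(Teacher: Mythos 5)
Your proposal is correct and follows essentially the same route as the paper: the converse direction (constructing $L'$, projecting to $L_K$, using that pointedness of $C$ makes $\extrc{\cp}$ span $\R{n}$ to get a well-defined linear $\pi$, and concluding via $C^{oo}=C$) matches the paper's short proof almost line for line, while the forward direction, which the paper omits by reference to Theorem \ref{pointedcase}, is filled in by you with exactly the strong-duality/Slater decomposition $(K\cap L)^*=\ks+L^\bot$ that the paper's cited argument uses.
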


The proof that $S_C$ is $K$-factorizable if $C$ has a proper $K$-lift is similar to the one given for Theorem \ref{pointedcase}. We omit the proof here. Below we give a short proof to show that   $C$ has a $K$-lift if $S_C$ is $K$-factorizable.
\begin{proof}
Suppose $S_C$ is $K$-factorizable, we construct the linear space
\begin{align*}
L=\{(x,z)\mid -\langle x,y\rangle=\langle z,B(y)\rangle,\ \forall y\in \extrc{\cp}\}.
\end{align*}

Let $L_K$ be the projection of $L$ onto the second component $z$. Since $C$ contains no lines, for every $z\in L_K$, there exists  unique $x_z\in\R{n}$ such that $(x_z,z)\in L$. Hence, we can define a linear map  $\pi$: $L_K\rightarrow x_z$. Since $L$ is a linear space, we can extend $\pi$ to a linear map: $\R{m}\rightarrow\R{n}$.

For every $x \in \RR^n$, if there exists $z \in K$ such that $(x,z)$ is in $L$,
then $\langle x,y\rangle\leq 0$ for all $y\in \extrc{\cp}$. Hence  $x\in C^{oo}=\cl{C}=C$ since $C$ is a closed convex set.  We have $\pi(K\cap L) \subseteq C$. On the other hand, since $S_C$ is $K$-factorizable, for every $x \in \extrc{C}$, $(x, A(x)) \in L$,  hence  $C \subseteq \pi(K\cap L)$. The proof is completed.
\end{proof}

%

\subsubsection{$C$ contains  lines}\label{containline}

When $C$ is a full dimensional closed convex set containing  lines, Definition \ref{KKfactor} and Theorem \ref{pointedcaseadd} can be generalized without any change.  However, when $C$ contains lines, it has no extreme points  and $\rpc$ contains no extreme rays,   Definition \ref{Kfactor} and Theorem \ref{pointedcase} need to be  adjusted properly.

Let $\linc$  denote the lineality space of $C$ and  $\{l_1\ldots,l_s\}$ be an orthogonal basis of $\linc$.  The convex set
containing lines can be decomposed as
\begin{equation}\label{decomposelinear}
C=\pointed+\linc,
\end{equation}
where   $\pointed=C\cap \linco$ is a closed convex set containing  no lines and $\linco$ is the orthogonal complement of $\linc$.

\begin{lemma}\label{affinehull}
$\linco$ is  the affine hull of $\cp$.
\end{lemma}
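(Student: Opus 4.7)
The plan is to prove two inclusions, $\cp\subseteq \linco$ and $\linco \subseteq \aff{\cp}$, using the characterization of $\linc$ as $\rpc \cap (-\rpc)$ together with Corollary \ref{dualpolar}.

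First, I would establish $\cp \subseteq \linco$. Fix any $x \in \cp$ and any $l \in \linc$. Since $\linc$ is the lineality space of $C$, for every $y \in C$ and every $\lambda \in \mathbb{R}$ we have $y + \lambda l \in C$, hence $\langle x, y + \lambda l\rangle \leq 1$. Letting $\lambda \to \pm \infty$ forces $\langle x, l\rangle = 0$. Since $l \in \linc$ was arbitrary, $x \in \linco$. Note also that $0 \in \cp$ trivially, so $\aff{\cp}$ is in fact a linear subspace of $\linco$.

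Next I would prove the reverse inclusion $\linco \subseteq \aff{\cp}$ by contradiction. Suppose $\aff{\cp} \subsetneq \linco$. Since $\aff{\cp}$ is a linear subspace of $\linco$, there exists a nonzero $v \in \linco$ with $\langle v, x\rangle = 0$ for every $x \in \cp$. By homogeneity of the inner product, this extends to $\langle v, x\rangle = 0$ for every $x \in \cone{\cp}$. By Theorem \ref{small lemma}, $\cone{\cp} = \domsupport{\cdot}{C}$, so $\langle v, x\rangle \leq 0$ and $\langle -v, x\rangle \leq 0$ hold for every $x$ in the barrier cone of $C$. Corollary \ref{dualpolar} then yields $v, -v \in \rpc$, so $v \in \rpc \cap (-\rpc) = \linc$. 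Combined with $v \in \linco$, this gives $v \in \linc \cap \linco = \{0\}$, contradicting $v \neq 0$.

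I expect no serious obstacle here; the only slightly delicate point is justifying the passage from $\langle v, x\rangle = 0$ on $\cp$ to $\langle v, x\rangle \leq 0$ on the entire barrier cone, but this is immediate from the identification $\cone{\cp} = \domsupport{\cdot}{C}$ proved in Theorem \ref{small lemma}. The proof is therefore short once the recession cone / barrier cone duality is invoked at the right moment.
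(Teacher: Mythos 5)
Your proof is correct, but it takes a genuinely different route from the paper's. The paper decomposes $\cp=\pointed^o\cap\linco$ (using $C=\pointed+\linc$), shows $\cl{\cone{\pointed^o}}=(0^+\pointed)^o$ is full dimensional because $0^+\pointed$ is pointed, observes $\cone{\pointed^o}\supseteq\linc$, and concludes that $\cp$ has nonempty interior relative to $\linco$, whence its affine hull is all of $\linco$. You instead avoid the decomposition entirely and run a direct duality argument on $C$ itself: the easy inclusion $\cp\subseteq\linco$ by letting $\lambda\to\pm\infty$ along lineality directions, then, for the reverse inclusion, any nonzero $v\in\linco$ orthogonal to $\aff{\cp}$ annihilates $\cone{\cp}=\domsupport{x}{C}$ (Theorem \ref{small lemma}), so $\pm v$ lie in the polar of the barrier cone, which is $\rpc$ by Corollary \ref{dualpolar}; hence $v\in\rpc\cap(-\rpc)=\linc$, forcing $v=0$. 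Both arguments ultimately rest on the same two facts ($\cone{\cp}$ equals the barrier cone, and barrier-cone/recession-cone polarity), but yours applies Theorem \ref{small lemma} to the full dimensional set $C$, where its hypothesis is literally satisfied, rather than to $\pointed$, which is not full dimensional in $\R{n}$; this sidesteps the paper's slightly delicate interior-counting step and the implicit identity $\cone{\pointed^o\cap\linco}=\cone{\pointed^o}\cap\linco$. Your observation that $0\in\cp$ makes $\aff{\cp}$ a linear subspace is the right way to produce the separating vector $v$ cleanly.
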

\begin{proof}
Since $C=\pointed+\linc$, we have $\cp=C_0^o\cap \linco$. The convex set  $C_0$ contains no lines,  then $0^+C_0$ contains no lines. By Corollary \ref{dualpolar}  and Theorem \ref{small lemma}, we claim $\cl{\cone{C_0^o}}=(0^+C_0)^o$. Since $(0^+C_0)^o$ contains an interior,   it is clear that  $\cone{C_0^o}$ contains an  interior. As $C_0$ is in $\linco$, $\cone{C_0^o}$ contains $\linc$. Hence  $\cone{\cp}=\cone{C_0^o}\cap \linco$ has an interior in $\linco$. Furthermore,  $\cp=C_0^o\cap \linco$ has an interior in $\linco$ too. Hence  $\linco$ is the affine hull of $\cp$.
\end{proof}

We define  the slack operator $S_C$ of a full dimensional closed convex set $C$ containing lines as
\begin{equation}\label{slackop2}
S_C=\left\{ \begin{array}{ll}
\svcone(x,y)=1-\langle x,y\rangle  &~\text{for}~ (x,y)\in \extc{\pointed}\times D_1, \\
\svctwo(x,y)=-\langle x,y\rangle  &~\text{for}~ (x,y)\in \extc{\pointed}\times D_2,\\
\svcthree(x,y)=-1-\langle x,y\rangle  &~\text{for}~  (x,y)\in \extc{\pointed}\times D_3,\\
\svrci(x,y)=-\langle x,y\rangle  &~\text{for}~ (x,y)\in \extrc{\rc}\times D_i, i=1,2,3,\\
\svl(x,y)=\langle x,y\rangle  & ~\text{for}~ (x,y)\in  \{  l_1,\ldots,  l_s\}\times \{  l_1,\ldots,  l_s\}.
 \end{array}
 \right.
 \end{equation}

\begin{define}\label{Kfactorl}
Let $K\subset\R{m}$ be a full dimensional closed convex cone and $C\subset\R{n}$  a full dimensional non-compact closed convex set containing lines. We say that the slack operator $S_C$ defined by (\ref{slackop2})
is $K$-factorizable, if there exist maps
\begin{align*}
&A_1:\extc{\pointed}\rightarrow K, \  A_2:\extrc{\rc}\rightarrow K,\  A_3: \{  l_1,\ldots,  l_s\}\rightarrow K,\\
&B_1: D_1\rightarrow \ks,\  B_2: D_2\rightarrow \ks,\  B_3: D_3\rightarrow \ks,\  F:\{  l_1,\ldots,  l_s\}\rightarrow \R{m}.
\end{align*}
such that
\begin{itemize}
\item $\svci(x,y)=\langle A_1(x),B_i(y)\rangle$ for all $(x,y)\in \extc{\pointed}\times D_i$ and $i=1,2,3$,
\item $\svrci(x,y)=\langle A_2(x),B_i(y)\rangle$ for all $(x,y)\in \extrc{\rc}\times D_i$ and $i=1,2,3$,
\item $\svl(x,y)=\langle A_3(x),F(y)\rangle$ for all $(x,y)\in \{ l_1,\ldots, l_s\}\times \{l_1,\ldots,l_s\}$,
\item $\langle A_3(x),B_i(y)\rangle=0$ for all $(x,y)\in\{ l_1,\ldots,  l_s\} \times D_i$ and $i=1,2,3$,
\item $\langle A_1(x),F(y)\rangle=0$ for all $(x,y) \in \extc{\pointed}\times \{ l_1,\ldots, l_s\}$,
\item $\langle A_2(x),F(y)\rangle=0$ for all $(x,y) \in \extrc{\rc} \times \{ l_1,\ldots, l_s\}$.
\end{itemize}
\end{define}

\begin{theorem}\label{unpointedcase}
Let $K$ be a full dimensional closed convex cone in $\R{m}$. Assume $C$ is a full dimensional closed convex set in $\R{n}$ which can be decomposed as (\ref{decomposelinear}) and $\pointed$ is not a translated cone. If $C$ has a proper  $K$-lift defined by ($\ref{Klift}$), then the  slack operator $S_C$ defined by ($\ref{slackop2}$) is  $K$-factorizable. Conversely, if $S_C$ defined by ($\ref{slackop2}$) is  $K$-factorizable, then $C$ has a  $K$-lift defined by ($\ref{Klift}$).
\end{theorem}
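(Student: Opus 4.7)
The plan is to mirror the two-step proof of Theorem \ref{pointedcase}, with the new maps $A_3$ and $F$ of Definition \ref{Kfactorl} used to absorb the lineality space $\linc$ of $C$ and to verify the extra orthogonality relations coming from the decomposition $\R{n}=\linco\oplus\linc$.

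For the forward direction, write the proper $K$-lift as $L=w_0+\lin$ with $w_0\in\inte{K}$ and $0^+C=\pi(K\cap\lin)$. I would define $A_1,A_2,B_1,B_2,B_3$ exactly as in Theorem \ref{pointedcase}: $A_1(x),A_2(w)$ are preimages under $\pi$ in $K\cap L$ respectively $K\cap\lin$, while $B_i(y)=z-\pi^*(y)$ with $z\in\lin^\perp\cap(\ks+\pi^*(y))$ and $\langle w_0,z\rangle=c_i$ for $c_1=1,\,c_2=0,\,c_3=-1$. For each basis vector $l_i$, since $l_i\in\linc\subseteq 0^+C$, choose $A_3(l_i)\in K\cap\lin$ with $\pi(A_3(l_i))=l_i$, and set $F(l_j):=\pi^*(l_j)$. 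Then $\svl(l_i,l_j)=\langle A_3(l_i),F(l_j)\rangle=\langle\pi(A_3(l_i)),l_j\rangle=\langle l_i,l_j\rangle$. The orthogonality $\langle A_3(l_i),B_k(y)\rangle=0$ follows because $A_3(l_i)\in\lin$ while $z\in\lin^\perp$, combined with $\langle l_i,y\rangle=0$ for $y\in D_k\subseteq\linco$ (by Lemma \ref{affinehull}); and $\langle A_1(x),F(l_j)\rangle=\langle A_2(w),F(l_j)\rangle=0$ because $\pointed,\rc\subseteq\linco$.

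For the converse, set
\begin{align*}
L:=\{(x,z)\in\R{n}\times\R{m}:~
&1-\langle x,y_1\rangle=\langle z,B_1(y_1)\rangle\ \forall y_1\in D_1,\\
&-\langle x,y_2\rangle=\langle z,B_2(y_2)\rangle\ \forall y_2\in D_2,\\
&-1-\langle x,y_3\rangle=\langle z,B_3(y_3)\rangle\ \forall y_3\in D_3,\\
&\langle x,l_i\rangle=\langle z,F(l_i)\rangle,\ i=1,\ldots,s\},
\end{align*}
and let $L_K$ be its projection onto the second coordinate. If $(x,0)\in L$, then the equalities against $l_i$ force $x\in\linco$, and the remaining equalities place $x$ into the hypothesis of Theorem \ref{identify one extreme} applied to $\pointed$ inside its ambient space $\linco$, forcing $\pointed$ to be a translated cone---contradicting the assumption. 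So $0\notin L_K$. Since $D_1\cup D_2\cup D_3$ spans $\linco$ (because $\cp$ has full dimension in $\linco$ by Lemma \ref{affinehull}) and $\{l_1,\ldots,l_s\}$ spans $\linc$, the defining equations determine $x_z$ uniquely for each $z\in L_K$, so $\pi:z\mapsto x_z$ is a well-defined affine map on $L_K$ that extends to a linear map $\R{m}\to\R{n}$. The inclusion $\pi(K\cap L_K)\subseteq C$ then follows from $B_i(y)\in\ks$ together with Theorem \ref{identify points in C}.

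For $C\subseteq\pi(K\cap L_K)$ (and $0^+C=\pi(K\cap 0^+L_K)$), decompose any $x\in C$ as $x_0+x_\ell\in\pointed+\linc$, write $x_0=\sum_i\lambda_i^1 x_i+\sum_j\lambda_j^2 w_j$ over $\extc{\pointed}$ and $\extrc{\rc}$ and $x_\ell=\sum_k\mu_k l_k$, and assemble $z:=\sum_i\lambda_i^1 A_1(x_i)+\sum_j\lambda_j^2 A_2(w_j)+\sum_k\mu_k A_3(l_k)$; the factorization and orthogonality identities of Definition \ref{Kfactorl} force $(x,z)\in L$ by direct calculation. The main obstacle is that while the $\lambda_i^1,\lambda_j^2\ge 0$, the coefficients $\mu_k$ need not be nonnegative, so $\sum_k\mu_k A_3(l_k)$ may escape $K$. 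Handling this relies on the fact that both $l_k$ and $-l_k$ lie in $\linc\subseteq 0^+C=\pi(K\cap\lin)$, which in the forward construction lets $A_3(l_k)$ be chosen inside the lineality $K\cap(-K)$ of $K$, so that $\pm A_3(l_k)\in K$ and arbitrary real combinations remain in the cone. This symmetric placement of $A_3$ on the lineality basis is the genuinely new ingredient relative to Theorem \ref{pointedcase}, and is the step I expect to demand the most care.
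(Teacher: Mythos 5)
Your forward direction reproduces the paper's construction exactly ($A_3(l_i)$ a preimage of $l_i$ in $K\cap\lin$, $F(l_i)=\pi^*(l_i)$, orthogonality from $z\in\lin^\perp$ and $D_k\subseteq\linco$), and the first three steps of your converse (the affine space $L$, $0\notin L_K$ via Theorem \ref{identify one extreme}, uniqueness of $x_z$ via Lemma \ref{affinehull}, and $\pi(K\cap L_K)\subseteq C$) also coincide with the paper's argument; all of that is correct. The problem is the final step, and you have located the obstruction precisely --- but your proposed repair fails. In the converse direction $A_3$ is \emph{given} as part of an arbitrary factorization, so you cannot ``choose'' it inside $K\cap(-K)$; and for the cones one actually cares about ($\mathbb{R}_+^m$, $\mathcal{S}_+^m$) $K$ is pointed, so $K\cap(-K)=\{0\}$ and placing $A_3(l_k)$ there would force $\svl(l_k,l_k)=\langle A_3(l_k),F(l_k)\rangle=0$, contradicting $\svl(l_k,l_k)=\|l_k\|^2>0$. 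Note that a proper lift of a line-containing $C$ by a pointed $K$ is entirely possible: the line through $\pm l_k$ arises from two \emph{different} points of $K\cap\lin$ mapping to $l_k$ and $-l_k$, not from a line inside $K$, so even the forward construction never produces the symmetric placement your argument needs.

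The gap is genuine and, as far as I can see, is not closed by the paper either (its proof ends with ``by the same method used in proving Theorem \ref{pointedcase}'', which does not address negative coefficients on the lineality basis). Indeed Definition \ref{Kfactorl} records a $K$-point over each $l_k$ but nothing over $-l_k$, and this is fatal: take $C=[0,1]\times\mathbb{R}\subset\mathbb{R}^2$, so $\pointed=[0,1]\times\{0\}$ (not a translated cone), $l_1=(0,1)$, $D_1=\{(1,0)\}$, $D_2=\{(-1,0)\}$, $D_3=\emptyset$, $\extrc{\rc}=\emptyset$. With $K=\mathbb{R}_+^3$ and $e_1,e_2,e_3$ the standard unit vectors, setting $A_1((0,0))=e_1$, $A_1((1,0))=e_2$, $A_3(l_1)=e_3$, $B_1((1,0))=e_1$, $B_2((-1,0))=e_2$, $F(l_1)=e_3$ satisfies every condition of Definition \ref{Kfactorl}, yet $C$ has no $\mathbb{R}_+^3$-lift: $\mathbb{R}_+^3\cap L$ is always a line-free polyhedron, whose image is a point or segment-plus-ray if $\dim L\le 2$ (a two-dimensional image would make $\pi|_L$ injective, so the image would again be line-free), and is a cone if $L=\mathbb{R}^3$, so it can never equal $[0,1]\times\mathbb{R}$. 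Consequently no argument along your (or the paper's) lines can succeed without strengthening the factorization data, e.g.\ by requiring $A_3$ to be defined on $\{\pm l_1,\ldots,\pm l_s\}$ with $\langle A_3(\pm l_i),F(l_j)\rangle=\pm\langle l_i,l_j\rangle$ and the corresponding orthogonality relations; with that extra data your element $z$, built using $A_3(-l_k)$ whenever $\mu_k<0$, does lie in $K$ and the rest of your computation goes through.
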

\begin{proof}
Suppose $C$ has a proper $K$-lift, then we  set  $L=w_0+\lin$ in $\R{m}$ where $\lin$ is a linear subspace, $w_0 \in \inte{K}$ and $\pi:\R{m}\rightarrow\R{n}$ is a linear map such that $C=\pi(K\cap L), ~ 0^+C=\pi (K\cap 0^+L)$. Since  $ 0^+L=\lin$, we have $0^+C=\pi(K\cap \lin)$. We need to construct maps $A_1, A_2, A_3$, $B_1, B_2, B_3$ and $F$ that factorize the  slack operator $S_C$ from the $K$-lift.
  We can define $A_1,A_2,B_1,B_2,B_3$ by the same way used in the proof of Theorem \ref{pointedcase}. For every $l_i$, $i=1,\dots,s$, there exists a point $w_i\in K\cap\lin$  such that $\pi(w_i)=l_i$. So we define $A_3(l_i):=w_i$ for $i=1,\dots,s$.
Furthermore, we define $F(l_i):=\pi^*(l_i)$ for $i=1,\dots,s$.

The equalities for $\svci$, $\svrci$, $i=1,\ldots,3$ in Definition \ref{Kfactorl} can be checked by the
  same method used in the proof  of Theorem \ref{pointedcase}.
For  each $x,y\in\{ l_1,\ldots, l_s\}$, we  have
\begin{equation*}
\langle x,y\rangle=\langle \pi(A_3(x)),y\rangle=\langle A_3(x),F(y)\rangle.
\end{equation*}
For  each $x\in\{ l_1,\ldots, l_s\}$ and $y\in D_i$, we  have
\begin{equation*}
\langle A_3(x),B_i(y)\rangle=\langle A_3(x),z-\pi^*(y)\rangle=-\langle \pi(A_3(x)),y\rangle=-\langle x,y\rangle=0.
\end{equation*}
For each  $x\in \extc{\pointed}$ and  $y\in\{ l_1,\ldots, l_s\}$, we have
\begin{align*}
\langle A_1(x),F(y)\rangle=\langle \pi(A_1(x)), y\rangle=\langle x,y\rangle=0.
\end{align*}
For each  $x\in \extrc{\rc}$ and  $y\in\{ l_1,\ldots, l_s\}$, we have
\begin{align*}
\langle A_2(x),F(y)\rangle=\langle \pi(A_2(x)), y\rangle=\langle x,y\rangle=0.
\end{align*}
Therefore, $S_C$ is $K$-factorizable.

Suppose $S_C$ is $K$-factorizable.
 We need to construct an affine space $L$:
\begin{align*}
L=&\{(x,z)\in\R{n}\times \R{m}\mid x=x_1+x_2 \text{ such that } \ x_1\in \linco \text{ and } x_2\in \linc,\\
&1-\langle x_1,y_1\rangle=\langle z,B_1(y_1)\rangle,\ \forall y_1\in D_1,\  -\langle x_1,y_2\rangle=\langle z,B_2(y_2)\rangle,\ \forall y_2\in D_2,\\
&-1-\langle x_1,y_3\rangle=\langle z,B_3(y_3)\rangle,\ \forall y_3\in D_3,\  \langle x_2,l_i \rangle=\langle z, F(l_i)\rangle,\ \forall i=1,\ldots,s\}.
\end{align*}
Let $L_K$ be the projection of $L$ onto the second component $z$.

We need to show that $0\notin L_K$. If $0\in L_K$, there exists $x=x_1+x_2$ such that $1-\langle x_1,y_1\rangle=0,\  \forall y_1\in D_1$, $-\langle x_1,y_2\rangle=0,\ \forall y_2\in D_2$, $-1-\langle x_1,y_3\rangle=0,\ \forall y_3\in D_3$. By Theorem \ref{identify one extreme}, $C_0$ is a translated convex cone and this leads to a contradiction. Moreover, $K \cap L_K \neq \emptyset$ since for each $x \in \extc{C_0}$, we have $A_1(x) \in K \cap L_K$.

Now we  prove that for each $z\in K\cap L_K$, there exists  unique $x\in \R{n}$ such that $(x,z)\in L$. If for some $z\in K\cap L_K$, there exist two different points $x^1=x_1^1+x_2^1$ and $x^2=x_1^2+x_2^2$ such that both $(x^1,z)$ and $(x^2,z)$ are in $L$. Because $\langle x_1^1-x_1^2,y\rangle=0$ for $y\in D_1\cup D_2\cup D_3$, 
we have  $\langle x_1^1-x_1^2,y\rangle=0$ for $y\in \cp$. According to Lemma \ref{affinehull},
$\linco$ is  the affine hull of $\cp$, hence  $x_1^1-x_1^2\in\linc$. Since  $x_1^1-x_1^2$ is also in $\linco$,  we have $x_1^1-x_1^2=0$. Furthermore,  because  $\langle x_2^1-x_2^2,l_i\rangle =0$, $1\leq i\leq s$ and $x_2^1-x_2^2\in \linc$, we also have $x_2^1-x_2^2=0$.  Hence, the map from $z$ to $x_z$ is a well defined affine map. Since the origin is not in $L_K$,  we can extend it to a linear map: $\R{m}\rightarrow \R{n}$.
By the same method used in proving Theorem \ref{pointedcase}, we can show $C=\pi(K\cap L_K)$ and $\rpc=\pi(K\cap 0^+L_K)$.
\end{proof}

When $C$ is a closed translated convex cone that contains lines, $C$  can also be decomposed as (\ref{decomposelinear}) and we have results similar to those given in Theorem \ref{pointedcone}. The slack operator $S_C$ of a  convex set $C$ containing lines is defined as

\begin{equation}\label{conefactorizablel}
S_C=\left\{ \begin{array}{ll}
\svc(x,y)=-\langle x,y\rangle  &~\text{for}~ (x,y)\in \extrc{\pointed}\times \extrc{\cp}, \\
\svl(x,y)=-\langle x,y\rangle  &~\text{for}~ (x,y)\in \{l_1,\ldots, l_s\}\times \{l_1,\ldots,l_s\}.\\
 \end{array}
 \right.
 \end{equation}

\begin{define}
We say that the slack operator $S_C$ defined by (\ref{conefactorizablel}) is $K$-factorizable, if there exist maps
\begin{align*}
& A_2:\extrc{\pointed}\rightarrow K,\  A_3: \{  l_1,\ldots,  l_s\}\rightarrow K,\\
&B: \extrc{\cp} \rightarrow K^*, \  F:\{  l_1,\ldots, l_s\}\rightarrow \R{m}.
\end{align*}
such that
\begin{itemize}
\item $\svc(x,y)=\langle A_2(x),B(y)\rangle$ for all $(x,y)\in \extrc{\pointed}\times \extrc{\cp}$,
\item $\svl(x,y)=\langle A_3(x),F(y)\rangle$ for all $(x,y)\in \{l_1,\ldots, l_s\}\times \{l_1,\ldots,l_s\}$,
\item $\langle A_2(x),F(y)\rangle=0$ for all $(x,y) \in \extrc{\pointed} \times \{ l_1,\ldots, l_s\}$,
\item $\langle A_3(x),B(y)\rangle=0$ for all $\{ l_1,\ldots,  l_s\} \times \extrc{\cp}$.
\end{itemize}
\end{define}

\begin{theorem}\label{conecontline}
Let $K$ be a full dimensional convex cone in $\R{m}$ and $C$ is a full dimensional closed translated convex cone in $\R{n}$ that contains lines and $C$ can be decomposed as (\ref{decomposelinear}).  If $C$ has a proper $K$-lift defined by (\ref{conelift}), then $S_C$ defined by (\ref{conefactorizablel}) is  $K$-factorizable. Conversely, if $S_C$ defined by (\ref{conefactorizablel}) is $K$-factorizable, then $C$ has a $K$-lift defined by (\ref{conelift}).
\end{theorem}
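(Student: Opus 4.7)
My plan is to combine the cone-style argument of Theorem \ref{pointedcone} with the lineality-space handling from the proof of Theorem \ref{unpointedcase}. The decomposition $C = \pointed + \linc$ together with Lemma \ref{affinehull} (which identifies $\linco$ as the affine hull of $\cp$) is the main structural ingredient: it lets the two slack blocks $\svc$ and $\svl$ be treated separately, while the two cross-orthogonality identities in Definition couple the pointed and lineality parts cleanly along $\linco \oplus \linc$.

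For the forward direction I would start from a proper $K$-lift $C = \pi(K\cap L)$ with $L$ a linear subspace through some $w_0 \in L \cap \inte{K}$. For each $x \in \extrc{\pointed}$ and each basis vector $l_i$, pick preimages $A_2(x), A_3(l_i) \in K \cap L$ with $\pi(A_2(x)) = x$ and $\pi(A_3(l_i)) = l_i$. For each $y \in \extrc{\cp}$, apply LP duality exactly as in the proofs of Theorems \ref{pointedcone} and \ref{unpointedcase} to produce $z_y \in L^\perp \cap (\ks + \pi^*(y))$ and set $B(y) := z_y - \pi^*(y) \in \ks$; for each $l_i$ set $F(l_i) := \pi^*(l_i)$. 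The identities for $\svc$ and $\svl$ are direct inner-product computations. The two cross-orthogonality identities both reduce, through $\langle A_j(u), \pi^*(v)\rangle = \langle u, v\rangle$, to the single fact that $\langle u, l_i\rangle = 0$ for $u \in \extrc{\pointed}$ (since $\pointed \subset \linco$) and for $u \in \extrc{\cp}$ (since $\cp \subset \linco$ by Lemma \ref{affinehull}).

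For the reverse direction, given the factorization I construct the linear subspace
\begin{align*}
L = \{(x,z) \in \R{n}\times\R{m} \mid{}& x = x_1 + x_2,\ x_1 \in \linco,\ x_2 \in \linc,\\
& -\langle x_1, y\rangle = \langle z, B(y)\rangle,\ \forall\, y \in \extrc{\cp},\\
& \langle x_2, l_i\rangle = \langle z, F(l_i)\rangle,\ \forall\, i=1,\ldots,s\},
\end{align*}
and let $L_K$ be its projection onto the $z$-coordinate. Uniqueness of $x_z$ for each $z \in L_K$ proceeds exactly as in the proof of Theorem \ref{unpointedcase}: a difference in the $\linco$-component is $\cp$-orthogonal, hence lies in $\linc$ by Lemma \ref{affinehull} and thus in $\linco \cap \linc = \{0\}$; a difference in the $\linc$-component is orthogonal to the basis $\{l_1,\ldots,l_s\}$ of $\linc$ and is also zero. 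This gives a well-defined linear map $\pi : L_K \to \R{n}$ that extends linearly to $\R{m}$. The inclusion $\pi(K \cap L_K) \subseteq C$ follows from $C^{oo} = C$ (valid since $C$ is a closed convex cone containing $0$): for $z \in K \cap L_K$, $\pi(z)$ satisfies $\langle \pi(z), y\rangle \leq 0$ for every $y \in \extrc{\cp}$ and hence for every $y \in \cp$. For the reverse inclusion, any $x \in C$ decomposes as $x = \sum_j \lambda_j x_j + \sum_i \beta_i l_i$ with $\lambda_j \geq 0$, $x_j \in \extrc{\pointed}$, $\beta_i \in \R{}$, and I take $z := \sum_j \lambda_j A_2(x_j) + \sum_i \beta_i A_3(l_i)$; the four factorization identities then certify $(x,z) \in L$.

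The main obstacle will be showing that this $z$ actually lies in $K$: the coefficients $\beta_i$ can be negative, whereas $K$ is only closed under nonnegative combinations, so $\beta_i A_3(l_i)$ need not individually sit in $K$. The resolution uses the orthogonality identity $\langle A_3(l_i), B(y)\rangle = 0$ together with the fact that each $l_i$ plays a symmetric role to $-l_i$ in the slack data (both yielding the same $\svl$-values up to sign), which places each $A_3(l_i)$ in the lineality subspace of $K\cap L_K$ so that $\pm A_3(l_i) \in K$. The recession-cone identity $\rpc = \pi(K \cap 0^+L_K)$ is then automatic, since $\rpc = C$ because $C$ is a cone and $0^+L_K = L_K$ because $L_K$ is a linear subspace.
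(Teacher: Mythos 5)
Your overall plan --- splicing the cone argument of Theorem \ref{pointedcone} together with the lineality-space bookkeeping of Theorem \ref{unpointedcase} --- is exactly what the paper intends (it gives no proof beyond saying those two arguments combine), and your forward direction goes through essentially as written, up to the sign convention in (\ref{conefactorizablel}): since $\svl(x,y)=-\langle x,y\rangle$ there, you need $F(l_i):=-\pi^*(l_i)$ rather than $\pi^*(l_i)$. The real problem is in the converse, at precisely the step you flag. Your proposed resolution --- that the identities $\langle A_3(l_i),B(y)\rangle=0$ together with a ``symmetric role'' of $l_i$ and $-l_i$ force $A_3(l_i)$ into the lineality subspace of $K\cap L_K$ --- is not available from the data. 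The factorization supplies a single point $A_3(l_i)\in K$; nothing in the definition gives $-A_3(l_i)\in K$, and the slack operator is evaluated only on the chosen basis $\{l_1,\ldots,l_s\}$, never on $\{-l_1,\ldots,-l_s\}$, so no element of the data witnesses the direction $-l_i$. Worse, when $K$ is pointed the lineality space of $K\cap L_K$ is $\{0\}$, so your claim would force $A_3(l_i)=0$, contradicting $\langle A_3(l_i),F(l_i)\rangle=-\langle l_i,l_i\rangle\neq 0$.

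The gap cannot be closed within the stated hypotheses. Take $C=\RR_+\times\RR\subset\R{2}$ and $K=\RR_+^2$, so that $\pointed=\RR_+\times\{0\}$, $\linc=\{0\}\times\RR$, $l_1=(0,1)$ and $\cp=\{(y_1,0)\mid y_1\le 0\}$. The assignment $A_2((1,0))=(1,0)$, $B((-1,0))=(1,0)$, $A_3((0,1))=(0,1)$, $F((0,1))=(0,-1)$ satisfies every identity required for $K$-factorizability of (\ref{conefactorizablel}), yet $C$ has no $\RR_+^2$-lift in the sense of (\ref{conelift}): for any linear subspace $L\subseteq\R{2}$ the set $\RR_+^2\cap L$ is conically generated by at most two vectors, so $\pi(\RR_+^2\cap L)$ is never a closed half-plane. (The degenerate case $C=\RR\subset\R{1}$, $K=\RR_+^1$, $A_3(1)=1$, $F(1)=-1$ is even simpler.) So the converse implication fails for the definitions as written; to make your construction of $L$ and the two inclusions work, the factorization data on the lineality part must be strengthened --- e.g.\ by defining $A_3$ on $\{\pm l_1,\ldots,\pm l_s\}$ or by requiring $\pm A_3(l_i)\in K$ --- after which the rest of your argument (uniqueness of $x_z$ via Lemma \ref{affinehull}, and $\pi(K\cap L_K)\subseteq C$ via $C^{oo}=C$) is sound.
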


Theorem \ref{conecontline} can be proved using similar arguments for  Theorem \ref{pointedcone} and Theorem \ref{unpointedcase}.

\subsection{$C$ is not full dimensional}
When $C$ is not a full dimensional convex set, it has a non-trivial affine hull.
\begin{theorem}\label{notfulldim}
Let $C$ be a closed convex set in $\R{n}$. The polar $C^o$ contains lines if and only if $C$ is contained in a non-trivial linear space. When $C$ contains the origin, $C$ is not full dimensional if and only if $C^o$ contains lines.
\end{theorem}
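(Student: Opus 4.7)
The plan is to prove the first biconditional directly from the defining inequality of the polar, and then deduce the second biconditional from the first using the fact that an affine hull containing the origin is actually a linear subspace.

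For the forward direction of the first biconditional, suppose $C^o$ contains a line, so there exists $y\neq 0$ with $\lambda y\in C^o$ for every $\lambda\in\mathbb{R}$. By the definition of the polar, $\langle \lambda y,x\rangle\leq 1$ for all $x\in C$ and all $\lambda\in\mathbb{R}$. Letting $\lambda\to\pm\infty$ forces $\langle y,x\rangle=0$ for every $x\in C$, so $C\subseteq y^\perp$, which is a proper linear subspace of $\R{n}$.

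For the converse, suppose $C\subseteq V$ for some proper linear subspace $V\subsetneq\R{n}$. Pick any nonzero $y\in V^\perp$. Then for every $x\in C$ and every $\lambda\in\mathbb{R}$ we have $\langle \lambda y,x\rangle=0\leq 1$, so the entire line $\mathbb{R} y$ lies in $C^o$. Hence $C^o$ contains lines.

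The second statement is then immediate. If $C$ contains the origin and $\aff{C}\neq\R{n}$, then $\aff{C}$ is an affine subspace that passes through $0$, and therefore coincides with a proper linear subspace of $\R{n}$ containing $C$; the first biconditional gives that $C^o$ contains lines. Conversely, if $C^o$ contains lines, the first biconditional gives $C\subseteq V$ for some proper linear subspace $V$, so $\aff{C}\subseteq V$ and $C$ is not full dimensional. There is no real obstacle here: the only subtlety is remembering to use the hypothesis $0\in C$ to upgrade ``contained in a proper linear subspace'' to ``not full dimensional'' via the affine hull, since in general a non-full-dimensional set could live in an affine subspace that is not linear.
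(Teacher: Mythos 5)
Your proof is correct and follows essentially the same route as the paper: both reduce the first equivalence to the existence of a nonzero $a$ with $\langle a,x\rangle=0$ for all $x\in C$ (you by letting $\lambda\to\pm\infty$ in the polar inequality, the paper via the support-function description of the lineality space of $C^o$), and both obtain the second equivalence from the observation that the affine hull of a set containing the origin is a linear subspace. The only point you gloss over is that a line contained in $C^o$ need not pass through the origin a priori; since $0\in C^o$ and $C^o$ is closed and convex, Theorem \ref{closedone} shows the direction $y$ of such a line satisfies $\pm y\in 0^+C^o$, so the line $\mathbb{R}y$ itself lies in $C^o$ and your argument goes through.
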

\begin{proof}
$C^o$ contains  lines if and only if there exists $a\in\R{n}$ such that $\support{a}{C}\leq 0$ and $\support{-a}{C}\leq 0$, i.e. $C$ is contained in the set $\{x\mid a^Tx=0\}$.

When $C$ contains the  origin, $C$ is not full dimensional if and only if there exists $a\in\R{n}$ such that $C$ is contained in $\{x\mid a^Tx=0\}$.
\end{proof}

We assume that the convex set $C$ is not full dimensional and contains no lines. Then $\cp$ may or may not  contain  lines. If $\cp$ contains no lines, we  have the same results as the case that $C$ is  full dimensional. When $\cp$ contains  lines, there exists no extreme point or extreme direction in $\cp$ and the sets $D_1$, $D_2$ and $D_3$ are empty.  Let $\lincc$  denote the lineality space of $\cp$. Assume  $\cp=C'+\lincc$ such that $C'=\cp \cap  \lincco$. The closed convex set $C'$  contains no lines. 
It is clear that   $0^+C'=\rccp\cap \lincco$. Recall that $C_3=\{x\mid \support{x}{C} \leq -1\}$, $C_3'=C_3\cap \lincco$  contains no lines. Let
\begin{equation*}
D_1'=\extc{C'} \backslash 0,\  D_2'=\extrc{0^+C'} \cap \{x\mid\support{x}{C'}=0\},\  D_3'=\extc{C_3'}.
\end{equation*}
Let
\begin{equation*}
D_{32}'=\extrc{0^+C'} \cap \{x\mid\support{x}{C'}=-1\}.
\end{equation*}
Then $D_{32}'\subseteq D_3'$.

 \begin{theorem}\label{identify points in nonfull C}
 Assume a closed convex set $C\subset\R{n}$ is not full dimensional and contains no lines. For a vector $x\in \R{n}$, $x\in C$ if and only if for every $l\in D_1'$, $\langle l,x\rangle\leq 1$, for every $l\in D_2'$, $\langle l,x\rangle\leq 0$, for every $l\in D_3'$, $\langle l,x\rangle\leq -1$ and $x \in \lincco$, where   $\lincc$  is the lineality space of $\cp$.
\end{theorem}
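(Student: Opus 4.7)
The plan is to imitate the argument of Theorem~\ref{identify points in C}, reducing the problem from $\R{n}$ to the subspace $\lincco$ by means of an orthogonal decomposition, and then running the same three-case analysis inside $\lincco$ using the extreme structure of $C'$. Necessity is essentially immediate: the inclusions $D_1'\subseteq C'\subseteq\cp$, $D_2'\subseteq 0^+C'\subseteq\rccp$, and $D_3'\subseteq C_3'\subseteq C_3$ give the three bounds on $\langle l,x\rangle$ directly from the definitions of $\cp$, $\rccp$, and $C_3$. To see $x\in\lincco$, note that $\lincc=\rccp\cap(-\rccp)$ by the definition of the lineality space, so every $l\in\lincc$ satisfies $\support{l}{C}\le 0$ and $\support{-l}{C}\le 0$; this forces $\langle l,x\rangle=0$ for every $x\in C$, whence $C\subseteq\lincco$.

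For sufficiency, the plan is to apply Theorem~\ref{charc}, which reduces the task to proving $\langle l,x\rangle\le\support{l}{C}$ for every $l\in\R{n}$. The inequality is vacuous when $\support{l}{C}=+\infty$, so it suffices to treat $l$ in the barrier cone; in that case decompose $l=l'+l''$ orthogonally with $l'\in\lincco$ and $l''\in\lincc$. The assumption $x\in\lincco$ gives $\langle l,x\rangle=\langle l',x\rangle$, while sublinearity of the support function combined with $\support{\pm l''}{C}=0$ yields $\support{l}{C}=\support{l'}{C}$. Hence it is enough to establish $\langle l',x\rangle\le\support{l'}{C}$ for $l'\in\lincco$, which localizes the problem inside $\lincco$ where $C'$, $0^+C'$, and $C_3'$ are all closed convex sets containing no lines.

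The main step is then a three-case analysis on $s:=\support{l'}{C}$ paralleling Theorem~\ref{identify points in C}: if $s>0$ then $l'/s\in\cp\cap\lincco=C'$; if $s=0$ then $l'\in\rccp\cap\lincco=0^+C'$; and if $s<0$ then $l'/|s|\in C_3\cap\lincco=C_3'$. In each case, Theorem~\ref{noncomexp} applied inside $\lincco$ decomposes the relevant element as a convex combination of extreme points (drawn from $D_1'\cup\{0\}$ or $D_3'$) plus a non-negative combination of extreme directions of $0^+C'$, which split as $D_2'\cup D_{32}'$. Substituting into $\langle\cdot,x\rangle$ and applying the three given inequalities will then yield $\langle l',x\rangle\le s$.

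The main obstacle is verifying the analogues of Theorem~\ref{small lemma} and Remark~\ref{D123} inside $\lincco$: specifically, that $\support{l}{C}=1$ for each $l\in D_1'$ and $\support{l}{C}=-1$ for each $l\in D_3'$ (an extremality-plus-scaling argument mirroring Theorem~\ref{small lemma}), and, crucially, the inclusion $D_{32}'\subseteq D_3'$ (the analogue of $D_{32}\subseteq D_3$ from the full-dimensional case), which is exactly what lets the contribution of $D_{32}'$ in the decomposition be controlled by the hypothesis $\langle l_3,x\rangle\le -1$ on $D_3'$.
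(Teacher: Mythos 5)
Your proposal is correct and follows essentially the same route as the paper, whose entire proof is the remark ``Similar to the proof of Theorem \ref{identify points in C}''; your orthogonal splitting $l=l'+l''$ with $\support{l}{C}=\support{l'}{C}$ and the three-case analysis on $\support{l'}{C}$ inside $\lincco$ is precisely the reduction that remark leaves implicit. The only quibble is that the ``obstacles'' you flag at the end are lighter than you suggest: the values $\support{l}{C}=\pm1$ on $D_1'$ and $D_3'$ are never actually used in the sufficiency argument (only the hypothesized inequalities on $x$ are), and the inclusion $D_{32}'\subseteq D_3'$ is already asserted in the paper immediately before the theorem.
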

\begin{proof}
Similar to the proof of  Theorem \ref{identify points in C}.
\end{proof}

Assume the closed convex set $C$ is not full dimensional and contains no lines. By replacing $D_i$ by $D_i'$ for $i=1,2,3$ in  Theorem \ref{slackop} and Definition \ref{Kfactor}, we can define the  slack operator $S_C$ and its $K$-factorization, then all results in  Subsection \ref{sec3.1.1} can be extended trivially to the case that $C$ is not full dimensional. Although results in Subsection \ref{containline} can also be extended  to the case that the closed convex set $C$ is not full dimensional and contains  lines, it becomes much more complicated and we omit the discussions here.

\section{Cone lifts of polyhedra}\label{polyhedralift}

  Similar to  \cite[Section 3]{Gouveia2013}, we specialize  results given in previous section to the case of cone lifts of polyhedra. Let $C\subset\R{n}$ be a polyhedron defined by a set of  linear inequalities:
\begin{align}\label{convexrepresention}
C=\{x\in\R{n}: &\ f_1(x)\leq\alpha_1,\ldots,f_{k_1}(x)\leq\alpha_{k_1},\ g_1(x)\leq0,\ldots,g_{k_2}(x)\leq0,\\
\nonumber &\ h_1(x)\leq-{\beta_1},\ldots,h_{k_3}(x)\leq-{\beta_{k_3}}\},
\end{align}
where $ {\alpha_i}>0$ for $1 \leq i \leq k_1$ and ${\beta_j}>0$ for $ 1 \leq j \leq k_3$.
The  recession cone of $C$  has the following form:
\begin{align}
\rpc=\{x\in\R{n}:&f_1(x)\leq0,\ldots,f_{k_1}(x)\leq0,\ g_1(x)\leq0,\ldots,g_{k_2}(x)\leq0,\\
\nonumber &h_1(x)\leq0,\ldots,h_{k_3}(x)\leq0\}.
\end{align}
  Let the convex set $C$ be generated by  a set of points $c_1,\ldots,c_t$ and  directions $r_1,\ldots,r_s$. We extend the  definition of a slack matrix   in \cite{Gouveia2013,Yannakakis1991}.

 \begin{define}\label{slacek}
  We define the  \itshape{slack matrix} of $C$  as  $[S_1^T,  S_2^T, S_3^T]^T$,  where
\begin{enumerate}
\item  $S_1 \in \mathbb{R}^{k_1\times (t+s)}$ whose $(i,j)$-entry is $\alpha_i-f_i(c_j)$ for $i=1,\ldots,k_1$, $j=1,\ldots,t$ and $(i,t+j)$-entry is $-f_i(r_j)$ for $i=1,\ldots,k_1$, $j=1,\ldots,s$.

\item  $S_2 \in \mathbb{R}^{k_2\times (t+s)}$ whose $(i,j)$-entry is $-g_i(c_j)$ for $i=1,\ldots,k_2$, $j=1,\ldots,t$ and $(i,t+j)$-entry is $-g_i(r_j)$ for $i=1,\ldots,k_2$, $j=1,\ldots,s$.

\item  $S_3 \in \mathbb{R}^{k_3\times (t+s)}$ whose $(i,j)$-entry is $-\beta_i-h_i(c_j)$ for $i=1,\ldots,k_3$, $j=1,\ldots,t$ and $(i,t+j)$-entry is $-h_i(r_j)$ for $i=1,\ldots,k_3$, $j=1,\ldots,s$.

\end{enumerate}
 Assume $C$ is a full dimensional polyhedron containing no lines, the  slack matrix $S$ is called the  canonical slack matrix of $C$ if $f_i, g_i, h_i$ represent the facets of $C$, $\alpha_i=1,\beta_j=1$ for $i=1,\ldots,k_1$, $j=1,\ldots, k_3$ and $c_1,\ldots,c_t$ and $r_1,\ldots,r_s$ are the vertices and the extreme directions of $C$  respectively.
\end{define}

\begin{define}\label{kfactorizable}\cite[Definition 7]{Gouveia2013}
Let  $M=(M_{ij})\in\mathbb{R}_+^{p\times q}$ be a nonnegative matrix and $K$ a closed convex cone. Then, a $K$-factorization of $M$ is a pair of ordered sets $a_1,\ldots,a_p\in K$ and $b_1,\ldots,b_q\in K^*$ such that $\langle a_i,b_j\rangle=M_{ij}$.
\end{define}

Definition \ref{kfactorizable} generalizes nonnegative factorizations  of nonnegative matrices \cite{Yannakakis1991} to arbitrary closed convex cones.  We  generalize results
\cite[Theorem 13]{Fiorini2012},  \cite[Theorem 3]{Gouveia2013} and \cite[Theorem 3]{Yannakakis1991} to  show  the equivalence between the $K$-lift of a polyhedron and  the $K$-factorization of a slack matrix.

%
 When $C$ is a  full dimensional polyhedron containing no lines and $K\subset \R{m}$ a full dimensional polyhedral cone, the $K$-factorization of a  slack operator is identical  to the  $K$-factorization of the canonical  slack matrix of $C$.  Theorem \ref{kliftfact} can be deduced directly from  Theorem \ref{pointedcase} and \ref{pointedcone} when $C$ is full dimensional.
\begin{theorem}\label{kliftfact}
Let $K$ be a full dimensional closed convex cone in $\R{m}$. If a full dimensional polyhedron $C\subset\R{n}$ containing no lines has a proper  $K$-lift, then every  slack matrix of $C$ admits a $K$-factorization. Conversely, if some  slack matrix of $C$ has a $K$-factorization, then $C$ has a  $K$-lift.
\end{theorem}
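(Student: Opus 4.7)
The plan is to reduce Theorem \ref{kliftfact} to Theorems \ref{pointedcase} and \ref{pointedcone} by establishing a tight correspondence between the slack operator $S_C$ and any slack matrix of $C$. The first step is to identify the structure of $D_1$, $D_2$, $D_3$ for a full dimensional polyhedron $C$ containing no lines: the polars $\cp$, $\rccp$ and $C_3$ are themselves polyhedra, and their extreme elements are, up to positive scalars, the facet-defining functionals of $C$. When $f_i(x)\le\alpha_i$ with $\alpha_i>0$ is a facet, the vector $f_i/\alpha_i$ lies in $D_1$; when $g_i(x)\le 0$ is a facet, the (normalized) $g_i$ represents an element of $D_2$; when $h_i(x)\le -\beta_i$ with $\beta_i>0$ is a facet, $h_i/\beta_i\in D_3$. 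Likewise $\extc{C}$ is the set of vertices and $\extrc{0^+C}$ the set of extreme directions.

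Second, I would verify that entries of the canonical slack matrix (built from facets normalized to $\alpha_i=\beta_j=1$, vertices, and extreme directions) coincide with values of the slack operator $S_C$ defined in (\ref{slackop}). For instance,
\[
S_1[i,j]=1-f_i(c_j)=\svpcone(c_j,f_i),\qquad S_1[i,t+j]=-f_i(r_j)=\svprci(r_j,f_i),
\]
and analogous equalities hold for $S_2$ and $S_3$ using $\svpctwo,\svpcthree,\svprci$. For a general (non-canonical) slack matrix, each row is a positive-scalar rescaling of a canonical row; since $K^*$ is closed under nonnegative scaling, absorbing the scalar $\alpha_i$ or $\beta_i$ into the dual map $B_i$ transports a $K$-factorization of one into a $K$-factorization of the other.

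Third, I would deal with redundancy. A general slack matrix may include rows for redundant inequalities and columns for non-extreme generators. Any redundant inequality is a nonnegative combination of facet inequalities, and any listed generator is a convex-plus-conic combination of vertices and extreme directions; thus a $K$-factorization of the canonical slack matrix extends to any slack matrix by taking the matching nonnegative combinations of the $a_j\in K$ and $b_i\in K^*$ (still lying in $K$ and $K^*$), while conversely, restricting a $K$-factorization of a general slack matrix to the facet/vertex/extreme-direction indices yields one for the canonical matrix. Chaining these equivalences with Theorem \ref{pointedcase} (when $C$ is not a translated cone) or with Theorem \ref{pointedcone} applied to the simplified slack operator (\ref{conefactorizable}) (when $C$ is a translated cone, noting that $D_1$ and $D_3$ become empty because every supporting hyperplane of the cone passes through its vertex), produces both implications of the theorem.

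The main obstacle I anticipate is the bookkeeping in the third step: one must check carefully that the nonnegative combinations assembling or reducing a $K$-factorization respect both $K$ and $K^*$ simultaneously, and that the translation of vertex (in the cone case) does not affect the identification of $\extrc{\rpc}$ with the extreme rays used in (\ref{conefactorizable}). A secondary subtlety is that the converse direction only yields a (possibly non-proper) $K$-lift, which is exactly what Theorems \ref{pointedcase} and \ref{pointedcone} provide and what the statement of Theorem \ref{kliftfact} requires.
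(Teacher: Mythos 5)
Your proposal is correct and follows essentially the same route as the paper, which simply observes that for a full dimensional polyhedron containing no lines the $K$-factorization of the slack operator coincides with the $K$-factorization of the canonical slack matrix and then invokes Theorems \ref{pointedcase} and \ref{pointedcone}. Your write-up merely fills in the normalization, redundancy, and translated-cone details that the paper leaves implicit.
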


\subsection{$K$ is a polyhedral cone}
Although we have pointed out in previous section the condition $\rpc=\pi(K\cap 0^+L)$ is not redundant and can not be deduced from the condition
$C=\pi(K\cap L)$ in general. When $C$ and $K$ are both polyhedra, (\ref{CKlift}) and (\ref{Klift}) are equivalent.
\begin{lemma}\label{changebylinear}
Let $C\subset \R{n}$ be a full dimensional polyhedron containing no lines and $K\subset \R{m}$ a full dimensional polyhedral cone, then $C$ has a $K$-lift defined by (\ref{CKlift}) if and only if it has a $K$-lift defined by (\ref{Klift}).
\end{lemma}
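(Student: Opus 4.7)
The forward implication is immediate from the definitions, since (\ref{Klift}) is stronger than (\ref{CKlift}). So the content of the lemma lies in the converse: assume $C = \pi(K \cap L)$ with $L \subseteq \R{m}$ an affine subspace and $\pi : \R{m}\to \R{n}$ linear, and deduce $0^+C = \pi(K \cap 0^+L)$ using that $C$ and $K$ are both polyhedral. My plan is to show that under these polyhedral hypotheses, the recession cone of the projection of the slice is \emph{automatically} the projection of the recession cone of the slice, so no further freedom in $L$ or $\pi$ is required.

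The key step is to invoke two standard facts that hold for polyhedra but not for general closed convex sets. First, since $K$ is a polyhedral cone (so $0^+K = K$) and $L$ is an affine subspace with $K \cap L \neq \emptyset$, the polyhedral intersection rule gives
\[
0^+(K \cap L) = 0^+K \cap 0^+L = K \cap 0^+L.
\]
Second, by the Weyl--Minkowski theorem, the polyhedron $P := K \cap L$ admits a decomposition $P = \co{V} + \cone{R}$ for finite sets $V$ and $R$; then $\pi(P) = \co{\pi(V)} + \cone{\pi(R)}$, so the recession cone of $\pi(P)$ equals $\cone{\pi(R)} = \pi(\cone{R}) = \pi(0^+P)$. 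That is, linear projection commutes with the recession cone operator on polyhedra.

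Chaining these two facts together yields
\[
0^+C \;=\; 0^+\pi(K \cap L) \;=\; \pi\bigl(0^+(K\cap L)\bigr) \;=\; \pi(K \cap 0^+L),
\]
which is exactly the extra requirement in (\ref{Klift}). Thus the same pair $(L,\pi)$ that realizes (\ref{CKlift}) automatically realizes (\ref{Klift}), completing the converse direction.

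The only real obstacle is verifying that both recession identities are being used in regimes where they actually hold. The intersection identity $0^+(P_1 \cap P_2) = 0^+P_1 \cap 0^+P_2$ can fail for general closed convex sets, but it is valid whenever $P_1, P_2$ are polyhedra with nonempty intersection, which is our situation since $K \cap L \supseteq$ a preimage of any point of $C$. The projection identity $0^+\pi(P) = \pi(0^+P)$ can likewise fail in general (projections of closed convex sets need not even be closed), but is again valid for polyhedra via Weyl--Minkowski as above. Once these two polyhedral-specific identities are granted, the proof is essentially just composing them.
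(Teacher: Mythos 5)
Your proof is correct and follows essentially the same route as the paper: write $K\cap L$ with finitely many generators, push the decomposition through $\pi$ to identify $0^+C$ with the cone spanned by the projected directions, and use $0^+(K\cap L)=K\cap 0^+L$. If anything, your appeal to a general Weyl--Minkowski decomposition $\co{V}+\cone{R}$ is slightly more robust than the paper's use of extreme points and extreme directions of $K\cap L$, which strictly speaking requires $K\cap L$ to contain no lines.
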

\begin{proof}
It is sufficient to show that if there exists an affine space $L$ and a linear map $\pi$ from $\R{m}$ to $\R{n}$ such that $C=\pi(K\cap L)$,  we will have $\rpc=\pi(K\cap 0^+L)$. It is clear that if we define $Q$ to be $K\cap L$, then $Q$ is a polyhedron. For  $\forall x \in Q$,  there exist extreme points $\alpha_1,\ldots, \alpha_t$ in $Q$ and non-zero extreme directions $\alpha_{t+1}, \ldots, \alpha_{t+s}$ in $0^+Q$ such that $x=\lambda_1\alpha_1+\cdots+\lambda_t\alpha_t+\lambda_{t+1}\alpha_{t+1}+\cdots+\lambda_{t+s}\alpha_{t+s}$, where $\lambda_1+\cdots+\lambda_t=1$ and  $\lambda_i\geq 0$ for $i=1,\ldots,t+s$. Then we have $\pi (x)=\lambda_1\pi(\alpha_1)+\cdots+\lambda_t\pi(\alpha_t)+\lambda_{t+1}\pi(\alpha_{t+1})+\cdots+\lambda_{t+s}\pi(\alpha_{t+s})$. So $\rpc$ is generated by $\pi(\alpha_{t+1}),\cdots,\pi(\alpha_{t+s})$. On the other hand, since $\alpha_{t+1},\cdots,\alpha_{t+s}$  generate $0^+Q= K\cap 0^+L$, thus $\pi(\alpha_{t+1}),\cdots,\pi(\alpha_{t+s})$ generate $\pi(K\cap 0^+L)$. Hence,  $\rpc=K\cap 0^+L$ and our proof is completed.
\end{proof}

\begin{theorem}\label{polyhedraexact}
Let $C\subset\R{n}$ be a full dimensional polyhedron containing  no lines and $K\subset\R{m}$  a full dimensional polyhedral cone. If $C$ is not a translated convex cone, then $C$ has a $K$-lift defined by (\ref{CKlift}) if and only if the  slack matrix of $C$ in Definition \ref{slacek} has a $K$-factorization defined by Definition \ref{kfactorizable}. If $C$  is a translated convex cone, we can have  the same result if we replace the  $K$-lift defined by ($\ref{CKlift}$) by the $K$-lift defined by ($\ref{conelift}$).
\end{theorem}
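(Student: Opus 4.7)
\emph{Proof plan.} My plan is to bridge between the cone-lift condition (\ref{CKlift}) and the $K$-factorization of the slack matrix by routing through (\ref{Klift}) and the machinery of Section \ref{noncompactlift}. Lemma \ref{changebylinear} shows that for a polyhedron $C$ and a polyhedral cone $K$, the notions of $K$-lift in the senses of (\ref{CKlift}) and (\ref{Klift}) coincide, so the non-translated-cone case of the theorem reduces to identifying lifts in the sense of (\ref{Klift}) with $K$-factorizations of the slack matrix of $C$ as in Definition \ref{slacek}.

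The converse direction is then immediate from the converse of Theorem \ref{kliftfact}: a $K$-factorization of some slack matrix produces a $K$-lift in the sense of (\ref{Klift}), hence one in the sense of (\ref{CKlift}) by Lemma \ref{changebylinear}. No properness is needed in this direction.

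The forward direction is where the real work lies, because Theorem \ref{kliftfact} manufactures a $K$-factorization only from a \emph{proper} $K$-lift. I would handle non-proper lifts by restricting $K$ to a face. Given any $K$-lift $(L,\pi)$ of $C$, let $F$ be the smallest face of $K$ containing $K\cap L$. Since $K$ is polyhedral, $F$ is a polyhedral cone, and by minimality $L$ meets the relative interior of $F$; thus $(L,\pi)$ is a proper $F$-lift of $C$ inside $\mathrm{span}(F)$. Applying Theorem \ref{kliftfact} to $F$ viewed as a full-dimensional polyhedral cone in its own span yields an $F$-factorization $a_i\in F$, $b_j\in F^*$ of the slack matrix. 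To upgrade this to a $K$-factorization I would use the standard polyhedral identity $F^*=K^*+F^\perp$, where $F^\perp$ is the orthogonal complement of $\mathrm{span}(F)$ in $\R{m}$. Writing $b_j=b_j'+b_j''$ with $b_j'\in K^*$ and $b_j''\in F^\perp$, one has $\langle a_i,b_j''\rangle=0$ since $a_i\in F\subseteq(F^\perp)^\perp$, so $\{a_i,b_j'\}$ is the desired $K$-factorization.

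For the translated-cone case an analogous argument applies with Theorem \ref{pointedcone} in place of Theorem \ref{kliftfact} and (\ref{conelift}) in place of (\ref{CKlift}); after translating $C$ to have its apex at the origin, the slack matrix of Definition \ref{slacek} reduces to the slack operator in (\ref{conefactorizable}), and the face-restriction together with the identity $F^*=K^*+F^\perp$ again eliminates the properness requirement. The main obstacle is precisely this properness-reduction step: it crucially exploits that faces of polyhedral cones are polyhedral and that the dual decomposition $F^*=K^*+F^\perp$ holds in the polyhedral setting, which is exactly what distinguishes this situation from the general cone case and lets us drop the properness hypothesis inherited from Theorem \ref{kliftfact}.
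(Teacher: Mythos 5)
Your proposal is correct, and it follows the paper's overall skeleton — reduce (\ref{CKlift}) to (\ref{Klift}) via Lemma \ref{changebylinear}, then invoke Theorem \ref{kliftfact} (resp.\ Theorem \ref{pointedcone} in the translated-cone case) — but it handles the one genuinely nontrivial step, the removal of the properness hypothesis in the forward direction, by a different mechanism. The paper stays inside the proof of Theorem \ref{pointedcase}: the vectors $B_i(y)$ are produced there by strong duality for the problem $\max\{\langle \pi^*(y),w\rangle : w\in K\cap L\}$, and when $K$ is polyhedral this is a linear program, so strong duality (hence the existence of the required $z\in \lin^\bot\cap(K^*+\pi^*(y))$) needs only feasibility $K\cap L\neq\emptyset$ rather than $L\cap\inte{K}\neq\emptyset$. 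You instead keep Theorem \ref{kliftfact} as a black box and restore properness geometrically: pass to the minimal face $F$ of $K$ containing $K\cap L$ (so that a point of $\ri{K\cap L}$ lies in $\ri{F}$ and the lift is a proper $F$-lift in $\mathrm{span}(F)$), then push the resulting $F$-factorization back into $K\times K^*$ via $F^*=K^*+F^\perp$, the inner product being unaffected because $a_i\in F\subseteq(F^\perp)^\perp$. Both arguments use polyhedrality in an essential and honest way — yours through the closedness of $K^*+F^\perp$ and the face structure, the paper's through LP duality — and both are complete; your route has the mild advantage of not reopening the proof of Theorem \ref{pointedcase}, at the cost of the bookkeeping needed to identify the dual cone of $F$ computed in $\mathrm{span}(F)$ with $F^*\cap\mathrm{span}(F)$ in the ambient space before applying the decomposition.
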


\begin{proof}
 According to Lemma \ref{changebylinear}, the polyhedron $C$ has a $K$-lift defined by (\ref{CKlift}) if and only if it has a  $K$-lift defined by (\ref{Klift}).
The properness condition that $L$ intersects  $\inte{K}$  is  used to guarantee the strong duality  in the proof  Theorem \ref{kliftfact} (see  the proof of Theorem 1 in \cite{Gouveia2013}).  When $K$ is a polyhedral cone,  the minimization problem involved  in the proof becomes a  linear-programming problem  and  the strong duality holds if $K \cap L\neq \emptyset$.
\end{proof}

\begin{example}\label{noncomexample} Consider the polyhedron $C\subset\R{2}$ defined by
\begin{equation*}
C=\left\{ (x_1,x_2)\in\mathbb{R}^2:
\left( \begin {array}{cc} 0&1\\ \noalign{\medskip}-2+\sqrt {3}&1
\\ \noalign{\medskip}1-\sqrt {3}&\sqrt {3}-1\\ \noalign{\medskip}-1&2-
\sqrt {3}\\ \noalign{\medskip}-1&-2+\sqrt {3}\\ \noalign{\medskip}1-
\sqrt {3}&1-\sqrt {3}\\ \noalign{\medskip}-2+\sqrt {3}&-1
\\ \noalign{\medskip}0&-1\end {array}
\right)
\left(
  \begin{array}{c}
    x_1\\
    x_2
  \end{array}
\right)
\leq
 \left( \begin {array}{c} 2\\ \noalign{\medskip}\sqrt {3}
\\ \noalign{\medskip}1\\ \noalign{\medskip}2-\sqrt {3}
\\ \noalign{\medskip}\sqrt {3}-2\\ \noalign{\medskip}-2\,\sqrt {3}+3
\\ \noalign{\medskip}\sqrt {3}-2\\ \noalign{\medskip}0\end {array}
 \right)
\right\}.
\end{equation*}
\begin{figure}
\includegraphics[width=0.6\textwidth]{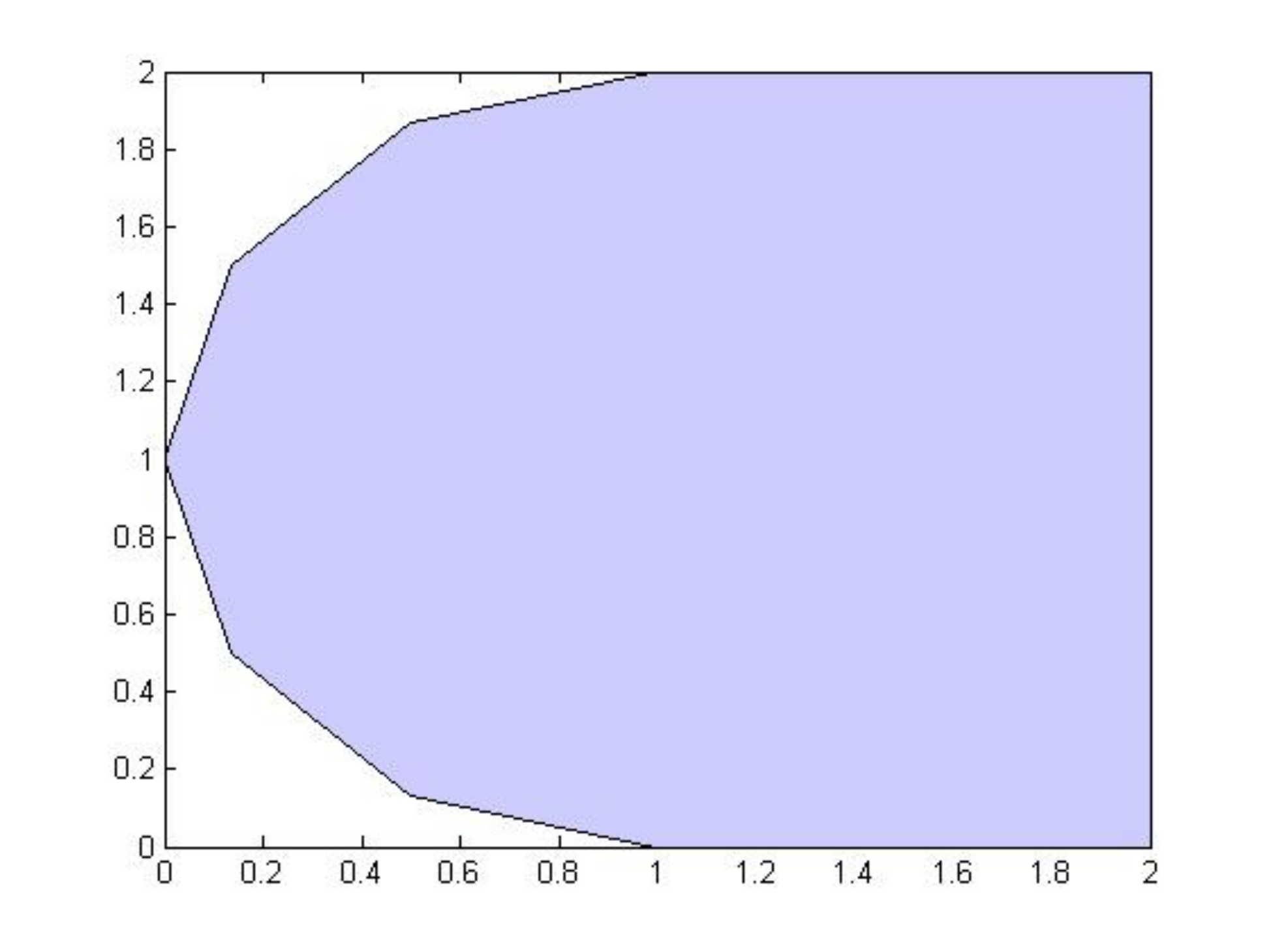}
\caption{Example \ref{noncomexample}}
\end{figure}
By Theorem \ref{polyhedraexact}, $C$ has a $\mathbb{R}^6_+$-lift if and only if the slack matrix $S$ has a $\mathbb{R}^6_+$-factorization. We denote the coefficient matrix by $H$ and the right hand side vector by $d$.
The slack matrix $S$ is
\begin{equation*}
 S=\left[ \begin {array}{cccccccc} 0&1-\frac{\sqrt {3}}{2}&\frac{1}{2}&1&\frac{3}{2}&1+\frac{\sqrt {3}}{2}
 &2&0\\ \noalign{\medskip}0&0&2-\sqrt {3}&\sqrt {3}-1&3-\sqrt
{3}&\sqrt {3}&2&2-\sqrt {3}\\ \noalign{\medskip}2-\sqrt {3}&0&0&2-
\sqrt {3}&\sqrt {3}-1&3-\sqrt {3}&\sqrt {3}&\sqrt {3}-1
\\ \noalign{\medskip}\sqrt {3}-1&2-\sqrt {3}&0&0&2-\sqrt {3}&\sqrt {3}
-1&3-\sqrt {3}&1\\ \noalign{\medskip}3-\sqrt {3}&\sqrt {3}-1&2-\sqrt {
3}&0&0&2-\sqrt {3}&\sqrt {3}-1&1\\ \noalign{\medskip}\sqrt {3}&3-
\sqrt {3}&\sqrt {3}-1&2-\sqrt {3}&0&0&2-\sqrt {3}&\sqrt {3}-1
\\ \noalign{\medskip}2&\sqrt {3}&3-\sqrt {3}&\sqrt {3}-1&2-\sqrt {3}&0
&0&2-\sqrt {3}\\
 \noalign{\medskip}2&1+\frac{\sqrt {3}}{2}&\frac{3}{2}&1&\frac{1}{2}&1-\frac{\sqrt {3}}{2}&0&0\end {array} \right].
\end{equation*}
We compute  a $\mathbb{R}^6_+$-factorization of  $S$ as $S=U\cdot V$ where
\begin{align*}
U= \left[ \begin {array}{cccccc} 1&1&0&1-\frac{\sqrt {3}}{2}&0&0
\\ \noalign{\medskip}1&-2\,\sqrt {3}+4&2-\sqrt {3}&0&0&0
\\ \noalign{\medskip}\sqrt {3}-1&0&\frac{\sqrt {3}}{2}-\frac{1}{2}&0&0&\frac{\sqrt {3}}{2}-\frac{1}{2}
\\ \noalign{\medskip}2-\sqrt {3}&0&0&2-\sqrt {3}&0&1
\\ \noalign{\medskip}0&0&0&2-\sqrt {3}&2-\sqrt {3}&1
\\ \noalign{\medskip}0&0&\frac{\sqrt {3}}{2}-\frac{1}{2}&0&\sqrt {3}-1&\frac{\sqrt {3}}{2}-\frac{1}{2}\\ \noalign{\medskip}0&-2\,\sqrt {3}+4&2-\sqrt {3}&0&1&0
\\ \noalign{\medskip}0&1&0&1-\frac{\sqrt {3}}{2}&1&0\end {array} \right],
\end{align*}
\begin{align*}
V=\left[ \begin {array}{cccccccc} 0&0&0&0&1&\sqrt {3}&2&0
\\ \noalign{\medskip}0&0&\frac{1}{2}&1&\frac{1}{2}&0&0&0\\ \noalign{\medskip}0&0&0&
\sqrt{3}-1&0&0&0&1\\ \noalign{\medskip}0&1&0
&0&0&1&0&0\\ \noalign{\medskip}2&\sqrt {3}&1&0&0&0&0&0
\\ \noalign{\medskip}\sqrt {3}-1&0&0&0&0&0&\sqrt {3}-1&1\end {array}
 \right].
\end{align*}
The $\mathbb{R}^6_+$-lift of  $C$ is:
\begin{equation*}
C=\{(x_1,x_2)\mid \exists y\in\mathbb{R}_+^6 ~{\rm s.t.}~ \ Hx+ U y=d\}.
\end{equation*}
If we eliminate $x_1$ and $x_2$ from $Hx+ U y=d$, we  have
\begin{align*}
\{y\in \mathbb{R}_+^6\mid y_{{1}}&=1+(\sqrt {3}-1)y_{4}+\frac{\sqrt {3}+1}{2}\,y_{6}-\frac{\sqrt {3}+1}{2}\,y_{3}
-y_{{5}},\\
y_{{2}}&=\frac{1}{2}-\frac{1}{2}\,y_{{4
}}-\frac{\sqrt {3}+1}{4}\,y_{{6}}+\frac{\sqrt {3}+1}{4}\,y_{{3}}\}.
\end{align*}
\end{example}

\subsection{$K$ is a positive semidefinite cone}

The {\itshape positive semidefinite rank} of a polytope  $C$ is the smallest $k$ such that $C$ has an $\mathcal S_{+}^k$-lift  \cite{Gouveia2013, GRT2013}.  A lower bound on the psd rank of a polytope is given in \cite[Proposition 3.2]{GRT2013}. Now we  extend this result to the  case where $C$ is a polyhedron. The following lemma extends the result in \cite[Proposition 3.8]{GRT2013}.

\begin{lemma}\label{psdplusone}
  Assume $C$ is a full dimensional polyhedron containing no lines. The polyhedron $C\subset\R{n}$ has a facet of psd rank $k$, then the psd rank of $C$ is at least $k+1$.
\end{lemma}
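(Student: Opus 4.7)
The plan is to prove the contrapositive: if $C$ has a proper $\mathcal{S}_+^r$-lift, then every facet $F$ of $C$ has psd rank at most $r-1$. This extends \cite[Proposition 3.8]{GRT2013} to the non-compact setting, with the new feature being the need to handle the extreme directions of $0^+C$.

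By Theorem \ref{kliftfact}, a proper $\mathcal{S}_+^r$-lift of $C$ gives an $\mathcal{S}_+^r$-factorization of the canonical slack matrix: there exist $A_v \in \mathcal{S}_+^r$ for every vertex $v$ of $C$, $A_d \in \mathcal{S}_+^r$ for every extreme direction $d$ of $0^+C$, and $B_f \in \mathcal{S}_+^r$ for every facet $f$ of $C$, whose pairwise inner products reproduce the slack entries. Fix the facet $F$ with factor $B_F$. Since $F \neq C$, there is some vertex or extreme direction with positive slack against $F$, so $B_F \neq 0$ and $\rho := \rank(B_F) \geq 1$. For every vertex $v$ on $F$, $\langle A_v, B_F \rangle = 0$, and because $A, B \in \mathcal{S}_+^r$ with $\langle A, B\rangle = 0$ forces $AB = 0$, the image of $A_v$ lies in $\ker B_F$. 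The same holds for every extreme direction $d$ of $0^+C$ with zero slack against $F$, which is precisely the set of extreme directions of $0^+F$ (since $0^+F$ is a face of $0^+C$).

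Let $P$ be an $(r-\rho) \times r$ matrix whose rows form an orthonormal basis of $\ker B_F$, and define $\tilde{A}_\bullet := P A_\bullet P^T \in \mathcal{S}_+^{r-\rho}$ for each vertex of $F$ and each extreme direction of $0^+F$, so that $A_\bullet = P^T \tilde{A}_\bullet P$. For every facet $\hat G$ of $C$ whose intersection with $F$ is a facet of $F$ (i.e.\ a ridge of $C$), set $\tilde B_{\hat G} := P B_{\hat G} P^T \in \mathcal{S}_+^{r-\rho}$; then $\langle \tilde A_\bullet, \tilde B_{\hat G}\rangle = \langle A_\bullet, B_{\hat G}\rangle$ is the slack of the corresponding vertex or extreme direction of $F$ against the inequality inherited from $\hat G$. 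Taking these inherited inequalities as the defining inequalities of $F$ in its affine hull produces a slack matrix of $F$ in the sense of Definition \ref{slacek} that admits an $\mathcal{S}_+^{r-\rho}$-factorization. Since $F$ is a full dimensional polyhedron in its affine hull containing no lines, the converse direction of Theorem \ref{kliftfact} yields an $\mathcal{S}_+^{r-\rho}$-lift of $F$, so the psd rank of $F$ is at most $r-\rho \leq r-1$.

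The main delicate point is the correspondence between facets of $F$ and ridges of $C$: one must check that every facet of $F$ arises as $F \cap \hat G$ for some facet $\hat G$ of $C$ (standard for polyhedra), and that the three types of inequalities in Definition \ref{slacek} (normalized to $\leq 1$, $\leq 0$, or $\leq -1$) are handled uniformly when restricted from $C$ to $F$. Positive rescaling of a row of the slack matrix of $F$ corresponds to rescaling the associated $\tilde B_{\hat G}$ by the same positive constant, which stays inside $\mathcal{S}_+^{r-\rho}$, so the factorization persists under any such reformatting of the slack matrix.
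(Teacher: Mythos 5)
Your proof is correct, but it follows a genuinely different route from the paper's. The paper works at the level of matrices: it notes that the slack matrix $S_C$ contains the $(r+1)\times(s+t+1)$ submatrix $S'=\bigl(\begin{smallmatrix} S_F & w\\ 0 & F(\alpha)\end{smallmatrix}\bigr)$ with $F(\alpha)>0$, obtained by adjoining to the rows indexed by the facets of $C$ that cut out the facets of $F$ the row of $F$ itself, and to the columns indexed by the generators of $F$ one generator $\alpha$ of $C$ lying off $F$; it then cites \cite[Proposition 2.6]{GRT2013}, which says the psd rank of such a block-triangular matrix is exactly one more than that of $S_F$, together with monotonicity of psd rank under passing to submatrices. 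You instead re-prove the substance of that cited proposition directly via the complementary-slackness/kernel-compression argument: $\langle A,B_F\rangle=0$ with both factors psd forces the ranges of the factors of all generators of $F$ into $\ker B_F$, and conjugating by an isometry onto $\ker B_F$ yields an $\mathcal{S}_+^{r-\rho}$-factorization of a slack matrix of $F$, whence the psd rank of $F$ is at most $r-1$. Your version is self-contained and makes explicit the one genuinely new point in the non-compact setting, namely that the extreme rays of $0^+F$ are exactly the extreme rays of $0^+C$ with zero slack against $F$; the paper absorbs this silently into the zero block of $S'$. The paper's version is shorter because it outsources the linear algebra. One small caveat, common to both arguments: your contrapositive starts from a \emph{proper} $\mathcal{S}_+^r$-lift, so to bound the psd rank of $C$ you still need the standard reduction replacing a non-proper $\mathcal{S}_+^r$-lift by a proper lift into a face of $\mathcal{S}_+^r$, i.e.\ into some $\mathcal{S}_+^{r'}$ with $r'\le r$; the paper relies on the same implicit identification of the psd rank of $C$ with that of its slack matrix.
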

\begin{proof}
Let $F$ be a facet of $C$. Assume  the slack matrix $S_F$ of $F$  has psd rank $k$. Let   $\alpha_1, \ldots, \alpha_s$ be vertices of $F$ and $\alpha_{s+1}, \ldots, \alpha_{s+t}$ extreme directions of $F$. Suppose the facets of $F$ correspond to the facets  $F_1, \ldots, F_r$ of $C$ other than $F$.
Since $F \neq C$, there exists a vertex or an extreme direction denoted by $\alpha$ which does not belong to $F$ and $F(\alpha)>0$.  The  slack matrix $S_C$ of $C$ contains a  $(r+1)\times (s+t+1)$ submatrix which
is indexed by $F_1,\ldots,F_r,F$ in the row and $\alpha_1,\ldots,\alpha_s,\alpha_{s+1},\ldots,\alpha_{s+t}, \alpha$ in the column and has the following form
\begin{equation*}
S'=\left(\begin{array}{cc}
S_F&w\\
0& F(\alpha)
\end{array}
\right) ~{\text{where}}~ w \in \RR_{+}^r \text{ and } F(\alpha)>0.
\end{equation*}
According to  \cite[Proposition 2.6]{GRT2013}, we know that the psd rank of $S'$ is $k+1$. Hence $S_C$ has psd rank at least $k+1$.
\end{proof}

\begin{theorem}
If $C\subset \R{n}$ is a full dimensional polyhedron  that contains no lines, then the psd rank of $C$ is at least $n$.
\end{theorem}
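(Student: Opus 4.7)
The natural plan is induction on the dimension $n$, using Lemma \ref{psdplusone} as the inductive engine.

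For the base case $n=1$, a full dimensional polyhedron in $\R{1}$ containing no lines is either a bounded interval or a half-line; in either case it has at least one vertex and its canonical slack matrix has a strictly positive entry, so the psd rank is at least $1$.

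For the inductive step, assume the statement holds in dimension $n-1$ and let $C \subset \R{n}$ be a full dimensional polyhedron containing no lines. Since $C$ is full dimensional, it has at least one facet $F$; this $F$ is an $(n-1)$-dimensional polyhedron, and because $F \subseteq C$ it inherits the property of containing no lines. Viewed inside its affine hull $\aff{F}$, which is affinely isomorphic to $\R{n-1}$, the facet $F$ is a full dimensional polyhedron in $\R{n-1}$ containing no lines, so the inductive hypothesis gives that the psd rank of $F$ is at least $n-1$. Lemma \ref{psdplusone} then yields that the psd rank of $C$ is at least $n$, completing the induction.

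The only slightly delicate point is that the inductive hypothesis needs to be applied to $F$ as a convex set in its own $(n-1)$-dimensional affine hull rather than as a set in $\R{n}$. This requires the observation that the psd rank is an affine invariant: any $\mathcal{S}_+^k$-lift of $F$ in $\R{n}$ yields one in $\R{n-1}$ after composing the projection $\pi$ with an affine isomorphism $\aff{F} \to \R{n-1}$, and vice versa. This is routine and is the reason the dimension drops cleanly by exactly one at each step, matching the $+1$ gained from Lemma \ref{psdplusone}.
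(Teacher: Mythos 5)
Your proof is correct and follows essentially the same route as the paper: induction on the dimension with Lemma \ref{psdplusone} supplying the increment, and a direct check of the base case $n=1$ (where the paper likewise notes the half-line possibility). The extra remarks you add about affine invariance of the psd rank and the facet inheriting the no-lines property are fine and only make explicit what the paper leaves implicit.
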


\begin{proof}
 The proof is very similar to the one given in \cite[Proposition 3.2]{GRT2013}.  The only difference is that if   $n=1$, $C$ can be  a half line. Hence  there exists a  slack matrix whose size is $1$ by $2$. Obvious, the psd rank of this slack matrix is $1$.
Assume the statement holds up to dimension $n-1$. We select a facet $F$ of $C$ which has dimension $n-1$ and its psd rank is at least $n-1$. By Lemma \ref{psdplusone}, the psd rank of $C$ is at least $n$.
\end{proof}

\begin{remark}
There exists a full dimensional polyhedron $C\subset\R{n}$ that contains no lines such that the psd rank of $C$ is $n$. For example, consider the $n$-dimensional nonnegative orthant $\RR_{+}^{n}=\{x\mid x_i\geq 0,\ i=1,\ldots, n\}$. The  slack matrix of $\RR_{+}^{n}$ is $\left( {\sf{0}},I_n\right)$ where $I_n$ is a unit matrix and ${\sf{0}}$ is a zero vector and its psd rank is $n$.
\end{remark}

\subsection{Identifying the slack matrix of a polyhedron}
Gouveia et al. in  {\cite{Gouveia20132921} purposed algorithmic methods to identify whether a  nonnegative matrix is a slack matrix of a polyhedral cone or a polytope.  These results can be generalized to characterize the  slack matrix of a polyhedron. Similar to \cite[Lemma 10]{Gouveia20132921}, we have the following lemma:
\begin{lemma}\label{slacksame}
A nonnegative matrix $S$ is a  slack matrix of a polyhedron $C$ if and only if it is a  slack matrix of a full dimensional polyhedron which contains no lines.
\end{lemma}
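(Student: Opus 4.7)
The reverse direction is immediate, so the work is to show that if $S$ is the slack matrix of a polyhedron $C\subset\R{n}$, then $S$ is also the slack matrix of some full-dimensional polyhedron without lines. My plan is to carry out two successive reductions that each leave the entries of $S$ unchanged: first remove the lineality of $C$, then pass to the affine hull of the resulting polyhedron.

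For the first reduction, let $L_1$ be the lineality space of $C$ and set $C_0 := C\cap \linco$, so that $C = C_0 + L_1$ and $C_0$ contains no lines. Since $L_1\subseteq 0^+C$, each linear functional $f_i$ appearing in the description of $C$ must vanish on $L_1$, i.e.\ $f_i\in\linco$. Decompose every generator as $c_j = c_j^0 + c_j^1$ and $r_k = r_k^0 + r_k^1$ with the superscript-$0$ part in $\linco$; then $c_j^0\in C_0$, $r_k^0\in 0^+C_0$, and projecting an expansion of a general point of $C_0$ onto $\linco$ shows that $\{c_j^0\}\cup\{r_k^0\}$ still generates $C_0$. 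Because $f_i(c_j) = f_i(c_j^0)$ and $f_i(r_k) = f_i(r_k^0)$, the slack matrix of $C_0$ with respect to the restricted inequalities $f_i|_{\linco}(x)\le\alpha_i$ and the projected generators coincides with $S$.

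For the second reduction, assume the polyhedron $C_0$ produced above (or $C$ itself, if $C$ had no lines) has dimension $d$ strictly less than the ambient dimension. Pick $x_0\in C_0$, write $\aff{C_0} = x_0 + V$ with $\dim V = d$, and choose a linear isomorphism $V\cong\R{d}$. Translating by $-x_0$ and identifying $V$ with $\R{d}$ converts each inequality $f_i(x)\le\alpha_i$ into an affine inequality on $\R{d}$, which is rewritten as a linear inequality with right-hand side $\alpha_i - f_i(x_0)$. A direct computation yields $(\alpha_i - f_i(x_0)) - f_i(c_j^0 - x_0) = \alpha_i - f_i(c_j^0)$, while the direction slacks are unchanged since $r_k^0\in V$. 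Thus the slack matrix is again $S$, and the new polyhedron is full-dimensional in $\R{d}$ and contains no lines.

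The main obstacle is bookkeeping rather than geometry: I need to verify that the restricted data in each step really constitute a legitimate inequality-plus-generator description in the sense of Definition \ref{slacek}, and that the tripartite row structure $[S_1^T,S_2^T,S_3^T]^T$ survives restriction and translation. The former holds because restrictions and affine changes of variables send linear inequalities to linear inequalities and preserve the generating property. The latter holds because the sign of each right-hand side is preserved by restriction to $\linco$, and in the second reduction the global constant $-f_i(x_0)$ can, if necessary, be absorbed by reassigning a row among the three blocks. Rows whose $f_i$ is constant on $\aff{C_0}$ become constant (or zero) rows after reduction and cause no difficulty.
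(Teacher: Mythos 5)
Your proof is correct and follows essentially the same route as the paper's: strip off the lineality space by projecting onto $\linco$ (using that the defining functionals vanish on $\linc$, so all slack entries are preserved), then pass to the affine hull of the resulting pointed polyhedron. The paper packages the first step as a matrix identity $S=U\cdot\mathrm{diag}(1,QQ^T)\cdot V$ with $Q$ an orthonormal basis of $\linco$, whereas you argue directly on generators and functionals and are somewhat more explicit about the translation and the reassignment of rows among the three blocks, but the underlying argument is the same.
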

\begin{proof}
 Since the  slack matrix of a polyhedron is also a slack matrix of its translation. We can assume that the polyhedron  contains the origin.
  Assume that  $C$ contains lines, and  can be decomposed as $C=\pointed+\linc$, where   $\pointed=C\cap \linco$ is a convex set containing  no lines and $\linco$ is the orthogonal complement of $\linc$.

Let $C$ be a polyhedron defined by a set of  linear inequalities $f_i(x) \leq\alpha_{i}, \ g_j(x) \leq 0$
where $\alpha_{i} >0$, $1 \leq i \leq k_1$ and $1 \leq j \leq k_2$.  Every point in $C$ can be expressed by the convex combination of a set of  points $c_1,\ldots,c_t$ and  directions $r_1,\ldots,r_s$.  According to Definition \ref{slacek},  the  slack matrix $S$ of a polyhedron $C$ can be factorized  as
\begin{align}\label{slackmatrix}
S=U \cdot V =\left(
  \begin{array}{cc}
    \alpha_1 & -f_1\\
    \vdots & \vdots\\
    \alpha_{k_1}& -f_{k_1}\\
    0 & -g_1\\
    \vdots & \vdots\\
    0& -g_{k_2}\\
  \end{array}
\right)
\left(
  \begin{array}{cccccc}
    1& \cdots& 1& 0& \cdots& 0\\
    c_1& \cdots&c_t& r_1&\cdots&r_s
  \end{array}
\right).
\end{align}
Since  the  linear functions corresponding to $f_i$  and   $g_j$ are  bounded above on $C$, $f_i$  and   $g_j$ are  orthogonal to $\linc$. Let $Q$ be the orthogonal basis of $\linco$, then we have $f_i \cdot (I-QQ^T)=\sf{0}$ and $g_j\cdot (I-QQ^T)=\sf{0}$ where $I$ is an identity matrix and $\sf{0}$ is a zero vector.  We have the following equalities:
\begin{align*}\label{transformatrix}
 S&=U \cdot \left(\begin{array}{cc}1&0\\ 0& I-QQ^T+QQ^T\end{array}\right) \cdot V \\
 &=U\cdot\left(\begin{array}{cc}0&0\\ 0& I-QQ^T\end{array}\right) \cdot V+U\cdot\left(\begin{array}{cc}1&0\\ 0& QQ^T\end{array}\right) \cdot V\\
&=U\cdot\left(\begin{array}{cc}1&0\\ 0& Q\end{array}\right)\cdot\left(\begin{array}{cc}1&0\\ 0& Q^T\end{array}\right)\cdot V.
\end{align*}
Let $U'=U\cdot\left(\begin{array}{cc}1&0\\ 0& Q\end{array}\right)$ and $V'=\left(\begin{array}{cc}1&0\\ 0& Q^T\end{array}\right)\cdot V$, it is easy to see that $S=U' \cdot V'$ is the  slack matrix of  $Q^T C_0$, which is a  polyhedron that contains no lines.
%

 If $\pointed$ is not full dimensional, $\aff{\pointed}$ is a nontrivial linear space. By similar  transformations used above, we can show $S$ is the slack matrix of $\pointed$ in $\aff{\pointed}$.
\end{proof}

The following theorem and its proof is similar to  \cite[Theorem 6]{Gouveia20132921}.

\begin{theorem}
A nonnegative matrix $S\in\mathbb{R}_+^{p\times q}$ with $\text{rank}(S)\geq 2$ is a slack matrix of a polyhedron  if and only if $S$ is a slack matrix of a polyhedral cone and there exists a vector  whose component consists of only $0$ and $1$ contained in the row space of $S$.
\end{theorem}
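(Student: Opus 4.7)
The plan is to generalize the polytope characterization in \cite[Theorem 6]{Gouveia20132921}, where the all-ones vector plays the distinguished role, to the polyhedral setting, replacing it with an arbitrary nonzero $\{0,1\}$-vector that partitions the columns of $S$ into vertex-columns and ray-columns. For the forward direction, suppose $S$ is a slack matrix of a polyhedron $C$. By Lemma \ref{slacksame} we may assume $C \subset \RR^n$ is full-dimensional and contains no lines. The factorization (\ref{slackmatrix}) presents $S = UV$, where the first row of $V$ is precisely the $\{0,1\}$-vector
\[\vec{v} := (\underbrace{1,\ldots,1}_{t},\underbrace{0,\ldots,0}_{s}),\]
indicating which columns of $S$ come from vertices versus extreme directions. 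Assuming $C$ is not a translated cone, I would prove that $U$ has full column rank $n+1$: kernel vectors $(w_0, w')$ with $w_0 = 0$ are ruled out because the facet normals $\{f_i, g_j\}$ span $\RR^n$ (from full-dimensionality combined with the no-lines hypothesis), while kernel vectors with $w_0 \neq 0$ force both $w'$ and $-w'$ to lie in $0^+C$, again contradicting no lines. Hence the row spaces of $S$ and $V$ coincide and $\vec{v}$ lies in the row space of $S$. To realize $S$ as a slack matrix of a polyhedral cone, I pass to the homogenization $\hat{C} \subset \RR^{n+1}$: the pair $U, V$ is a cone factorization of $S$ with respect to the valid inequalities $\alpha_i t - f_i(x) \geq 0$, $-g_j(x) \geq 0$ (rows of $U$) and the extreme rays $(1, c_j), (0, r_j)$ (columns of $V$) of $\hat{C}$, so $S$ is a slack matrix of the polyhedral cone $\hat{C}$ in the sense of Definition \ref{slacek}.

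For the converse, suppose $S = UV$ is a slack matrix of a polyhedral cone $K$, with the columns $v_1, \ldots, v_q$ of $V$ being extreme rays that generate $K$, and suppose a nonzero $\{0,1\}$-vector $\vec{v}$ lies in the row space of $S$. Writing $\vec{v}^T = w^T V$ for some vector $w$, the linear functional $\langle w, \cdot\rangle$ evaluates to $1$ on $\{v_j\}_{j \in J_1}$ and to $0$ on $\{v_j\}_{j \in J_0}$, where $J_1 := \{j : \vec{v}_j = 1\}$ and $J_0 := \{j : \vec{v}_j = 0\}$. Define the sliced polyhedron
\[C := K \cap \{x : \langle w, x\rangle = 1\}.\]
Its vertices are precisely $\{v_j\}_{j \in J_1}$ and the extreme rays of its recession cone $K \cap \{x : \langle w, x\rangle = 0\}$ are precisely $\{v_j\}_{j \in J_0}$; each row of $U$ restricts to a valid inequality on $C$ whose slack at every generator $v_j$ equals $S_{ij}$. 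Therefore $S$ is a slack matrix of the polyhedron $C$.

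The principal obstacle is the rank computation for $U$ in the forward direction, especially the case $w_0 \neq 0$, which requires a careful use of the no-lines hypothesis together with the assumption that $C$ is not a translated cone; the remaining case in which $C$ is itself a translated cone must be handled separately, locating a suitable nonzero $\{0,1\}$-vector through the homogeneous facet rows rather than through the vertex indicator. The assumption $\mathrm{rank}(S) \geq 2$ rules out the degenerate situation $\vec{v} = \vec{0}$, which would render the converse construction vacuous, and the remaining identifications between vertices and extreme directions of $C$ and the columns of $V$ then follow from standard convex analysis.
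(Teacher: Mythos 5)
Your overall architecture --- reduce to a full-dimensional, line-free polyhedron via Lemma \ref{slacksame}, read the vertex-indicator row off the factorization (\ref{slackmatrix}), push it into the row space of $S$ by a full-column-rank argument on $U$, and conversely dehomogenize a cone slack matrix by slicing where the recovered functional equals $1$ --- is exactly the generalization of \cite[Theorem 6]{Gouveia20132921} that the paper intends (the paper gives no proof of its own, only the pointer to that result and to the machinery around Theorem \ref{rankbound}). The converse direction as you sketch it is sound.

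There is, however, a concrete error in your rank computation for $U$, and it propagates into an unhandled case. A kernel vector $(w_0,w')$ with $w_0=1$ satisfies $\alpha_i-f_i(w')=0$ for all $i$ and $-g_j(w')=0$ for all $j$. Since $\alpha_i>0$ this gives $f_i(w')=\alpha_i>0$, so $w'$ is certainly \emph{not} in $0^+C$, and nothing forces $-w'$ into $0^+C$ either; your claim that both lie in the recession cone, contradicting the no-lines hypothesis, is simply false. What these equations actually say is that $w'$ is a point of $C$ at which every defining inequality is tight, and Theorem \ref{identify one extreme} then shows that $C$ is a translated convex cone with apex $w'$ --- this is precisely how the proof of Theorem \ref{rankbound} disposes of the same kernel vector, and it is the hypothesis ``$C$ is not a translated cone,'' not ``$C$ contains no lines,'' that closes this case. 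Consequently the translated-cone case is not a side remark but a genuine residual gap: there the apex column of $S$ is identically zero and the vertex indicator $(1,0,\dots,0)$ is typically \emph{not} in the row space (for $C=(1,1)+\RR_{+}^{2}$ one gets $S=\left(\begin{smallmatrix}0&1&0\\0&0&1\end{smallmatrix}\right)$), so you must actually exhibit a different nonzero $\{0,1\}$ vector; it exists because the direction columns form a slack matrix of the pointed cone $C-b$, whose row space is $n$-dimensional, but that requires an argument rather than the parenthetical you give. Finally, $\mathrm{rank}(S)\ge 2$ does not ``rule out'' the zero vector from the row space --- the zero vector always lies there; the point is that the hypothesis of the converse must be read as supplying a \emph{nonzero} $\{0,1\}$ vector, since slicing along a functional vanishing on all generators yields nothing.
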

%
%

In   \cite[Theorem 14]{Gouveia20132921},  \cite[Corollary 5]{Gillis2012} and  \cite[Lemma 3.1]{GRT2013}, they characterized
the rank of a slack matrix in terms of the dimension of  a polytope. When $C$ is a pointed polyhedral cone,  its dimension is equal to the rank of its slack matrix  \cite[Lemma 13]{Gouveia20132921}).  These  results can be extended to the case that  $C$ is  a polyhedron.

\begin{theorem}\label{rankbound}
Let $C\subset \R{n}$ be a $n$-dimensional  polyhedron containing no lines. If $C$ is not a translated convex cone, then  the rank of the  slack matrix $S$ is $n+1$.
\end{theorem}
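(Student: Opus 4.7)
The factorization $S = UV$ made visible in (\ref{slackmatrix}), extended to the three blocks $(\alpha_i, -f_i)$, $(0,-g_j)$, $(-\beta_l,-h_l)$, writes $S$ as the product of an $m\times(n+1)$ matrix $U$ and an $(n+1)\times(t+s)$ matrix $V$, which already gives $\operatorname{rank}(S)\le n+1$. My plan is to establish the reverse inequality by showing separately that $\operatorname{rank}(V)=n+1$ and $\operatorname{rank}(U)=n+1$, and then to note that whenever $V$ has full row rank $n+1$ the linear map $V\colon\R{t+s}\to\R{n+1}$ is surjective, so $\operatorname{rank}(UV)=\operatorname{rank}(U)$.

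For $V$: its columns are the augmented vectors $(1,c_j)$ (vertices) and $(0,r_k)$ (extreme directions). Because $\dim C = n$, Theorem \ref{noncomexp} supplies $n+1$ affinely independent points $p_0,\dots,p_n$ in $C$, each a convex combination of the $c_j$'s plus a nonnegative combination of the $r_k$'s. The corresponding vectors $(1,p_i)\in\R{n+1}$ therefore lie in the column span of $V$ and are linearly independent, forcing $\operatorname{rank}(V)=n+1$.

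For $U$: after dividing each row by $\alpha_i$ or $\beta_l$ (for the canonical slack matrix these scalings are all $1$) and cyclically permuting the first column to the last, the rows of $U$ become exactly the augmented rows $(l, b_l)$ with $l\in D_1\cup D_2\cup D_3$ and $b_l\in\{1,0,-1\}$ respectively; this uses the bijection between facets of $C$ and elements of $D_1\cup D_2\cup D_3$ under the canonical convention. Let $A$ be the coefficient part and $b$ the right-hand side. Because $C$ contains no lines, $D_1\cup D_2\cup D_3$ spans $\R{n}$: a nonzero vector $v$ orthogonal to it would, by Theorem \ref{identify points in C}, satisfy $x\pm v\in C$ for every $x\in C$ and so give $\pm v\in 0^+C$, producing a line. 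Hence $\operatorname{rank}(A)=n$. The assumption that $C$ is not a translated cone lets us invoke the contrapositive of Theorem \ref{identify one extreme}: the system $Ax=b$ has no solution. Therefore $\operatorname{rank}([A\mid b])>\operatorname{rank}(A)=n$, and since $[A\mid b]$ has only $n+1$ columns we get $\operatorname{rank}(U)=\operatorname{rank}([A\mid b])=n+1$.

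Combining the two, $V$ is surjective onto $\R{n+1}$, so $\operatorname{rank}(S)=\operatorname{rank}(UV)=\operatorname{rank}(U)=n+1$. The main subtlety I anticipate is the bookkeeping in the second step: one must carefully identify the rows of the canonical $U$ (up to sign and column permutation) with the augmented coefficient/RHS matrix of the system featured in Theorem \ref{identify one extreme}, and verify that every single hypothesis of the theorem ($n$-dimensional, no lines, not a translated cone) is actually used exactly once—$n$-dimensional for the existence of $n+1$ affinely independent points, no lines for $\operatorname{rank}(A)=n$, and not a translated cone for $\operatorname{rank}([A\mid b])>n$. The rest of the argument is routine rank arithmetic.
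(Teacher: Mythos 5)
Your proof is correct and follows essentially the same route as the paper: both factor $S=UV$ as in (\ref{slackmatrix}), get $\operatorname{rank}(V)=n+1$ from $\dim C=n$, and get $\operatorname{rank}(U)=n+1$ by invoking Theorem \ref{identify one extreme} through the ``not a translated cone'' hypothesis. The only cosmetic difference is that you certify the full rank of the coefficient block by noting that a vector orthogonal to $D_1\cup D_2\cup D_3$ would produce a line in $C$ (via Theorem \ref{identify points in C}), whereas the paper argues that $\cp$ has nonempty interior; both are valid.
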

\begin{proof}
Suppose $C$ is not a translated convex cone.  Since  the rank of its  slack matrix does not change after the translation of $C$ and all the slack matrices of $C$ have the same rank, we can assume that $C$ contains the origin and  its canonical slack matrix  can be written as (\ref{slackmatrix}).
We  show that   the matrix $U$ is of full  column rank.  Otherwise,
 there exists  a vector $\left(\begin{array}{l}x_1\\ x_2 \end{array}\right)$ such that $U \cdot \left(\begin{array}{l}x_1\\ x_2 \end{array}\right)=0$.
 If $x_1\neq 0$, set  $x_1=1$.
 Then  $1-f(x_2)=0$ for all $f\in D_1$ and  $g(x_2)=0$ for all $g\in D_2$. By Theorem \ref{identify one extreme}, $C$ is a translated convex cone. This leads to a contradiction.  If $x_1=0$, since for every vector $y\in \cp$, there exist $\lambda_i^1\geq0,\  1\leq i\leq k_1$ and $\lambda_j^2\geq0,\  1\leq j \leq k_2$  such that $y=\sum\limits_{i=1}^{k_1}\lambda_i^1 f_i+\sum\limits_{j=1}^{k_2}\lambda_j^2 g_j$ for $f_i\in D_1$ and $g_j \in D_2$, we have $\langle x_2,y\rangle=0$. Since $\dim(\cp)=n$,  $\cp$ contains an interior. We derive   $x_2=0$ since $\langle y,x_2\rangle=0$ for each $y$ in $\cp$. Hence, $U$ is a full column rank matrix.  Moreover, since $C$ is a $n$-dimensional polyhedron, the dimension of the cone in $\R{n+1}$ generated by vectors $\left(\begin{array}{l}1\\ c_1 \end{array}\right),\ldots, \left(\begin{array}{l}1\\ c_t \end{array}\right), \left(\begin{array}{l}0\\ r_1 \end{array}\right),  \ldots, \left(\begin{array}{l}0\\ r_s \end{array}\right)$ is $n+1$. Hence, the matrix $V$ has rank $n+1$.  Therefore, the rank of $S$ is $n+1$.
\end{proof}

\begin{corollary}\label{rankthree}
Let $C$ be a polyhedron such that $C=\pointed+\linc$ where $\linc$ is the lineality space of $C$ and $\pointed=C\cap \linco$. If $C$ is not a translated convex cone, then  the rank of its  slack matrix $S$ is $\text{dim}(\pointed)+1$.
\end{corollary}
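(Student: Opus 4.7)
The plan is to reduce the corollary directly to Theorem \ref{rankbound} by applying the slack-matrix preserving reduction given in Lemma \ref{slacksame}. Since both the dimension of $C_0$ and the property of being a translated convex cone are intrinsic to the polyhedron (independent of the embedding space), one should be able to view the slack matrix $S$ of $C$ as the slack matrix of $C_0$ considered inside its own affine hull, where $C_0$ is full dimensional and contains no lines, and then conclude using Theorem \ref{rankbound}.

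First I would observe that by Lemma \ref{slacksame}, the given slack matrix $S$ of $C$ is also a slack matrix of a full dimensional polyhedron which contains no lines. Unwrapping the construction in the proof of Lemma \ref{slacksame}, this full dimensional polyhedron is exactly $C_0$ interpreted as a subset of $\aff(C_0)$ (so of ambient dimension $\dim(C_0)$): the transformation $U \mapsto U\bigl(\begin{smallmatrix}1 & 0\\ 0 & Q\end{smallmatrix}\bigr)$ and $V \mapsto \bigl(\begin{smallmatrix}1 & 0\\ 0 & Q^T\end{smallmatrix}\bigr) V$ does not alter the matrix product $S = U V$ but simply expresses $C_0$ in coordinates adapted to $\linco$ (and then to $\aff(C_0)$ if $C_0$ is not full dimensional in $\linco$).

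Next I would verify that the reduced polyhedron $C_0 \subset \aff(C_0)$ is not a translated convex cone. This is the only substantive step. If, for contradiction, $C_0 = x_0 + K$ for some convex cone $K$, then
\[
C \;=\; C_0 + \linc \;=\; x_0 + (K + \linc),
\]
and $K + \linc$ is again a convex cone (as the Minkowski sum of two convex cones). Hence $C$ would be a translated convex cone, contradicting the hypothesis. Therefore $C_0$ is not a translated convex cone, and this property is intrinsic so it also holds when $C_0$ is regarded inside $\aff(C_0)$.

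Finally, applying Theorem \ref{rankbound} to $C_0$ viewed as a $\dim(C_0)$-dimensional polyhedron in $\aff(C_0)\cong \R{\dim(C_0)}$, which is full dimensional, contains no lines, and is not a translated convex cone, yields that the rank of its slack matrix is $\dim(C_0) + 1$. Since this slack matrix is exactly $S$, we conclude $\rank(S) = \dim(C_0) + 1$. The only step that requires genuine care is the implication ``$C_0$ translated cone $\Rightarrow$ $C$ translated cone''; the rest is a bookkeeping application of Lemma \ref{slacksame} and Theorem \ref{rankbound}.
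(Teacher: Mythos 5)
Your proof is correct and follows exactly the route the paper intends (the paper states the corollary without proof, but its placement after Lemma \ref{slacksame} and Theorem \ref{rankbound} makes the reduction you describe the evident argument). You also correctly isolate and settle the one non-trivial point, namely that $C_0 = x_0 + K$ would force $C = x_0 + (K + \linc)$ to be a translated cone, contradicting the hypothesis.
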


In \cite[Theorem 3.2]{Shitov2014}, they  gave an upper bound $\lceil 6\min\{m,n\}/7 \rceil$ of the nonnegative rank for a rank-three nonnegative matrix in $ \R{m\times n}$. By Theorem \ref{rankbound} and  Corollary \ref{rankthree},
  the  slack matrix $S\in \R{m\times n}$ of every polyhedron in $\R{2}$ is rank-three except that the polyhedron is a translated convex cone. It is easy to show that when $\min(m,n)\geq 7$, every  such slack matrix has a nontrivial nonnegative factorization. This fact motivates us to   compute a  $\mathbb{R}^6_+$-factorization of the slack matrix $S \in \R{8\times 8}$ in  Example \ref{noncomexample}.

\bigskip
\noindent {\bf Acknowledgement} Chu
 Wang and Lihong Zhi are partially supported
by National Key Basic Research
Project  NKBRPC 2011CB302400, and the Chinese National Natural Science
Foundation under grant NSFC 91118001, 60911130369. This work is supported by National Institute for Mathematical
Sciences 2014 Thematic Program on Applied Algebraic Geometry in Daejeon,
South Korea.

\def\refname{\Large\bfseries References}
\bibliographystyle{plain}

\end{document}